\newcommand{\tomega}{\widetilde{\omega}}
\newcommand{\PP}{\mathbb{P}}
\newcommand{\NN}{\mathbb{N}}
\newcommand{\ZZ}{\mathbb{Z}}
\newcommand{\RR}{\mathbb{R}}
\newcommand{\CC}{\mathbb{C}}
\newcommand{\FF}{\mathbb{F}}
\newcommand{\CP}{\mathbb{CP}}
\newcommand{\tDDD}{\widetilde{\mathscr{D}}}
\newcommand{\WWW}{\mathscr{W}}
\newcommand{\bfl}{\mathbf{l}}
\newcommand{\Pfk}{\mathfrak{P}}
\newcommand{\mfk}{\mathfrak{m}}
\newcommand{\pfk}{\mathfrak{p}}
\newcommand{\tfk}{\mathfrak{t}}
\newcommand{\bfk}{\mathfrak{b}}
\newcommand{\sfk}{\mathfrak{s}}
\newcommand{\sfb}{\mathsf{b}}
\newcommand{\sfe}{\mathsf{e}}
\newcommand{\sff}{\mathsf{f}}
\newcommand{\sft}{\mathsf{t}}
\newcommand{\sfs}{\mathsf{s}}
\newcommand{\sfw}{\mathsf{w}}
\newcommand{\sfB}{\mathsf{B}}
\newcommand{\sfT}{\mathsf{T}}
\newcommand{\sfW}{\mathsf{W}}
\newcommand{\sfS}{\mathsf{S}}
\renewcommand{\div}{\mathrm{div}}
\newcommand{\codeg}{\mathrm{codeg}}
\newcommand{\Area}{\mathrm{Area}}
\newcommand{\D}{\mathcal{D}}
\newcommand{\tD}{\widetilde{\mathcal{D}}}
\newcommand{\E}{\mathcal{E}}
\newcommand{\tE}{\widetilde{\mathcal{E}}}
\renewcommand{\P}{\mathcal{P}}
\renewcommand{\L}{\mathcal{L}}
\renewcommand{\S}{\mathcal{S}}
\renewcommand{\O}{\mathcal{O}}
\newcommand{\dd}{\mathrm{d}}
\newcommand{\dsum}{\displaystyle\sum}
\newcommand{\dprod}{\displaystyle\prod}
\newcommand{\gmax}{g_{\max}}
\renewcommand{\leq}{\leqslant}
\renewcommand{\geq}{\geqslant}
\renewcommand{\tilde}{\widetilde}
\renewcommand{\bar}{\overline}
\newcommand{\ie}{i.e. }
\newcommand{\eg}{e.g. }
\def\floor (#1) at (#2,#3); {
    \node[draw,ellipse, minimum width=1cm, minimum height = 0.6 cm] (#1) at (#2,#3) {$\bullet$} ;
}
\def\ufloor (#1) at (#2,#3) (#4); {
    \node[draw,ellipse, minimum width=1cm, minimum height = 0.6 cm] (#1) at (#2,#3) {\scriptsize #4} ;
}
\def\marked (#1) to (#2) pos=#3 in=#4 out=#5; {
   \draw (#1) to[out=#5,in=#4] node[pos=#3] {$\bullet$} (#2) ;
}
\def\dashedmarked (#1) to (#2) pos=#3 in=#4 out=#5; {
   \draw[dashed] (#1) to[out=#5,in=#4] node[pos=#3] {$\bullet$} (#2) ;
}
\def\leftmarked (#1) to (#2) pos=#3 in=#4 out=#5 w=#6; {
   \draw[line width=2pt] (#1) to[out=#5,in=#4] node[pos=#3] {$\bullet$} node[midway,left] {$#6$} (#2) ;
}
\def\wlmarked (#1) to (#2) pos=#3 in=#4 out=#5 w=#6; {
   \draw (#1) to[out=#5,in=#4] node[pos=#3] {$\bullet$} node[midway,left] {$#6$} (#2) ;
}
\def\rightmarked (#1) to (#2) pos=#3 in=#4 out=#5 w=#6; {
   \draw[line width=2pt] (#1) to[out=#5,in=#4] node[pos=#3] {$\bullet$} node[midway,right] {$#6$} (#2) ;
}
\def\doublemarked (#1) to (#2) pos=#3 in=#4 out=#5; {
   \draw[line width=2pt] (#1) to[out=#5,in=#4] node[pos=#3] {$\bullet$} (#2) ;
}
\newcommand{\modulo}{\ \mathrm{mod}\ }
\newcommand{\ang}[1]{\langle #1\rangle}
\renewcommand{\bot}{\mathrm{bot}}
\renewcommand{\top}{\mathrm{top}}
\newcommand{\Eff}{\mathrm{DAmp}}
\newcommand{\bino}[2]{\begin{pmatrix} #1 \\ #2 \\ \end{pmatrix}}
\newcommand{\sbino}[2]{\left(\begin{smallmatrix} #1 \\ #2 \\ \end{smallmatrix}\right)}
\newtheorem{theo}{Theorem}[section]
\newtheorem*{theom}{Theorem}
\newtheorem{prop}[theo]{Proposition}
\newtheorem{lem}[theo]{Lemma}
\newtheorem{conj}{Conjecture}[section]
\theoremstyle{definition}
\newtheorem{defi}[theo]{Definition}
\theoremstyle{remark}
\newtheorem{remark}[theo]{Remark}
\newenvironment{rem}[1]{
    \begin{remark}#1}{
    \xqed{\blacklozenge}\end{remark}
}
\theoremstyle{remark}
\newtheorem{example}[theo]{Example}
\newenvironment{expl}[1]{
    \begin{example}#1}{
    \xqed{\lozenge}\end{example}
}
\newcommand{\xqed}[1]{
    \leavevmode\unskip\penalty9999 \hbox{}\nobreak\hfill
    \quad\hbox{\ensuremath{#1}}}
\newcommand{\nocontentsline}[3]{}
\newcommand{\tocless}[2]{\bgroup\let\addcontentsline=\nocontentsline#1{#2}\egroup}
\def\l@subsection{\@tocline{2}{0pt}{2.5pc}{5pc}{}}
\renewcommand{\l@section}{\@tocline{1}{0pt}{10pt}{1pc}{\bfseries}}
\title[Asymptotic computations of tropical refined invariants]{Asymptotic computations of tropical refined invariants in genus 0 and 1} 
\author{Thomas Blomme, Gurvan Mével}
\address{Université de Neuchâtel, rue \'Emile Argan 11, Neuchâtel 2000, Suisse}
\email{thomas.blomme@unine.ch}
\address{CNRS \& Nantes Université, UMR 6629 Laboratoire de Mathématiques Jean Leray, 2 rue de la Houssinière, F-44322 Nantes Cedex 3, France}
\email{gurvan.mevel@univ-nantes.fr}
\subjclass[2020]{Primary 14T15, 14T90, 05A15; Secondary 14N10}
\keywords{Tropical refined invariants, floor diagrams, generating series, asymptotic behavior}
\begin{document}

\begin{abstract}
Block and Göttsche introduced a Laurent polynomial multiplicity to count tropical curves. Itenberg and Mikhalkin then showed that this multiplicity leads to invariant counts called tropical refined invariants. Recently, Brugallé and Jaramillo-Puentes studied the polynomiality properties of the coefficients of these invariants and showed that for fixed genus $g$, the coefficients ultimately coincide with polynomials in the homology class of the curves we look at. We call the generating series of these polynomials asymptotic refined invariant. In genus 0, the asymptotic refined invariant has been computed by the second author in the $h$-transverse case.

In this paper, we give a new proof of the formula for the asymptotic refined invariant for $g=0$ using variations on the floor diagram algorithm. This technique allows also to compute the asymptotic refined invariant for $g=1$. The result exhibits surprising regularity properties related to the generating series of partition numbers and quasi-modular forms.
\end{abstract}

\maketitle

\tableofcontents

\section{Introduction}

\subsection{Context}

\subsubsection{Enumerative invariants and polynomiality.}

Given some points in the complex plane, the problem of determining how many curves of fixed degree and genus pass through these points is a well-known question, which generalizes to other surfaces. Given a non-singular complex surface $X$ equipped with a sufficiently ample line bundle $\L$, curves on $X$ may be obtained as zero-sets of sections of $\L$. For a non-negative integer $g$, how many such curves of genus $g$ pass through $c_1(X)\cdot\L-1+g$ points on $X$ ? We denote by $N_g^X(\L)$ this number, which is also known as the degree of the corresponding Severi variety. As a consequence of the adjunction formula, we could consider a dual question: given $\delta$ a number of nodes, what is the number $N_X^\delta(\L)$ of $\delta$-nodal curves passing through the appropriate number of points on $X$ ?

Although determining these numbers is a difficult problem, some recursive formulas have been proved in the 90's by Kontsevich \cite{kontsevich1994gromov-witten} in the specific case of rational curves, and by Caporaso-Harris \cite{caporaso1998counting}. In the same decade, Göttsche conjectured in \cite{gottsche1998conjectural} the number $N_X^\delta(\L)$ to behave polynomially when $X,\delta$ are fixed and $\L$ varies. This conjecture has first been proved by Tzeng \cite{tzeng2012proof}, then also by Kool, Shende and Thomas \cite{kool2011short}, and is as follows: for any $\delta$, there exists a universal polynomial $P_\delta \in \CC[x,y,z,t]$ such that for any non-singular complex algebraic surface with a sufficiently ample line bundle $\L$, one has
\[N_X^\delta(\L) = P_\delta(\L^2,c_1(X)\cdot\L,c_1(X)^2,c_2(X)).\]
Göttsche conjecture also states that the generating series of the $(P_\delta)_\delta$ is multiplicative, in that there exist formal series $A_1,\dots,A_4$ such that the generating series is $A_1^x A_2^y A_3^z A_4^t$, with explicit descriptions of some of the $A_i$.

\medskip

This polynomial behavior is not satisfied when we fix the genus $g$. For instance, Di Francesco and Itzykson \cite{di_francesco_quantum_1995} proved that $\log(N^{\CP^2}_0(d)) \sim 3d\log(d)$. However, Brugallé and Jaramillo-Puentes showed in \cite{brugalle2020polynomiality} that we recover polynomiality when looking instead at the coefficients of the \textit{tropical refined invariant}.

\subsubsection{Tropical refined invariants and their asymptotic}

Tropical refined invariants emanate from Mikhalkin's correspondence theorem \cite{mikhalkin2005enumerative} which enables the computation of $N_g^X(\L)$ for toric surfaces, transforming the previous algebraic problem into a combinatorial enumerative problem dealing with objects called \textit{tropical curves}. The correspondence theorem assigns some integer multiplicities to tropical curves. Block and G\"ottsche \cite{block2016refined} proposed to refine this multiplicity, yielding instead a Laurent polynomial (in a formal variable $q$) count of tropical curves, which interpolates between Gromov-Witten invariants for $q=1$ and tropical Welshinger invariants for $q=-1$. Itenberg and Mikhalkin \cite{itenberg2013block} proved that the enumeration using this new refined multiplicity indeed provides an invariant, known as tropical refined invariant and denoted by $BG^X_g(\L)(q)$.

In \cite{brugalle2020polynomiality}, Brugall\'e and Jaramillo-Puentes showed, in the case of Hirzebruch surfaces and (weighted) projective spaces, that for any fixed $i$ the coefficient of codegree $i$ of the tropical refined invariant is polynomial, providing that the line bundle is sufficiently ample with respect to $i$. Results of \cite{brugalle2020polynomiality} have been extended in genus 0 to any \textit{$h$-transverse} toric surface, including singular ones, by the second author \cite{mevel2023universal}. In sight of the multiplicativity part of G\"ottsche conjecture, \cite{mevel2023universal} also provides explicit formula for the generating series of the polynomials that give the coefficients of fixed codegree. In this paper, we provide a new proof for the generating series of the polynomials in the genus $0$ case, as well as formulas for the genus $1$ case, that may cast a new mystery toward the nature of tropical refined invariants.

\subsubsection{Interpretations and applications of tropical refined invariants}

Due to their original combinatorial definition, the meaning of tropical refined invariants in classical geometry remained mysterious for quite some time. Up to now, two main interpretations have been proved.
    \begin{itemize}
        \item Mikhalkin showed \cite{mikhalkin2017quantum} that in some situations, genus $0$ tropical refined invariants correspond to refined counts of real oriented curves according to the value of a so-called \textit{quantum index}.
        \item Bousseau proved \cite{bousseau2019tropical} that through the change of variable $q=e^{i\hbar}$, tropical refined invariants actually compute the generating series of (log-)Gromov Witten invariants with a $\lambda$-class. 
    \end{itemize}
Since then, results from \cite{mikhalkin2017quantum} have been generalized to genus $1$ and $2$ by Itenberg and Shustin \cite{itenberg2023real}, leading to real refined invariants. Unfortunately, the correspondence theorem does not relate them to tropical refined invariants defined using the Block-G\"ottsche multiplicity. Moreover, the tropical refined invariants involved in \cite{mikhalkin2017quantum} are of a different kind from the ones considered in the present paper, since its enumerative problem involves \textit{boundary constraints}. Although results from \cite{brugalle2020polynomiality} and the present paper do not apply to the invariants from \cite{mikhalkin2017quantum}, computed in \cite{blomme2019caporaso}, it would be interesting to know if the asymptotic results are true in this boundary setting, which would yield asymptotic information on the real invariants.

\medskip

Using \cite{bousseau2019tropical}, the results from \cite{brugalle2020polynomiality, mevel2023universal} may be interpreted as a subtle asymptotic statement about the (log-)GW invariants with a $\lambda$-class. The subtlety is due to the change of variable going from tropical refined invariants to these GW-invariants: $q=e^{i\hbar}$, \ie $q^{m/2}-q^{-m/2}=2i\sin\left(\frac{m\hbar}{2}\right)$, so that our results and those from \cite{brugalle2020polynomiality, mevel2023universal} give an asymptotic on the Fourier coefficients of the generating series of the GW-invariants with a $\lambda$-class. Furthermore, given that $q=e^{i\hbar}$ is also the change of variable occurring when relating GW-invariants and some Donaldson-Thomas invariants, there is also a possibility that the asymptotic of tropical refined invariants is actually a shadow from a property of some DT-invariants.

\medskip

Finally, in \cite{gottsche2014refined}, the refinement of tropical invariants is conjectured to correspond to the refinement of the Euler characteristic by the Hirzebruch genus $\chi_{-y}$ for some relative Hilbert scheme. Some work in this direction has been accomplished in \cite{nicaise2018tropical}. If such a correspondence was true, the asymptotic results from \cite{brugalle2020polynomiality, mevel2023universal} and the present paper may also be interpreted as asymptotic statements on the $\chi_{-y}$-genus of the relative Hilbert schemes.

    \subsection{Overview of results}

In this paper, we pursue the study of tropical refined invariants that was started in \cite{brugalle2020polynomiality} and \cite{mevel2023universal}, expanding the range of tools and possible computations. We also state some conjectures.

For the surface $X$, let $AR_{g,i}^X(\beta)$ be the asymptotic polynomial from \cite{brugalle2020polynomiality} giving the codegree $i$ coefficient of the genus $g$ refined invariant obtained by counting curves in the class $\beta$. Our results consist in an explicit computation of the $AR_{g,i}^X(\beta)$ in few particular cases, by determining their generating series in $i$ or $g$.

It is known from \cite{itenberg2013block} that the leading coefficient of the genus $g$ tropical refined invariant is $AR^X_{g,0} = \binom{\gmax}{g}$, where $\gmax$ is the genus of a non-singular curve in the class $\beta$ ; by the adjunction formula one has  $g_{\max}=1+\frac{1}{2}(\beta^2+\beta\cdot K_X)$. In other words, one has 
\[ \sum_{g\geq0} AR^X_{g,0} u^g = (1+u)^{\gmax}.\]
In this paper we provide a formula for the second term of the refined invariants.

\begin{theom}[{\bf \ref{theo-ARcodeg1}}]
For $X$ a smooth toric surface associated to a $h$-transverse and horizontal polygon, one has:
\[\dsum_{g\geq 0} AR_{g,1}^X u^g =  (1+u)^{\gmax} \left[-\beta^2 \dfrac{u^3}{(1+u)^3} +2(K_X\cdot\beta)\dfrac{u^2}{(1+u)^3} + \chi\dfrac{1}{1+u} - K_X^2 \dfrac{u}{(1+u)^3} \right] .\]
\end{theom}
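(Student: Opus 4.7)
The plan is to use the floor diagram algorithm adapted to $h$-transverse horizontal polygons, along the lines of \cite{mevel2023universal}, to express $BG_g^X(\beta)(q)$ as a weighted enumeration of marked floor diagrams, and then to extract the coefficient of codegree $1$ and sum the result over $g \geq 0$. Each floor diagram $D$ contributes a refined multiplicity $\mu_q(D)$ that factors as a product of $q$-integers $[w]_q = q^{(w-1)/2} + q^{(w-3)/2} + \cdots + q^{-(w-1)/2}$ attached to its elevators. After a global normalization, the codegree-$0$ coefficient of $\mu_q(D)$ equals $1$, while the codegree-$1$ coefficient equals the number of ``heavy'' elevators of $D$, meaning those of weight $\geq 2$; the asymptotic formula for $AR_{g,1}^X$ therefore reduces to a weighted count of pairs $(D,e)$ with $D$ a floor diagram of genus $g$ and $e$ a heavy elevator of $D$.

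The key combinatorial step is to interpret the known identity $\sum_{g \geq 0} \binom{\gmax}{g} u^g = (1+u)^{\gmax}$ bijectively: among the $\gmax$ cycles of a maximal-genus floor diagram, each independently contributes a factor $(1+u)$ according to whether it is kept or broken when passing from $\gmax$ down to $g$. Marking a heavy elevator constrains the states of the cycles in its vicinity: if it sits inside a local configuration involving $k$ cycles, these cycles are forced into a restricted subset of states, contributing a factor of the form $u^j(1+u)^{\gmax-k}$ in the generating series. Dividing through by $(1+u)^{\gmax}$ produces a rational function $u^j/(1+u)^k$, which matches the overall shape of the right-hand side.

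The next step is to identify the four terms of the theorem with four types of codegree-$1$ defects. Heavy elevators associated with interior lattice points of the polygon should arrange into pairs, producing a quadratic contribution in $\beta$ with denominator $(1+u)^3$, corresponding to $-\beta^2 \cdot u^3/(1+u)^3$; boundary contributions, linear in the perimeter of the polygon and hence in $K_X \cdot \beta$, should produce $2(K_X \cdot \beta) \cdot u^2/(1+u)^3$; global diagram-level corrections (counted by vertex-type quantities, related to $\chi(X)$) should yield the $\chi/(1+u)$ term; and residual self-intersection corrections, after passing from lattice combinatorics to intersection numbers via Pick's formula and the adjunction identity $2\gmax - 2 = \beta^2 + K_X \cdot \beta$, should yield $-K_X^2 \cdot u/(1+u)^3$.

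The main obstacle will be the exhaustive classification of codegree-$1$ contributions and their clean repackaging as topological invariants. Beyond heavy elevators in a fixed maximal diagram, one must handle ``degenerate'' diagrams with merged or split floors, or with floors of unexpected multiplicity; these carry their own codegree-$1$ contributions that cannot be ignored. A secondary technical challenge is verifying the binomial identities $\sum_g \binom{\gmax - k}{g-j} u^g = u^j(1+u)^{\gmax - k}$ and controlling small-$g$ edge effects, so that the combinatorial computation extends to the formal power series equality stated in the theorem.
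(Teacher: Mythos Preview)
Your proposal has the right high-level shape---enumerate codegree $0$ and $1$ contributions, then repackage via adjunction and Noether---but several of the concrete steps are either wrong or missing.

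First, your description of the codegree-$1$ coefficient of a single diagram is not correct. In the normalization the paper uses (denominators cleared, so the local factor at a bounded edge of weight $w$ is $(1-x^w)^2$), the degree-$1$ term of the multiplicity of a codegree-$0$ diagram is
\[
-(b^\top+b^\bot)\ -\ 2\,\big|\{e\in E^0(\D):w(e)=1\}\big|,
\]
coming from the ends and from the edges of weight exactly $1$. This is essentially the opposite of ``number of heavy elevators'', and the end contribution $-(b^\top+b^\bot)$ is not mentioned in your outline at all. Getting this wrong propagates through the whole computation.

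Second, and more seriously, you have not identified what the codegree-$1$ diagrams actually are. In the $h$-transverse setting they fall into three concrete families: (i) a source or sink attached one floor too high or too low (infinite side edge), (ii) a bounded edge bypassing a single floor (bounded side edge), and (iii) a single inversion in the sloping-pair data $L$ or $R$ at a non-horizontal corner. Each family has to be enumerated separately, together with the number of compatible markings, and the genus-$g$ count in each case is obtained by splitting the weights $\omega_m$ (or $\omega_m-1$) between consecutive floors into $g_m+1$ ordered positive parts. Your ``degenerate diagrams with merged or split floors'' is not the right picture, and your proposed one-to-one matching of the four terms $-\beta^2,\ 2K_X\cdot\beta,\ \chi,\ -K_X^2$ with four combinatorial types does not hold: in the actual computation the four topological quantities only appear after summing \emph{all} contributions and applying the relations $\sum_m(\omega_m-1)=g_{\max}$, $-K_X\cdot\beta=b^\top+b^\bot+2a$, $\beta^2=2g_{\max}-2+K_X\cdot\beta$, and $K_X^2+\chi=12$. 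The slope-inversion family, for instance, contributes $(\chi-4)(1+u)^{g_{\max}-1}$, which by itself does not look like any single term of the final formula.

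In short: the architecture is fine, but you need to (a) fix the degree-$1$ coefficient of a codegree-$0$ diagram, (b) explicitly list and count the three families of codegree-$1$ diagrams with their markings, and (c) only at the very end convert the resulting combinatorial expression into $\beta^2$, $K_X\cdot\beta$, $\chi$, $K_X^2$.
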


We then give a result concerning generating series when summing over $i$: let $AR_g^X(\beta)=\sum_{i=0}^\infty AR^X_{g,i}(\beta) x^i$ be this series, which we call the \textit{genus $g$ asymptotic refined invariant}.

In \cite{mevel2023universal}, the second author proved that $AR^X_0(\beta)= p(x)^\chi$, where $p(x)$ is the generating series of the partition numbers. However, the method used there does not seem to be manageable to deal with higher genus. With a slightly different point of view, we give in Theorem \ref{theo-AR-genus0-general} a different proof of this fact. The interest is that this point of view also allows to perform the computation in genus 1. A priori, the polynomiality behavior for general $h$-transverse toric surface has not been proven in \cite{brugalle2020polynomiality}, but in the genus 0 and 1 case it actually follows from our computations.

\begin{theom}[{\bf \ref{theo-AR-genus1-general}}]
For $X$ a non-singular toric surface associated to a $h$-transverse and horizontal polygon and with Euler characteristic $\chi$, the genus 1 asymptotic refined invariant satisfies

\[ AR^X_1(\beta) = p(x)^\chi\left( \gmax-12E_2(x) \right), \]
where $p(x)=\prod_{j=1}^\infty\frac{1}{1-x^j}$ is the generating series of the partition numbers and $E_2(x)=\sum_{a=1}^\infty \sigma_1(a)x^a$ is the first Eisenstein series.
\end{theom}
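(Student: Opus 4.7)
The plan is to adapt the floor-diagram proof of the genus $0$ formula $AR^X_0(\beta)=p(x)^\chi$ (Theorem~\ref{theo-AR-genus0-general}) to the genus $1$ setting, by treating the unique independent cycle of a genus $1$ floor diagram as a controlled perturbation of a genus $0$ one. Concretely, I would invoke the floor-diagram algorithm that expresses $BG^X_1(\beta)(q)$ as a weighted sum over connected marked floor diagrams $\D$ of genus $1$, each contributing $\mu_q(\D)\,\nu(\D)$, where $\mu_q(\D)=\prod_e [w_e]_q^2$ is the product of $q$-multiplicities over bounded edges and $\nu(\D)$ is the marking count. A connected genus $1$ diagram $\D$ decomposes as a spanning tree $\D_0$ together with one distinguished cycle edge $e_c$, and the idea is to stratify the sum according to the weight and combinatorial position of $e_c$.

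The second step is to split the refined multiplicity as $\mu_q(\D)=[w_{e_c}]_q^2\cdot \mu_q(\D_0)$ and to express $\nu(\D)$ as the marking count of $\D_0$ times a combinatorial factor $W(w_{e_c},\D_0,\beta)$ encoding where the cycle edge is inserted. Applying the genus $0$ result to $\D_0$ produces the asymptotic prefactor $p(x)^\chi$, and one then shows that the residual factor becomes independent of $\D_0$ in the polynomiality range, so that the generating series factors as
\[
AR^X_1(\beta) = p(x)^\chi \cdot C(x,\beta),
\]
with $C(x,\beta)=\sum_{w\geq 1}[w]_q^2\,W(w,\beta)$. The codegree expansion $[w]_q^2 = w^2 + \frac{w^2(w^2-1)}{12}\,x + O(x^2)$, combined with the fact that $W(w,\beta)$ counts admissible positions of a cycle edge of weight $w$ and is asymptotically proportional to a sum $\sum_{w\ell \leq N}1$ with $N$ linear in $\beta$, produces after resummation divisor sums $\sigma_1(n)=\sum_{d\mid n}d$ that assemble into the Eisenstein series $E_2(x)$. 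The constant term of $C(x,\beta)$ reproduces the known leading coefficient $\binom{\gmax}{1}=\gmax$ from \cite{itenberg2013block}, and the remaining terms collapse into $-12E_2(x)$.

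The main obstacle is the combinatorial accounting: one must classify the possible cycle topologies, namely a pair of parallel edges between two adjacent floors, cycles crossing several floors, and cycles internal to a single floor, verify that each of them contributes compatibly to the above factorization, and show that the summation over cycle weights and positions produces exactly $-12E_2(x)$ with no residual $\beta$-dependence beyond the factor $\gmax$. The surprising emergence of the quasi-modular form $E_2$ suggests that a subtle cancellation is at work between the different cycle topologies, and tracking it is the conceptual heart of the proof. As a by-product, the explicit computation establishes the polynomiality of $AR^X_{1,i}(\beta)$ for all smooth $h$-transverse horizontal $X$, which had not been proven in \cite{brugalle2020polynomiality}.
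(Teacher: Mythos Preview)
Your overall strategy matches the paper's: reduce the genus $1$ count to a weighted sum over genus $0$ floor diagrams by treating the unique non-tree edge as an insertion, then use the genus $0$ machinery to produce the factor $p(x)^\chi$. However, what you have written is a plan rather than a proof, and several of the steps you pass over quickly are exactly where the work lies.

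First, the decomposition ``$\D = \D_0 \cup e_c$'' is not well-defined as stated: a genus $1$ diagram with two parallel edges between consecutive floors does not have a canonical cycle edge, so your map $\D \mapsto \D_0$ is multivalued. The paper handles this by introducing \emph{nerved diagrams} (a choice of spanning tree with edge weights $\geqslant M$) and dividing the multiplicity by the number of nerves; without this or an equivalent device, your sum over $(\D_0,e_c)$ will overcount. Relatedly, removing $e_c$ does not simply give a genus $0$ diagram in the same class: you must redistribute its weight $w$ onto the nerve edges it bypasses, and check that the resulting weights stay large enough for the codegree bookkeeping to survive.

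Second, the assertion that ``the residual factor becomes independent of $\D_0$ in the polynomiality range'' is not correct. In the paper the contribution of inserting an edge between floors $m$ and $m+1$ is $\mathrm{pos}_m\cdot\bigl(\tfrac{\tilde\omega_m-1}{2}-d_m\bigr)\mu(\D_0)$ (Lemma~\ref{lem-function-to-integrate-to get-genus-1}), and both $\mathrm{pos}_m$ and $\tilde\omega_m$ genuinely depend on $\D_0$ through the leak of ends past floor $m$. The factorization $p(x)^\chi\cdot C(x,\beta)$ only emerges after one \emph{integrates} this function against the normalized measure on genus $0$ words; this is where the leak function $\varphi_m$ and the integrals of $e_m\psi_m$ enter, and it is those integrals that produce the terms $m\langle m\rangle$ and $\sum_{j\geqslant m}\langle j\rangle$ summing to $E_2(x)$.

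Third, the expansion $[w]_q^2 = w^2 + \tfrac{w^2(w^2-1)}{12}x + O(x^2)$ is the wrong object: after the change of variable and clearing of denominators the relevant factor is $(1-x^w)^2$, which is $\equiv 1$ modulo $x^{i+1}$ for the nerve edges and contributes $x^{w(s_++s_-)}(1-x^w)^2$ for the inserted edge. The $E_2$ does not come from a Taylor expansion of $[w]_q^2$, but from summing $\tfrac{x^m}{1-x^m}$, $\tfrac{x^m}{(1-x^m)^2}$ and $\sum_{j\geqslant m}\langle j\rangle$ over the floor index $m$ (Lemma~\ref{lem-computation-eisenstein-series}). Finally, ``cycles internal to a single floor'' do not occur in this floor-diagram model, so that case in your classification is spurious; the genuine case distinction is on how many floors the inserted edge skips above and below its marking, encoded by the pair $(s_+,s_-)$.
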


To prove the result we use floor diagrams, defined in \cite{brugalle2007enumeration,brugalle2008floor} and adapted in the refined setting in \cite{block2016fock}. We start with the case of Hirzebruch surfaces. It turns out most technicalities occur in the latter. It is then quite easy to obtain the result for other $h$-transverse toric surfaces by using the computation we did in the Hirzebruch case.

\subsection{Future directions and conjectures}

The form taken by the generating series for fixed genus suggests a nice but subtle regularity of the asymptotic of the refined invariants. However, we are for now limited by the computational techniques at our disposal, as the complexity of the computations increases quite fast with the genus or the codegree. We yet suggest a few possible generalizations that are either conjectures or potential results that the techniques hereby presented should be sufficient to prove for the brave reader but that we ultimately decided to leave out to avoid rendering this already quite technical paper even more technical.

First, although the results are stated for $h$-transverse polygons (especially the generating series for genus $0$ and $1$), we expect them to be true for all polygons yielding smooth toric surfaces. To get that, one way is to relate the refined invariants of a toric surface $X$ and its blow-up at a torus fixed point, using floor diagrams or equivalently a version of the tropical Caporaso-Harris formula \cite{gathmann2007caporaso} near the corner/side corresponding to the torus fixed point/exceptional divisor. Although the Caporaso-Harris formula \cite{gathmann2007caporaso} is wrong in a general toric surface, we expect the latter to be true for sufficiently ample divisor classes in finite codegree.

The method of computation of the present paper generalizes for $h$-transverse polygons having some singular corners (\ie the primitive direction vectors of the adjacent edges do not form an integral basis). Such corners lead to $A_n$-singularities in the corresponding toric surface. It is already proven in \cite{mevel2023universal} that for such toric varieties, the generating series for $g=0$ is actually $\prod_C p(x^{k_C})$, where the product runs over the corners $C$ of the polygon, and $k_C$ is the determinant of the adjacent primitive direction vectors at the corner. This generalizes the smooth case, where each of the $\chi$ corners has $k_C=1$. We also expect the expression $\prod_C p(x^{k_C})$ to hold for non-$h$-transverse toric surfaces with singular corners. We do not know the form of the series in the $g=1$ case.

With some time, the techniques presented should be able to provide a result for genus $2$ as well as codegree $2$, but at the cost of many lengthy computations. Given the above results, and that the result proven for $h$-transverse and non-singular toric surfaces should hold for any non-singular toric surface, we conjecture the following.

\begin{conj}\label{conj}
For non-singular surface $X$, the asymptotic refined invariant $AR^X_g$ has the following form:
$$AR^X_g(\beta)=p(x)^{\chi}\left( \bino{g_{\max}}{g}+Q^X_g(\beta^2,\beta\cdot K_X)\right),$$
where $Q^X_g$ is a polynomial of degree at most $g$ in $\beta^2$ and $\beta\cdot K_X$, whose coefficients are quasi-modular forms in the $x$ variable vanishing at $0$.
\end{conj}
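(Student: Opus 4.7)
The plan is to extend the floor-diagram analysis underlying Theorems \ref{theo-ARcodeg1} and \ref{theo-AR-genus1-general} by organizing genus-$g$ floor diagrams according to their cycle structure. Since the conjectured formula separates a ``trivial'' genus contribution $\binom{g_{\max}}{g}$ from a correction $Q^X_g$, and carries a background factor $p(x)^\chi$ already present in genus $0$ and $1$, it is natural to proceed inductively on $g$, isolating the contribution of diagrams that genuinely carry cycles.

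As a preliminary reduction, I would first establish the conjecture for smooth $h$-transverse toric surfaces, and then handle general smooth toric surfaces by a Caporaso--Harris type degeneration at a torus fixed point along the lines suggested in the paper. Each such degeneration produces an extra corner with local invariant $k_C = 1$ that should contribute one additional factor of $p(x)$ in the asymptotic, matching the updated value of $\chi$. Iterating reduces the conjecture to the smooth $h$-transverse case, where the floor-diagram machinery already developed in the paper applies directly.

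For the main step, I would enumerate refined floor diagrams of genus $g$ according to the number $k \leq g$ of independent cycles. A diagram of ``virtual'' genus $g$ with no actual cycles should, once summed over configurations, recover the leading $\binom{g_{\max}}{g}\cdot p(x)^\chi$ term via the same mechanism that produces $p(x)^\chi$ in genus $0$. A diagram carrying $k \geq 1$ cycles additionally involves, for each cycle, a choice of edge pairing weighted by a Laurent polynomial in $q$; summing over the edge weights yields divisor-sum generating series. In genus $1$ this mechanism produces the $-12E_2$ term, and in higher genus nested sums of divisor-type weights naturally assemble into quasi-modular forms, explaining both the structure of $Q^X_g$ and its vanishing at $x=0$ (each genuine cycle forces at least one positive power of $x$). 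The polynomial-degree bound in $(\beta^2,\beta\cdot K_X)$ would follow because placing a single cycle requires a choice linear in the length parameters of the diagram, themselves bounded by $\beta$; hence $g$ cycles contribute at most degree $g$ once everything is rewritten in terms of the topological invariants $\beta^2$, $\beta\cdot K_X$, $K_X^2$ and $\chi$.

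The main obstacle is proving quasi-modularity in full generality. Even though the appearance of $E_2$ in genus $1$ and of the partition series in genus $0$ is compelling, extracting quasi-modular forms from the raw floor-diagram combinatorics for arbitrary $g$ seems out of reach by direct computation. A more conceptual route would pass through the Bousseau correspondence $q = e^{i\hbar}$ of \cite{bousseau2019tropical}, reformulating the asymptotics as a statement about log-Gromov--Witten invariants with a $\lambda$-class, and then invoke a Bloch--Okounkov-type result identifying generating series of shifted symmetric functions on partitions with quasi-modular forms. Constructing an explicit dictionary between the refined floor-diagram cycle sums and such shifted symmetric functions---and doing so uniformly in $g$---is where the real difficulty lies and where a genuinely new structural input would be required.
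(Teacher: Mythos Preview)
The statement you are attempting to prove is labelled as a \emph{Conjecture} in the paper, not a theorem. The paper does not offer a proof; it presents the conjecture as an open problem, supported by the evidence of Theorems \ref{theo-AR-genus0-general}, \ref{theo-AR-genus1-general}, and \ref{theo-ARcodeg1} (the cases $g=0,1$ and $i=0,1$), together with heuristic remarks about quasi-modularity coming from abelian surfaces. There is therefore no ``paper's own proof'' to compare your proposal against.

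Your proposal is, as you yourself recognize, not a proof but a research outline. Several of the steps you describe are precisely the open problems the authors flag: the reduction from general smooth toric surfaces to the $h$-transverse case via a Caporaso--Harris type relation is explicitly mentioned in the paper as something they \emph{expect} to hold in finite codegree but have not established; the claim that ``nested sums of divisor-type weights naturally assemble into quasi-modular forms'' for arbitrary $g$ is exactly the content of the conjecture and is not justified by any argument you give; and you concede in your final paragraph that a ``genuinely new structural input would be required'' for the quasi-modularity. The degree bound argument (placing $g$ cycles contributes at most degree $g$ in $\beta^2,\beta\cdot K_X$) is plausible and roughly in line with how the paper bounds degrees in the genus-$1$ case, but it is not worked out. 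In short, what you have written is a reasonable sketch of how one might \emph{attack} the conjecture, consistent with the directions the authors themselves suggest, but it does not constitute a proof and the paper makes no claim that one exists.
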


As $g_{\max}$ is a polynomial function in $\beta^2$ and $\beta\cdot K_X$, so is $\sbino{g_{\max}}{g}$. The constant term of the refined invariant has already been computed in \cite{itenberg2013block} and is indeed $\sbino{g_{\max}}{g}$, so that the conjecture is true modulo $x$.

\medskip

The shape given in the conjecture emanates from computations in genus $0$ and $1$. To give more support to the quasi-modularity claim, we also have the following: in a future paper, we prove that the asymptotic refined invariants for Abelian surfaces is a polynomial in $\beta^2$ with coefficients in $\ZZ[\![x]\!]$ which are quasi-modular forms. This supports the mysterious appearance of quasi-modular forms in this setting. In the examples, the polynomial functions $AR^X_{g,i}(\beta)$ on the lattice $H_2(X,\ZZ)$ seem to be more precisely polynomials in $\beta^2$ and $\beta\cdot K_X$, justifying the form of the polynomial $Q^X_g$. We notice a more general conjecture would deal with the double generating series $\sum_{i,g}AR_{g,i}^Xu^gx^i$, and computations suggest to factor out $p(x)^\chi$ and $(1+u)^{\gmax}$. Theorems \ref{theo-ARcodeg1} and \ref{theo-AR-genus1-general} ensure that we have modulo $u^2$:
$$\sum_{i,g}AR_{g,i}^Xu^gx^i = p(x)^\chi(1+u)^{\gmax} \left[ 1-(\chi+K_X^2)uE_2(x)\right] \modulo u^2.$$

\medskip

Conjecture \ref{conj} treats the regularity of the asymptotic invariant for a fixed surface $X$. Similarly to the G\"ottsche conjecture, it would be interesting to study the dependence of $AR_g^X$ in the surface $X$. Hopefully, the polynomials $AR_{g,i}^X(\beta)$ are actually given by a universal polynomial $Q_{g,i}(\beta^2,\beta\cdot K_X,K_X^2,\chi(X))$. Theorems \ref{theo-ARcodeg1} and \ref{theo-AR-genus1-general} prove that it is the case for $g=0,1$ or $i=0,1$.

    \subsection{Organization of the paper}

    The precise setting of tropical refined invariants is recalled in Section \ref{sec-floor-diagrams-refined-invariants}. We also recall how to compute them with floor diagrams. Furthermore, we give a change of variables that transforms the symmetric Laurent polynomials into true polynomials in a new variable $x$, so that the codegree $i$ coefficient becomes the $x^i$ term. This allows for an easier formulation of the asymptotics. 

Section \ref{sec-series-fixed-codeg} is dedicated to the proof of Theorem \ref{theo-ARcodeg1}, which amounts to compute the generating series in the genus parameter.

Section \ref{sec-genus0} introduces \emph{words} that we will use in section \ref{sec-genus1}. We also prove Theorem \ref{theo-AR-genus0-general} using this tool to illustrate how it works.

Section \ref{sec-genus1} is devoted to the proof of Theorem \ref{theo-AR-genus1-general}. We start by explaining how to construct floor diagrams of genus $1$ from genus $0$ floor diagrams. This allows us to express the genus $1$ asymptotic refined invariant as a weighted sum over the genus $0$ floor diagrams. In both Sections \ref{sec-genus0} and \ref{sec-genus1} we start with the Hirzebruch case before going to the case of $h$-transverse non-singular toric surfaces.

We end with an appendix where we explain how to modify the calculations of this paper to deal with Göttsche-Schroeter invariants instead of Block-Göttsche invariants.

\subsection*{Acknowledgements} Part of the work was accomplished during the stays of T.B. in Nantes in March 2023 and of G.M. in Neuch\^atel in October 2023. We both would like to thank the other for the excellent working conditions.

T.B. is partially supported by the SNSF grant 204125. G.M. is supported by the CNRS and conducted his work within the France 2030 framework programme, Centre Henri Lebesgue ANR-11-LABX-0020-01.

\section{Floor diagrams and asymptotic refined invariants}
\label{sec-floor-diagrams-refined-invariants}

In this section, we recall the definition of tropical refined invariants and their computation using floor diagrams. We also reformulate the main result of \cite{brugalle2020polynomiality} to introduce \textit{asymptotic refined invariants}.

\subsection{Toric surfaces, homology classes and polygons.}

Let $N$ be a lattice and $M=\mathrm{Hom}(N,\ZZ)$ its dual. We denote by $M_\RR=M\otimes\RR$, $N_\RR=N\otimes\RR$ the associated real vector spaces. Following \cite{fulton1993toric}, a compact toric surface $X$ is obtained from a complete fan $\Sigma\subset N_\RR$, or from a polygon $\Delta \in M_\RR$ dual to $\Sigma$. The toric divisors of $X$ are in bijection with the rays of $\Sigma$. We assume that $X$ is smooth, \ie every cone of $\Delta$ is simplicial. The anticanonical class $-K_X$ is represented by the sum of toric divisors. The Euler characteristic $\chi$ of $X$ is equal to the number of rays of $\Sigma$.

Each complex curve in $X$ realizes some homology class $\beta\in H_2(X,\ZZ)$. Recall that $H_2(X,\ZZ)$ is endowed with the intersection product, which is non-degenerate by Poincar\'e duality. It is classical to show (see \cite{fulton1993toric}) that the homology group $H_2(X,\ZZ)$ is generated by the classes of the toric divisors in $X$. In particular, a class $\beta\in H_2(X,\ZZ)$ is fully determined by its intersection numbers with the toric divisors.

Consider the cone $\Eff(X)\subset H_2(X,\ZZ)$ of \emph{ample divisors classes} classes, i.e. classes $\beta$ such that $\beta\cdot D>0$ for any toric divisor $D$.

Let $\beta\in\Eff(X)$ be an ample divisor class. For any ray $\rho$ of the fan $\Sigma$, let $n_\rho \in N$ be a primitive vector such that $\rho = \RR_{\geqslant 0} \cdot n_\rho$, and let $D_\rho$ be the toric divisor corresponding to $\rho$. The multiset
\[ \mathrm{trop}(\beta)=\{n_\rho^{\beta\cdot D_\rho},\ \rho \text{ ray of } \Sigma \}, \]
where the notation $n_\rho^{\beta\cdot D_\rho}$ means that $n_\rho$ is taken $\beta\cdot D_\rho$ times, is called the \emph{tropical degree} of $\beta$. The sum of the vectors of the tropical degree is $0$ due to relations in $H_2(X,\beta)$ (see \cite{fulton1993toric}). Therefore, the tropical degree of $\beta$ determines a convex lattice polygon $\Delta_\beta\subset M_\RR$, with normal fan $\Sigma$, and such that the side dual to the ray $\rho$ has integer length $\beta \cdot D_\rho$.

\begin{rem}
If $\beta\in\Eff(X)$, the lattice polygon $\Delta_\beta$ gives an ample line bundle $\L_\beta$ on $X$ with Chern class $c_1(\L_\beta)\in H^2(X,\ZZ)$ Poincar\'e dual to $\beta$. A basis of sections of $\L_\beta$ is indexed by the lattice points of $\Delta_\beta$. For our purpose, we work with the class $\beta$ instead of the line bundle, as it would be the case in the setting of the G\"ottsche conjecture. These points of view are equivalent in the case of rational surfaces.
\end{rem}

\begin{expl}
    Take $\Sigma_{\PP^2}$ to be the complete fan in $\RR^2$ with three rays spanned by respectively $(0,-1),(-1,0)$ and $(1,1)$, giving as toric surface the projective plane $\PP^2$. Its second homology group $H_2(\PP^2,\ZZ)$ is isomorphic to $\ZZ$, spanned by the common class $L$ of any of the three toric divisors. The choice of $dL\in H_2(\PP^2,\ZZ) \simeq \ZZ $ yields the tropical degree
    $$\mathrm{trop}(dL)=\{ (0,-1)^d,(-1,0)^d,(1,1)^d\}.$$
    The associated polygon $\Delta_{dL}$ is $d$ times the unit triangle, which thus has vertices $(0,0),(d,0)$ and $(0,d)$ (see Figure \ref{fig-poly-CP2}), with associated line bundle $\O(d)$ on $\PP^2$.
\end{expl}

\begin{figure}[h!]
\begin{subfigure}[t]{0.49\textwidth}
\centering
	\begin{tikzpicture}[scale=0.7] 
		\draw (0,0) node {$\bullet$} ;
		\draw (0,0) node[below] {\scriptsize $(0,0)$} ;
		\draw (2,0) node {$\bullet$} ;
		\draw (2,0) node[below] {\scriptsize $(d,0)$} ;
		\draw (0,2) node {$\bullet$} ; 
		\draw (0,2) node[left] {\scriptsize $(0,d)$} ; 
		\draw (0,0) -- (2,0) -- (0,2) -- cycle ;
	\end{tikzpicture}
    \caption{$\Delta_{dL}$} 
    \label{fig-poly-CP2}
\end{subfigure}
\begin{subfigure}[t]{0.49\textwidth}
\centering
	\begin{tikzpicture}[scale=0.7] 
		\draw (0,0) node {$\bullet$} ;
		\draw (0,0) node[below] {\scriptsize $(0,0)$} ;
		\draw (1,0) node {$\bullet$} ;
		\draw (1,0) node[below] {\scriptsize $(b,0)$} ;
		\draw (5,2) node {$\bullet$} ; 
		\draw (5,2) node[right] {\scriptsize $(b+\delta a,a)$} ;
		\draw (0,2) node {$\bullet$} ; 
		\draw (0,2) node[left] {\scriptsize $(0,a)$} ; 
		\draw (0,0) -- (1,0) -- (5,2) -- (0,2) -- cycle ;
	\end{tikzpicture}
    \caption{$\Delta^\delta_{aE+bF}$} 
    \label{fig-poly-Hirzebruch}
\end{subfigure}
\caption{ }
\end{figure}

\begin{expl}
    Take $\Sigma_\delta$, with $\delta\geqslant 0$, to be the complete fan in $\RR^2$ with rays four spanned by respectively $(-1,0),(0,-1),(0,1)$ and $(1,-\delta)$. Let $F$, $E_\infty$ and $E=E_0$ be the classes of the divisors associated to the first three rays. They satisfy $E_0^2=\delta$, $F^2=0$ and $E_0\cdot F=1$. The toric surface associated to this fan is the Hirzebruch surface $\FF_\delta$, which has $H_2(\FF_\delta,\ZZ)\simeq\ZZ^2$. We have the relation 
    $$E_\infty=E_0-\delta F,$$
    so that $H_2(\FF_\delta,\ZZ)$ is spanned by $F$ and $E$. The toric divisor associated to the last ray lies also in the class $F$. Let $\beta=aE+bF\in\Eff(\FF_\delta)$ be an ample divisor class, which means that $a,b>0$. The associated tropical data is
    $$\mathrm{trop}(aE+bF)=\{ (0,1)^{b+\delta a},(0,-1)^{b},(-1,0)^a,(1,-\delta)^a \}.$$
    The corresponding polygon $\Delta^\delta_{aE+bF}$ is the trapezoid with vertices $(0,0)$, $(0,a)$, $(b,0)$ and $(b+\delta a,a)$, see Figure \ref{fig-poly-Hirzebruch}.
\end{expl}

\subsection{Floor diagrams and tropical refined invariants}

In \cite{itenberg2013block} it is shown that the count of genus $g$ tropical curves of degree $\mathrm{trop}(\beta)$ passing through a generic configuration of $-K_X\cdot\beta+g-1$ points with Block-G\"ottsche multiplicity does not depend on the choice of the points, yielding the tropical refined invariant $BG^X_g(\beta)(q)\in\ZZ[q^{\pm 1/2}]$. When $X$ comes from an $h$-transverse polygon, see the definition below, it is possible to compute them using the floor diagram algorithm from \cite{block2016fock}, which is the Block-G\"ottsche version of the floor diagram algorithm from \cite{brugalle2008floor}. This is the content of \cite[Theorem 4.3]{block2016refined}, stated below as Theorem \ref{theo-tropical-refined-invariant} that may be taken as a definition of $BG^X_{g,\beta}(q)$. We now recall how floor diagrams work.

\begin{defi}
    A convex lattice polygon $\Delta$ is said to be
    \begin{itemize}
        \item \emph{$h$-transverse} if any edge of $\Delta$ has a direction vector of the form $(\pm 1,0)$ or $(n,\pm 1)$ for some $n\in\ZZ$,
        \item \emph{horizontal} if it has a top and bottom horizontal edge,
        \item \emph{non-singular} if the associated toric surface is non-singular.
    \end{itemize}
Given $\Delta$ a lattice polygon we set the following notations.
\begin{itemize}
    \item The number of interior lattice points of $\Delta$ is $\gmax(\Delta) = |\mathring\Delta \cap\ZZ^2|$.
    \item The height of $\Delta$ is $a(\Delta)$. 
    \item The length of its top (resp. bottom) edge is $b^\top(\Delta)$ (resp. $b^\bot(\Delta)$); these may be 0 if $\Delta$ is not horizontal.
    \item $b_{\text{left}}(\Delta)$ (resp. $b_{\text{right}}(\Delta)$) is the multiset of integers $k$ appearing a number of times equal to the integral length of the side of $\Delta$ having $(-1,k)$ (resp. $(1,k)$) as outgoing normal vector.
\end{itemize}
When no ambiguity is possible we will simply use $\gmax$, $a$, $b^\top$, etc.
\end{defi}

For a toric surface associated to a $h$-transverse, horizontal and non-singular polygon, we have the following result.

\begin{lem}\label{lem-toric-surface-euler-char-self-intersection}
        Let $X$ be a smooth toric surface coming from an $h$-transverse, with a top and bottom horizontal sides corresponding to divisors $D_\top$ and $D_\bot$. We have
        $$D_\top^2+D_\bot^2+\chi(X)=4$$
        where $\chi(X)$ is the Euler characteristic of $X$. 
    \end{lem}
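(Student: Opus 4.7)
The plan is to express each of $D_\top^2$, $D_\bot^2$, and $\chi(X)$ as an elementary combinatorial invariant of the polygon $\Delta$ and then reduce the identity to a simple count. First, I would label the edges of $\Delta$: apart from the top and bottom horizontal edges, let the left side consist of $L$ edges with outward primitive normals $(-1,k_1^L),\dots,(-1,k_L^L)$ in order from top to bottom, and the right side consist of $R$ edges with outward primitive normals $(1,k_1^R),\dots,(1,k_R^R)$ in order from bottom to top. Since $\chi(X)$ equals the number of rays of $\Sigma$, we immediately have $\chi(X)=L+R+2$.

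Next I would compute the self-intersections using the standard smooth toric formula: if $v$ is a primitive ray generator with counterclockwise-adjacent generators $v^-$ and $v^+$, then the associated divisor satisfies $D^2=-a$ where $v^-+v^+=a\,v$. Applied to $D_\top$, whose two neighbors in the cyclic order of the fan are $(1,k_R^R)$ and $(-1,k_1^L)$, this yields $D_\top^2=-(k_R^R+k_1^L)$. Applied to $D_\bot$, with neighbors $(-1,k_L^L)$ and $(1,k_1^R)$, it yields $D_\bot^2=k_L^L+k_1^R$.

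The crucial structural input comes from smoothness along each side: two consecutive left primitive normals $(-1,k)$ and $(-1,k')$ span a smooth cone iff $|k-k'|=1$, and convexity of $\Delta$ forces the slopes to decrease monotonically from top to bottom. Hence $k_{i+1}^L=k_i^L-1$, so $k_L^L-k_1^L=-(L-1)$, and symmetrically $k_R^R-k_1^R=R-1$. Combining everything,
\[
D_\top^2+D_\bot^2=(k_L^L-k_1^L)+(k_1^R-k_R^R)=-(L-1)-(R-1)=2-L-R,
\]
and adding $\chi(X)=L+R+2$ gives $4$.

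The only real subtlety is orientation bookkeeping—checking that counterclockwise traversal of the polygon corresponds to counterclockwise rotation of outward normals in $\Sigma$, so that one correctly identifies which of the $k_i^L$ and $k_j^R$ are adjacent to $D_\top$ and $D_\bot$. Beyond this, the argument is a short formal manipulation, and no deeper input (for instance, Noether's formula $K_X^2+\chi(X)=12$) is needed.
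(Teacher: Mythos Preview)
Your proof is correct and takes a genuinely different route from the paper's. The paper invokes Noether's formula $K_X^2+\chi(X)=12$ and then exploits the fibration $X\to\PP^1$ given by the first coordinate: in the $h$-transverse case the fiber class $F$ is the sum of the left (or right) toric divisors with coefficient one, so $-K_X=D_\top+D_\bot+2F$, whence $K_X^2=D_\top^2+D_\bot^2+8$ and the identity follows. Your argument bypasses Noether entirely, computing $D_\top^2$ and $D_\bot^2$ directly from the toric relation $v^-+v^+=-(D^2)v$ and then using smoothness plus convexity along each side to force consecutive slopes to differ by exactly one, which makes the sum telescope.

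Your approach is more elementary and self-contained; the paper's approach, while shorter once Noether is granted, has the conceptual advantage of exhibiting the canonical class in terms of the fibration and explains why the quantity $12-\chi=K_X^2$ appears naturally later in the paper (see the proof of Theorem~\ref{theo-ARcodeg1}). You correctly flag the only delicate point, namely the orientation bookkeeping identifying which $k_i^L$ and $k_j^R$ are adjacent to the horizontal rays; your convention (left slopes decreasing top to bottom, right slopes increasing bottom to top) is consistent with counterclockwise traversal of the outward normals, so the computation goes through.
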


    \begin{proof}
        Using the Hodge numbers, the Euler characteristic of the sheaf $\O$ of holomorphic functions satisfies $\chi(\O)=1$. By Noether's formula, we know that $K_X^2+\chi(X)=12$. Moreover, as we have a top and bottom side, the toric surface is endowed with a map to $\PP^1$ provided by the first coordinate in the lattice of monomial. Let $F$ be the class of a fiber of this projection. The fiber over $0$ (resp. $\infty$) is the union of toric divisors coming from the left (resp. right) sides of the polygon. As divisor, it is a linear combination of the corresponding toric divisors. In the general case, the coefficients are the horizontal coordinates of the lattice vectors directing the corresponding rays in the fan. Since we are in the $h$-transverse case, the coefficients are $1$ so that
        \[ F=\sum_{D \text{ left side of }\Delta}D=\sum_{D \text{ right side of }\Delta}D.\]
        In particular, as the sum of all toric divisors is an anticanonical divisor, $D_\bot+D_\top+2F$ is an anticanonical divisor of $X$. As $D_\top\cdot D_\bot=0$, $F^2=0$, and $F\cdot D_{\top/\bot}=1$ we get
        \[ 12-\chi(X) = K_X^2 =  (D_\bot+D_\top+2F)^2  =  D_\bot^2+D_\top^2 + 4+4.  \]
    \end{proof}

An \emph{oriented graph} $\Gamma$ is a collection of vertices $V(\Gamma)$, bounded edges $E^0(\Gamma)$, sinks $E^\top(\Gamma)$ and sources $E^\bot(\Gamma)$. A bounded edge is a bivalent edge, \ie adjacent to two vertices. A sink (resp. source) is a univalent edge oriented outward (resp. inward) a vertex. The set of all edges is denoted by $E(\Gamma)$. A \emph{weight} on $\Gamma$ is an application $w : E(\Gamma) \to \ZZ_{>0}$. Every vertex $v$ has a \emph{divergence} which is the difference of the total weights entering and leaving the vertex, \ie  
\[ \div(v) = \dsum_{ \overset{e}{\to} v} w(e) - \dsum_{v \overset{e}{\to}} w(e).\]

\begin{defi}
    Let $\Delta$ be a $h$-transverse polygon. A \emph{floor diagram} $\D$ with Newton polygon $\Delta$ is the data of $(\Gamma,w,L,R)$, with $(\Gamma,w)$ a weigthed, connected, oriented and acyclic graph satisfying the following conditions :
    \begin{itemize}
        \item the graph $\Gamma$ has $a(\Delta)$ vertices called floors, $b^\top(\Delta)$ sinks and $b^\bot(\Delta)$ sources,
        \item all sinks and sources have weight $1$,
        \item the functions $L : V(\Gamma) \to b_{\text{left}}(\Delta)$ and $R : V(\Gamma) \to b_{\text{right}}(\Delta)$ are bijections such that for any vertex $v$ one has $\div(v) = R(v)+L(v)$.
    \end{itemize}    
    The \emph{genus} of the floor diagram $\D$ is the first Betti number of the underlying graph $\Gamma$. 
    We will often confuse $\D$ and $\Gamma$.
\end{defi}

\begin{rem}
    If $\Delta=\Delta_\beta$ for some $\beta \in H_2(X,\ZZ)$, a floor diagram of Newton polygon $\Delta_\beta$ is also said to have \emph{class} $\beta$. 
\end{rem}

Given a non-negative integer $n$, the quantum integer $[n]$ is the Laurent polynomial in $q^{1/2}$ defined by
\[ [n](q) = \dfrac{q^{n/2}-q^{-n/2}}{q^{1/2}-q^{-1/2}} \in \ZZ_{\geq0}[q^{\pm 1/2}] . \]

\begin{defi}
    Let $\D$ be a floor diagram. Its \emph{refined Block-Göttsche multiplicity} is
    \[  \mu_{BG}(\D) = \dprod_{e\in E(\D)} [w(e)]^2 . \]
    It is a symmetric Laurent polynomial in $q$.
\end{defi}

\begin{defi}
    Let $\D$ be a floor diagram of Newton polygon $\Delta$ and genus $g$. We define its \emph{degree} to be the degree of its multiplicity with cleared denominators, which is the Laurent polynomial $(q^{1/2}-q^{-1/2})^{b^\top+b^\bot+2|E^0(\D)|}\mu_{BG}(\D)$:
   \[ \deg(\D) = \frac{b^\top}{2}+\frac{b^\bot}{2}+\dsum_{e\in E^0(\D)} w(e), \]
    and its \emph{codegree} is the complement to the maximal degree of a genus $g$ floor diagram with Newton polygon $\Delta$, \ie to the euclidean area of $\Delta_\beta$:
    \[ \codeg(\D) =  \Area(\Delta_\beta)-\deg(\D). \]
\end{defi}

\begin{rem}
    From \cite{brugalle2008floor}, we know that floor diagrams actually encode simple tropical curves, which are dual to convex subdivisions of the Newton polygon $\Delta$ consisting of triangles and parallelograms. The codegree is actually the area of the parallelograms appearing in the subdivision.
\end{rem}

\begin{expl}
Consider the polygon $\Delta$ of figure \ref{fig-exemple-diag}. It has
\[ \gmax = 5,\ a = 3,\ b^\top = 1,\ b^\bot = 3,\ b_{\text{left}} = \{-1,1,1\} \text{ and } b_{\text{right}} = \{0,0,1\}. \]
The floor diagrams $\D_1,$ $\D_2$ and $\D_3$ have $\Delta$ as Newton polygon. We always represent floor diagrams oriented from bottom to top, hence we do not precise orientation on the figures. Besides, we indicate the weight of an edge only if it is at least 2. The genera of $\D_1,$ $\D_2$ and $\D_3$ are respectively 0, 1 and 1, and their codegrees are 0, 2 and 4. Their refined multiplicities are
\begin{align*}
    \mu_{BG}(\D_1) &= (q+1+q^{-1})^2 (q^{3/2}+q^{1/2}+q^{-1/2}+q^{-3/2})^2 \\
    &= q^5 +4q^4 + 10q^3 +18q^2 +25q + 28 + \dots \\
    \mu_{BG}(\D_2) &= (q^{1/2}+q^{-1/2})^2  (q^{1/2}+q^{-1/2})^2 \\
    &= q^2 +4q+6 +\dots \\
    \mu_{BG}(\D_3) &= 1.
\end{align*}

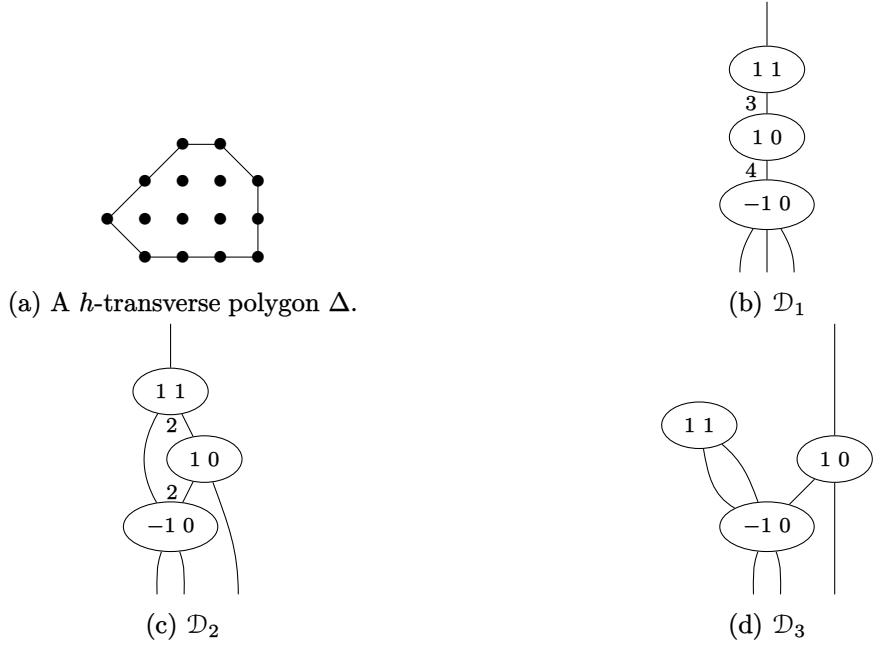
\begin{figure}[h!]
\begin{subfigure}[t]{0.49\textwidth}
    \centering
    \begin{tikzpicture}[scale=0.5] 
	\foreach \p in {(0,0), (1,0), (2,0), (3,0), (-1,1), (0,1), (1,1), (2,1), (3,1), (0,2), (1,2), (2,2), (3,2), (1,3), (2,3)}
			{\draw node at \p {$\bullet$} ;}
	\draw (0,0) to (3,0) to (3,2) to (2,3) to (1,3) to (-1,1) to cycle;
	\end{tikzpicture}
    \caption{A $h$-transverse polygon $\Delta$.}
\end{subfigure}
\begin{subfigure}[t]{0.49\textwidth}
\centering
    \begin{tikzpicture}[scale=0.9] 
        \ufloor (1) at (0,1) (\scriptsize $-1\ 0$);
        \ufloor (2) at (0,2) (\scriptsize $1\ 0$);
        \ufloor (3) at (0,3) (\scriptsize $1\ 1$);
        \draw (1) to node[left] {\scriptsize $4$} (2) to node[left] {\scriptsize $3$}  (3) ;
		\draw (1) to[out=-120, in=90] (-0.4,0);
		\draw (1) to[out=-90, in=90] (0,0);
		\draw (1) to[out=-60, in=90] (0.4,0);
		\draw (3) to[out=90, in=-90] (0,4);
    \end{tikzpicture} 
    \caption{$\D_1$}
\end{subfigure}
    
\begin{subfigure}[t]{0.49\textwidth}
\centering
    \begin{tikzpicture}[scale=0.9] 
        \ufloor (1) at (0,1) (\scriptsize $-1\ 0$);
        \ufloor (2) at (0.5,2) (\scriptsize $1\ 0$);
        \ufloor (3) at (0,3) (\scriptsize $1\ 1$);
        \draw (1) to node[left] {\scriptsize $2$} (2) to node[left] {\scriptsize $2$}  (3) ;
        \draw (1) to[out=120, in=-120] (3) ;
        
		\draw (1) to[out=-110, in=90] (-0.2,0);
		\draw (1) to[out=-70, in=90] (0.2,0);
		\draw (2) to[out=-70, in=90] (1,0);
		\draw (3) to[out=90, in=-90] (0,4);
	\end{tikzpicture}
    \caption{$\D_2$}
\end{subfigure}
\begin{subfigure}[t]{0.49\textwidth}
\centering
    \begin{tikzpicture}[scale=0.9] 
        \ufloor (1) at (0,1) (\scriptsize $-1\ 0$);
        \ufloor (2) at (-1,2.5) (\scriptsize $1\ 1$);
        \ufloor (3) at (1,2) (\scriptsize $1\ 0$);
        \draw (1) to[out=110, in=-40] (2) ;
        \draw (1) to[out=150, in=-80] (2) ;
        \draw (1) to (3) ;
		\draw (1) to[out=-110, in=90] (-0.2,0);
		\draw (1) to[out=-70, in=90] (0.2,0);
		\draw (3) to[out=90, in=-90] (1,4);
		\draw (3) to[out=-90, in=90] (1,0);
	\end{tikzpicture}
    \caption{$\D_3$}
\end{subfigure}
\caption{Examples of floor diagrams.}
\label{fig-exemple-diag}
\end{figure}
\end{expl}

Notice that the orientation on a floor diagram $\D$ induces a partial order $\prec$ on $E(\D) \cup V(\D)$. We can thus define increasing functions on $E(\D) \cup V(\D)$ and the following definition makes sense.

\begin{defi}
    A \emph{marking} $\mfk$ of a floor diagram $\D$ is an increasing bijection $E(\D) \cup V(\D) \to \{1,\dots, |E(\D) \cup V(\D)|\}$. The pair $(\D,\mfk)$ is a \emph{marked floor diagram}. Two marked floor diagrams $(\D,\mfk)$ and $(\D',\mfk')$ are \emph{isomorphic} if there exists an isomorphism $\varphi : \D \to \D'$ of weighted graphs such that $L = L' \circ \varphi$, $R = R'\circ \varphi$ and $\mfk=\mfk' \circ \varphi$.  
\end{defi}

The following theorem can be taken as a definition of the tropical refined invariants.

\begin{theo}[{\cite[Theorem 4.3]{block2016fock}}]
\label{theo-tropical-refined-invariant}
    Let $X$ be a $h$-transverse toric surface, $\beta\in H_2(X,\ZZ)$, and $g\in\ZZ_{\geq0}$. The tropical refined invariant is given by
    \[ BG^X_g(\beta)(q) = \dsum_{(\D,\mfk)}\mu_{BG}(\D)(q) \in \ZZ[q^{\pm1}] \]
    where the sums runs over the isomorphism classes of marked floor diagrams with Newton polygon $\Delta_\beta$ and genus $g$. 
\end{theo}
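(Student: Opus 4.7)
The plan is to follow the standard floor decomposition strategy of Brugallé--Mikhalkin, adapted to the refined Block--Göttsche multiplicity as in Block--Göttsche. Throughout one already has Itenberg--Mikhalkin's invariance: the sum $\sum_C \mu_{BG}(C)$ over genus $g$ tropical curves of degree $\mathrm{trop}(\beta)$ passing through a generic configuration $\omega$ of $-K_X\cdot\beta+g-1$ points does not depend on $\omega$, so we are free to choose $\omega$ conveniently.

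First I would specialize $\omega$ to a \emph{vertically stretched} (floor-decomposed) configuration: the points $p_1,\dots,p_{-K_X\cdot\beta+g-1}$ have pairwise distinct $x$-coordinates that are much smaller than the differences of $y$-coordinates, so that on the tropical side each non-contracted edge becomes either nearly vertical (\textbf{elevator}) or nearly horizontal of length order $1$ (\textbf{floor}). The key combinatorial lemma, exactly as in \cite{brugalle2008floor}, says that in this regime each tropical curve $C$ through $\omega$ decomposes into $a(\Delta_\beta)$ floors separated by elevator edges, and that collapsing each floor to a vertex produces an oriented weighted graph $\D(C)$ satisfying the axioms of a floor diagram of Newton polygon $\Delta_\beta$ and genus $g$: the sources/sinks come from the bottom/top horizontal edges of $\Delta_\beta$, the functions $L,R$ are read off the slopes of the left/right outgoing edges of each floor, and the divergence condition at each vertex is precisely the balancing condition at the corresponding floor.

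Next I would recover the marking. By genericity each point $p_i$ lies either on a unique elevator (which then has weight determined by balancing) or on a unique floor; ordering the points by $y$-coordinate induces an increasing bijection between $\{1,\dots,|E(\D)|+|V(\D)|\}$ and $E(\D)\cup V(\D)$, which is exactly a marking $\mfk$ in the sense of the definition. Conversely, given $(\D,\mfk)$, one reconstructs a unique tropical curve $C$ through $\omega$ by building it floor-by-floor from bottom to top, using the marking to decide at each step whether the next $y$-level of $\omega$ is realized by an elevator or by a floor, and the $h$-transverse hypothesis to guarantee that each floor has a well-defined shape determined by its incoming/outgoing weights and its $L,R$-labels. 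This sets up a bijection between tropical curves through $\omega$ and isomorphism classes of marked floor diagrams of Newton polygon $\Delta_\beta$ and genus $g$.

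The main technical point, and the one I expect to require most care, is the factorization of the multiplicity. One needs to show
\[
\mu_{BG}(C)=\prod_{v\in V(\D)}\mu_{BG}^{\mathrm{vert}}(v)=\prod_{e\in E(\D)}[w(e)]^2,
\]
which amounts to a local computation at each trivalent vertex of $C$. In the vertically stretched situation every vertex of $C$ is either internal to a floor (with two adjacent elevators of weights $w_1,w_2$ and one horizontal floor edge, contributing $[w_1w_2]^2$ in a way that telescopes along the floor) or attaches a floor to an elevator. Following \cite{block2016fock}, a careful bookkeeping of these contributions, together with the convention that boundary ends have weight $1$ and so contribute $[1]^2=1$, collapses the product of vertex multiplicities into $\prod_e [w(e)]^2$. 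Combining the bijection of the previous paragraph with this factorization gives the stated identity $BG^X_g(\beta)(q)=\sum_{(\D,\mfk)}\mu_{BG}(\D)(q)$.
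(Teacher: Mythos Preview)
The paper does not prove this theorem; it quotes it from \cite{block2016fock} and explicitly says it ``can be taken as a definition'' of the tropical refined invariant. Your outline is in fact the correct strategy and is essentially the one carried out in \cite{brugalle2008floor} and adapted to the refined multiplicity in \cite{block2016fock}: stretch the point configuration vertically, read off a marked floor diagram from each tropical curve, and check that the Block--G\"ottsche multiplicity factors through the elevator weights.

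However, your description of the multiplicity factorization contains a genuine error. A trivalent vertex internal to a floor does \emph{not} have ``two adjacent elevators of weights $w_1,w_2$ and one horizontal floor edge''; it has two floor edges (of direction $(1,k)$ for some $k$, by $h$-transversality) and a single elevator of weight $w$ (direction $(0,\pm 1)$, lattice length $w$). Its Mikhalkin multiplicity is $|\det((1,k),(0,w))|=w$, so its refined contribution is $[w]$, not $[w_1w_2]^2$. The factor $[w(e)]^2$ for a bounded elevator $e$ arises because $e$ has two endpoints, one on each adjacent floor, each contributing $[w(e)]$; unbounded ends have weight $1$ and contribute $[1]=1$. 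There is no telescoping and no product $[w_1w_2]$ (indeed $[w_1][w_2]\neq[w_1w_2]$ in general). Once this local picture is corrected, the factorization $\mu_{BG}(C)=\prod_{e\in E(\D)}[w(e)]^2$ follows directly, and the rest of your argument goes through.
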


The tropical refined invariant $BG^X_g(\beta)$ is a symmetric Laurent polynomial in $q$ with integer coefficients. Its degree is $|\mathring\Delta_\beta \cap \ZZ^2|-g = \gmax(\Delta_\beta)-g$.

\begin{lem}\label{lem-poids-grands}
    Let $g\geq 0$ and $M,i>0$. Let $\Delta$ be a $h$-transverse polygon. Assume
    $$b^\top(\Delta), b^\bot(\Delta) > M(g+1)+i.$$
    Let $\D$ be a floor diagram of genus $g$ and $\codeg(\D) \leq i$. Then for any consecutive floors $v_m$ and $v_{m+1}$, there is a bounded edge $e$ between them with weight $w(e) > M$.
\end{lem}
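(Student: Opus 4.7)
The plan is to analyze the flow of weight through the horizontal cut between the floors $v_m$ and $v_{m+1}$, and then exploit the codegree bound to control the contributions that are not short edges between these two floors. Let $F_m$ denote the total weight of edges of $\D$ crossing this cut. A direct flow argument using the divergence formula $\div(v_j) = L(v_j) + R(v_j)$ applied floor by floor from the bottom gives $F_m = b^\bot - \sum_{j=1}^m (L(v_j) + R(v_j))$, which is exactly the integer horizontal width of $\Delta$ at height $m$. Since $\Delta$ is convex, $h$-transverse, and has horizontal top and bottom sides, this width is a concave function of the height, hence $F_m \geq \min(b^\top, b^\bot) > M(g+1)+i$.

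Next I would split $F_m = T_m + \Lambda_m + d_m$, where $T_m$ is the total weight of the short edges between $v_m$ and $v_{m+1}$, $\Lambda_m$ the total weight of the long bounded edges crossing the cut, and $d_m$ the number of sinks attached to a floor of index $\leq m$ plus sources attached to a floor of index $> m$. Combining $\Area(\Delta) = \tfrac{b^\top+b^\bot}{2} + \sum_{m=1}^{a-1} F_m$ with the double-counting identity $\sum_{m=1}^{a-1} F_m = \sum_e w(e)(k_e - j_e) + \sum_m d_m$, where $e$ goes from $v_{j_e}$ to $v_{k_e}$, yields
\[ \codeg(\D) \;=\; \sum_{e \text{ long}} w(e)(k_e - j_e - 1) \;+\; \sum_{m=1}^{a-1} d_m. \]
Both summands are nonnegative and $k_e - j_e - 1 \geq 1$ on long edges, so for any cut $m$, $\Lambda_m + d_m \leq \codeg(\D) \leq i$, and consequently $T_m = F_m - \Lambda_m - d_m > M(g+1) + i - i = M(g+1)$.

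It remains to bound from above the number $n_m$ of short edges between $v_m$ and $v_{m+1}$. Removing the sinks and sources of $\D$, which are leaves, leaves a connected graph on $a$ floors with $a - 1 + g$ bounded edges, hence of first Betti number $g$; the $n_m$ parallel edges between $v_m$ and $v_{m+1}$ contribute $n_m - 1$ independent cycles, so $n_m \leq g + 1$. If every short edge between $v_m$ and $v_{m+1}$ had weight at most $M$, then $T_m \leq M n_m \leq M(g+1)$, contradicting the inequality of the previous paragraph. The main subtlety is the bookkeeping in Step 2, namely checking that $\Lambda_m + d_m$ (and not merely each of the two terms separately) is bounded by $\codeg(\D)$; once this is in place the contradiction is immediate.
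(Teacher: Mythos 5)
Your strategy --- bound the total weight of short elevators between $v_m$ and $v_{m+1}$ from below via the codegree constraint, then cap the number of parallel short elevators by $g+1$ using the first Betti number, so that some elevator is heavy --- is the same as the paper's. But there is a genuine gap at the very first step.

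You identify $F_m = b^\bot - \sum_{j=1}^m\bigl(L(v_j)+R(v_j)\bigr)$ with the integer width $\omega_m$ of $\Delta$ at height $m$. This is only correct when $L$ and $R$ are increasing, i.e.\ when the floors carry the codegree-zero slope assignment. For a general $h$-transverse polygon, a diagram of codegree $\leq i$ may have slope inversions, and for such a diagram one only has $F_m\leq\omega_m$, with the partial sums $\sum_{j\leq m}L(v_j)$ possibly strictly exceeding those of the sorted multiset $b_{\text{left}}(\Delta)$. As a consequence, your concavity lower bound $F_m\geq\min(b^\top,b^\bot)$, your formula $\Area(\Delta)=\tfrac{b^\top+b^\bot}{2}+\sum_m F_m$, and the resulting codegree identity are all false in general: the latter is missing the nonnegative term $\sum_m(\omega_m-F_m)$ encoding the slope inversions. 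This is precisely the ``second phenomenon'' the paper's proof lists. Once restored the argument is recovered: one has $(\omega_m-F_m)+\Lambda_m+d_m\leq\codeg(\D)\leq i$ at each cut, hence $T_m=\omega_m-\bigl[(\omega_m-F_m)+\Lambda_m+d_m\bigr]\geq\omega_m-i>M(g+1)$, and your Betti-number bound $n_m\leq g+1$ on the number of parallel edges finishes as written. As stated, however, your proof is only valid when all floors have the same divergence (e.g.\ Hirzebruch surfaces), where slope inversions cannot occur.
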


\begin{proof}
 
    Let $\omega_m$ be the weight of the unique edge between the floors $m$ and $m+1$ in the unique floor diagram of genus 0 and codegree 0 with Newton polygon $\Delta$. It is equal to the integral length of the slice $\Delta\cap(\ZZ\times\{m\})$ of $\Delta$ at height $m$. As $\Delta$ is convex, it is always bigger than $\min(b^\top,b^\bot)$.
    
    Let $\D$ be a genus $g$ marked diagram, and let $v_1, \dots, v_a$ be the floors of $\D$, ordered by their marking. Let $\tomega_m$ be the sum of the weights of the bounded edges of $\D$ that link two floors $v_m$ and $v_{m+1}$. It may be $0$ if there are no such edges. In comparison to a diagram of genus $g$ and codegree 0, the codegree of $\D$ comes from two different phenomena: 
    \begin{itemize}
        \item an edge with weight $w$ that skips $k$ floors including $v_m$ or $v_{m+1}$ contributes $kw$ to the codegree, and decreases by $w$ the maximal value of weight $\tomega_m$ between $v_m$ and $v_{m+1}$,

        \item two floors $v_k\prec v_{k+1}$ having $R(v_k) > R(v_{k+1})$ or $L(v_k) > L(v_{k+1})$ contribute at least one to the codegree, and decrease the total weight of the bounded edges between $v_k$ and $v_{k+1}$ by at least $1$.
    \end{itemize}
Hence there are at most $\codeg(\D)$ such phenomena, and therefore
    $$\tomega_m \geq \omega_m-\codeg(\D) \geq \omega_m-i>(g+1)M.$$
    In particular, there is at least one edge between the floors $v_m$ and $v_{m+1}$, so that the floors are totally ordered in the diagram. Because $\D$ has genus $g$, the total weight $\tilde\omega_m$ is split into at most $g+1$ edges. Hence we have at least one of them with $w(e) > M$.
\end{proof}

\subsection{Asymptotic refined invariants}
\label{sec-asymp-refined-inv}

In several situations, it is interesting to remove the denominators from the refined Block-G\"ottsche multiplicities, as in \cite{mikhalkin2017quantum} and \cite{bousseau2019tropical} for instance. In our situation, results also adopt a simpler form if we do so. We will thus forget about the denominators. Then $BG^X_g(\beta)(q)$ becomes a Laurent polynomial obtained by counting marked diagrams with a different multiplicity:
\[ \bar{BG}^X_g(\beta)(q) = \dsum_{(\D,\mfk)}\ \dprod_{e\in E^0(\D)} (q^{w(e)/2}-q^{-w(e)/2})^2 \dprod_{e\in E^\top(\D) \cup E^\bot(\D)}(q^{1/2}-q^{-1/2}). \]
The first product is obtained clearing the denominators of each $[w]^2$. The second product comes from the ends: actually, each end contributes $[1]=\frac{q^{1/2}-q^{-1/2}}{q^{1/2}-q^{-1/2}}=1$ in $BG^X_g(\beta)$, and this becomes $q^{1/2}-q^{-1/2}$ when clearing denominators. 

The degree of the invariant $\bar{BG}^X_g(\beta)$ with cleared denominators is $\mathrm{Area}(\Delta_\beta)$. This Laurent polynomial is symmetric (resp. antisymmetric) when $\beta\cdot K_X$ is even (resp. odd).

\begin{expl}
For the toric surface $\PP^2$, if $L$ is the class of a toric divisor one has
\begin{align*}
\bar{BG}^{\PP^2}_0(L)(q)= & q^{1/2}-q^{-1/2}, \\ 
\bar{BG}^{\PP^2}_0(2L)(q)= & (q^{1/2}-q^{-1/2})^4\cdot 1 \\
= & q^2-4q+6-4q^{-1}+q^{-2},\\
\bar{BG}^{\PP^2}_0(3L)(q)= & (q^{1/2}-q^{-1/2})^7\cdot (q+10+q^{-1}) \\
= & q^{9/2}+3 q^{7/2} -48 q^{5/2} +168q^{3/2} -294q^{1/2}+\cdots -q^{-9/2}.
\end{align*}
\end{expl}

The Laurent polynomial $\bar{BG}^X_g(\beta)(q)$ can be turned into a true polynomial by setting
\[\widetilde{BG}^X_g(\beta)(x)=x^{\mathrm{Area}(\Delta_\beta)}\bar{BG}^X_g(\beta)\left(\frac{1}{x}\right).\]

This way, the codegree $i$ coefficient of $\bar{BG}^X_g(\beta)$ (term $q^{\mathrm{Area}(\Delta_\beta)-i}$) becomes the degree $i$ coefficient of $\widetilde{BG}^X_g(\beta)$ (term $x^i$). Thanks to this change of variable, we can now view the refined invariant as a function
\[\begin{array}{crcl}
   \widetilde{BG}^X_g : &   \Eff(X) & \longrightarrow & \ZZ[\![x]\!], \\
     & \beta & \longmapsto & \widetilde{BG}^X_g(\beta)(x)
\end{array}  \]
with values in the ring of formal series with integer coefficients $\ZZ[\![x]\!]$, even though for any $\beta$ the value is polynomial in $x$ of degree $2\mathrm{Area}(\Delta_\beta)$. The codomain $\ZZ[\![x]\!]$ is a valuation ring, and can thus be endowed with the topology coming from the associated ultrametric distance. A basis of neighborhoods of $0$ for this topology is given by the ideals $x^n\ZZ[\![x]\!]$, so that $f\in\ZZ[\![x]\!]$ is close to $0$ if $f\equiv 0 \modulo x^n$ for $n\in\NN$ sufficiently big. We prefer to use $\ZZ[\![x]\!]$ as codomain, because it is a complete space, more suited to express our asymptotic result.  Meanwhile, we have a notion of neighborhood of infinity in the cone $\Eff(X)$: for $C>0$, we say that $\beta\succ C$ if $\beta\cdot D\geqslant C$ for every toric divisor $D$.

\begin{prop}\label{prop-new-mult}
    For any $h$-transverse toric surface $X$, any $\beta \in H_2(X,\ZZ)$ and any $g\geq0$, one has 
    \[ \widetilde{BG}^X_g(\beta)(x) = \dsum_{(\D,\mfk)} \mu(\D)(x) \]
    where the sum runs over the isomorphism classes of marked floor diagrams of Newton polygon $\Delta_\beta$ and genus $g$, and with
    \[ \mu(\D)(x) = x^{\codeg(\D)} (1-x)^{b^\top(\D)+b^\bot(\D)} \dprod_{e\in E^0(\D)} \left(1-x^{w(e)} \right)^2.\]
    We call $\mu(\D)$ the \emph{multiplicity} of the floor diagram $\D$.
\end{prop}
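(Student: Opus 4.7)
The approach is a direct term-by-term computation, since the claim is essentially a bookkeeping reformulation of Theorem \ref{theo-tropical-refined-invariant}. The definitions of $\bar{BG}^X_g(\beta)$ and $\widetilde{BG}^X_g(\beta)$ amount to clearing denominators in the Block--Göttsche multiplicity, substituting $q = 1/x$, and multiplying by $x^{\Area(\Delta_\beta)}$. All three operations are linear in the sum over marked diagrams, so I would reduce to checking the identity $x^{\Area(\Delta_\beta)} \bar{\mu}(\D)(1/x) = \mu(\D)(x)$ for a single marked floor diagram $(\D,\mfk)$.

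The first step is to transform each individual factor. A bounded edge $e \in E^0(\D)$ of weight $w(e)$ contributes $(q^{w(e)/2} - q^{-w(e)/2})^2$, which under $q \mapsto 1/x$ becomes $x^{-w(e)}(1 - x^{w(e)})^2$; each end contributes $(q^{1/2} - q^{-1/2})$, which becomes $x^{-1/2}(1 - x)$. Multiplying over all edges and ends of the diagram produces
\[ x^{-(b^\top(\D) + b^\bot(\D))/2 - \sum_{e \in E^0(\D)} w(e)} \cdot (1 - x)^{b^\top(\D) + b^\bot(\D)} \prod_{e \in E^0(\D)} (1 - x^{w(e)})^2. \]

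The second step is to absorb the negative power of $x$ into the prefactor $x^{\Area(\Delta_\beta)}$. Using the definitions $\deg(\D) = (b^\top(\D) + b^\bot(\D))/2 + \sum_{e \in E^0(\D)} w(e)$ and $\codeg(\D) = \Area(\Delta_\beta) - \deg(\D)$, the residual exponent of $x$ is exactly $\codeg(\D)$, and the expression becomes $\mu(\D)(x)$. Summing over marked diagrams then yields the claimed formula.

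There is no genuine obstacle; the only subtle point is a parity check, namely that the half-integer powers $x^{-1/2}$ produced by the ends combine with the integer powers from bounded edges and with $x^{\Area(\Delta_\beta)}$ to produce the honest nonnegative integer exponent $x^{\codeg(\D)}$. This is automatic from the combinatorial interpretation of $\codeg(\D)$ as the area of the parallelograms in the dual subdivision (mentioned in the remark following the definition of codegree), which guarantees it is a nonnegative integer for every floor diagram.
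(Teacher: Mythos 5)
Your proof is correct and follows essentially the same route as the paper's: substitute $q = 1/x$ into the cleared-denominator multiplicity, factor out the negative powers of $x$ from each edge and end, and absorb them into $x^{\Area(\Delta_\beta)}$ using the identity $\Area(\Delta_\beta) = \codeg(\D) + \sum_{e\in E^0(\D)} w(e) + (b^\top + b^\bot)/2$. The only addition is your closing parity remark, which is not needed as a separate check since the integrality of $\codeg(\D)$ is already built into the definition and the combinatorial interpretation the paper records.
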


\begin{proof}
By the definition of the codegree one has
     \[ \Area(\Delta_\beta) = \codeg(\D) + \dsum_{e\in E^0(\D)} w(e) + \dfrac{b^\top+b^\bot}{2} .\]
    Hence
    \begin{align*}
    \widetilde{BG}^X_g(\beta) &=  x^{\mathrm{Area}(\Delta_\beta)}  \dsum_{(\D,\mfk)}\ \dprod_{e\in E^0(\D)} (x^{-w(e)/2}-x^{w(e)/2})^2 \dprod_{\substack{e\in E^\top(\D) \\ \cup E^\bot(\D)}}(x^{-1/2}-x^{1/2}) \\
        &= \dsum_{(\D,\mfk)} x^{\codeg(\D)} \dprod_{e\in E^0(\D)} (1-x^{w(e)})^2  \dprod_{\substack{e\in E^\top(\D) \\ \cup E^\bot(\D)}} (1-x) \\
        &= \dsum_{(\D,\mfk)} \mu(\D).
    \end{align*}
\end{proof}

The main results of \cite{brugalle2020polynomiality} and \cite{mevel2023universal} deal with the polynomiality in terms of $\beta$ of the coefficient of fixed codegree $i$ of $BG^X_g(\beta)(q)$, which become the $x^i$ coefficient of $\widetilde{BG}_g^X(\beta)(x)$ in our setting. The polynomial behaviour is not affected by the clearing of denominators. Indeed, invariants with or without clearing of denominators differ by multiplication (or division) by $(1-x)^{-K_X\cdot\beta+2g-2}$, whose coefficients up to degree $i$ are also polynomials in $\beta$. We denote the degree $i$ coefficient by $\left(\widetilde{BG}^X_g(\beta)\right)_i$. In \cite{brugalle2020polynomiality} the authors show that when $X$ is a Hirzebruch surface or a (weighted) projective space, then for any $i\geq 0$ there exists a polynomial function $AR^X_{g,i}(\beta)$ on the lattice $H_2(X,\ZZ)$ such that for every $\beta$ with $\beta\cdot D$ large enough with respect to $i$ and $g$ for any $D$ toric divisor, one has  
\[ \left(\widetilde{BG}^X_g(\beta)(x)\right)_i=AR^X_{g,i}(\beta).\]
Taking the generating series in $i$, the result can be rephrased as follows.

\begin{theo}(\cite{brugalle2020polynomiality})
    Let $X$ be a Hirzebruch surface. For every $g\geqslant 0$, there exists a function $AR^X_g:H_2(X,\ZZ)\to\ZZ[\![x]\!]$ which is a polynomial with coefficients in $\ZZ[\![x]\!]$, such that
    $$\widetilde{BG}^X_g(\beta)=AR^X_g(\beta)+o(1)\in\ZZ[\![x]\!].$$
    The asymptotic development takes place when $\beta\to\infty$. We call $AR^X_g$ the \emph{asymptotic refined invariant}.
\end{theo}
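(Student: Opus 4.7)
Since $\ZZ[\![x]\!]$ is the projective limit of the quotients $\ZZ[x]/(x^n)$, it suffices to construct, for each $n\geq 1$, a function $P_n : H_2(X,\ZZ)\to\ZZ[x]/(x^n)$ whose coefficient of $x^i$ is a polynomial in $\beta$ for all $i<n$, and a constant $C_n>0$, such that $\widetilde{BG}^X_g(\beta)\equiv P_n(\beta)\pmod{x^n}$ whenever $\beta\succ C_n$. Compatibility of the $(P_n)_n$ on the cofinal region $\{\beta\succ C_n\}$ will define $AR^X_g$ as the inverse limit, and the asymptotic equality $\widetilde{BG}^X_g(\beta) - AR^X_g(\beta) = o(1)$ as $\beta\to\infty$ is then automatic.

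The first step is to apply the floor diagram expansion of Proposition~\ref{prop-new-mult}: only marked diagrams $(\D,\mfk)$ with $\codeg(\D)\leq n-1$ contribute modulo $x^n$. For $\beta$ with $b^\top,b^\bot > (n-1)(g+2)$, Lemma~\ref{lem-poids-grands} applied with $M=n-1$ guarantees that in every such surviving diagram there is, between each pair of consecutive floors, a bounded edge $e$ with $w(e)\geq n$. Such a \emph{heavy} edge contributes $(1-x^{w(e)})^2\equiv 1 \pmod{x^n}$ and so disappears from $\mu(\D)$; meanwhile the boundary factor satisfies $(1-x)^{b^\top+b^\bot}\equiv \sum_{k=0}^{n-1}(-1)^k \binom{b^\top+b^\bot}{k} x^k\pmod{x^n}$, whose coefficients are polynomial in $\beta$.

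The second step is to stratify the surviving diagrams by their \emph{light skeleton}: the bounded combinatorial datum recording the bounded edges of weight $<n$, any edges in excess of the single heavy edge per consecutive floor pair (responsible for the genus and part of the codegree), and their attachment pattern to a block of active floors. For fixed $g$ and codegree budget $n-1$, only finitely many light skeletons $T$ occur. For each $T$, the contribution to $\widetilde{BG}^X_g(\beta) \pmod{x^n}$ factors as a $T$-dependent intrinsic factor (carrying $x^{\codeg(T)}$ and the light-edge weights, viewed in $\ZZ[x]/(x^n)$) times a counting factor enumerating the embeddings of $T$ into a class-$\beta$ marked diagram. In the Hirzebruch case, this counting factor involves: the placement of $T$ within the chain of floors, the distribution of the $b^\top$ sinks and $b^\bot$ sources among floors compatibly with $L, R$ and with the divergence constraints imposed by $T$, and the number of markings of the resulting diagram (a linear extension count). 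Each of these reduces to a product of binomials of the form $\binom{a+c}{c'}$, $\binom{b^\top+c}{c'}$, $\binom{b^\bot+c}{c'}$ with $c,c'$ depending only on $T$; the total contribution is therefore polynomial in $\beta$, and summing over the finitely many $T$ produces $P_n(\beta)$.

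The main obstacle will be the clean decoupling of the marking count into an internal linear-extension count for the light skeleton $T$ and an external interleaving count with the long heavy chain and the pendant sources and sinks. This amounts to a combinatorial lemma: given a fixed finite poset $T$ inserted at a prescribed location into an arbitrarily long rigid chain equipped with many pendant elements above and below each chain node, the total number of linear extensions is a polynomial in the chain length and in the pendant counts. Once this lemma is established and the compatibility of the $(P_n)_n$ on their common domain is verified, the theorem follows by passing to the inverse limit.
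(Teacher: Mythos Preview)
Your approach differs substantially from the paper's. The paper does not give a self-contained proof; it observes that the statement is a reformulation of the main result of \cite{brugalle2020polynomiality}, which already establishes that each codegree-$i$ coefficient is eventually given by a polynomial $P^X_{g,i}(\beta)$. The only additional content in the paper's argument is the observation that, after clearing denominators (passing from $BG$ to $\widetilde{BG}$), the degree bound on $P^X_{g,i}$ improves from $g+i$ (as in \cite{brugalle2020polynomiality}) to a bound depending only on $g$ --- and it is precisely this uniform bound that allows the coefficients to be packaged into a single polynomial $AR^X_g$ with coefficients in $\ZZ[\![x]\!]$.

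Your direct combinatorial approach via light skeletons is essentially the strategy of \cite{brugalle2020polynomiality} itself, and it is adequate to show that each $x^i$ coefficient is eventually polynomial in $\beta$. However, there is a genuine gap: you never establish that the degrees of these polynomials are bounded independently of $i$ (equivalently, of $n$). Without this, the inverse limit is a well-defined function $H_2(X,\ZZ)\to\ZZ[\![x]\!]$ but not a polynomial, which is what the theorem asserts. The issue is real: in your setup the boundary factor $(1-x)^{b^\top+b^\bot}$ contributes $\binom{b^\top+b^\bot}{k}$ to the $x^k$ term, of degree $k$ in $\beta$; and for a skeleton $T$ of codegree $c$, the linear-extension count you describe is typically a polynomial of degree roughly $c$ in $b^\top,b^\bot$ (each side end contributes a linear factor counting parallel elements). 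The theorem requires these growing-degree contributions to cancel so that only terms of degree $\leqslant g$ survive. This cancellation is exactly what the paper isolates by tracking how the function $\Phi_i(k)$ of \cite[Corollary 3.6]{brugalle2020polynomiality} becomes constant after clearing denominators; your outline neither exhibits nor invokes any such mechanism.
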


\begin{rem}
The result is likely to be true for any toric surface, but in \cite{brugalle2020polynomiality} the authors restrict the proof to a family of surfaces that includes Hirzebruch surfaces for technical reasons. In genus 0, this result is shown to hold in \cite{mevel2023universal} for any $h$-transverse toric surface, with explicit formula for the polynomials. 
We give in this paper another proof in genus 0 (Theorem \ref{theo-AR-genus0-general}) and a proof in genus 1 (Theorem \ref{theo-AR-genus1-general}) that holds for any $h$-transverse and non-singular toric surface.
\end{rem}

\begin{proof}[Proof of the formulation using \cite{brugalle2020polynomiality}.]
    The formulation of the result presented here amounts to prove that there exists polynomials $P^X_{g,i}(\beta)$ whose degree is bounded by a constant in $g$ such that we have 
    $$\widetilde{BG}^X_g(\beta)=\sum_{i=0}^\infty P^X_{g,i}(\beta)x^i+o(1).$$
    The $o(1)$ means that for every $n\in\NN$, there exists $C>0$ such that
    $$\beta\succ C\Rightarrow \widetilde{BG}^X_g(\beta)-\sum_{i=0}^\infty P^X_{g,i}(\beta)x^i \equiv 0 \modulo x^n.$$
    In other words, $\langle\widetilde{BG}^X_g(\beta)\rangle_i$ is given by $P^X_{g,i}$ if $\beta$ is sufficiently big. This is the statement from \cite{brugalle2020polynomiality} up to the bound on the degrees. Actually, in \cite{brugalle2020polynomiality} the degree of $P^X_{g,i}$ is $g+i$. The dependence in $i$ is due to the fact that the denominators are not removed from the Block-G\"ottsche multiplicity. If we remove them, \cite[Section 3.2]{brugalle2020polynomiality} is modified as follows. The function $\Phi_i(k)$ in \cite[Corollary 3.6]{brugalle2020polynomiality} does not depend on $k$ anymore: it was previously a polynomial of degree $i$ but is now constant, equal to $1$ if $i=0$, and to $0$ if $i>0$. Thus, in \cite[Corollary 3.7]{brugalle2020polynomiality}, the degree is also $0$. When used in the proof of \cite[Lemma 5.7]{brugalle2007enumeration}, the bound $i$ disappears and yields the fact that the degrees are bounded by $g$.
\end{proof}

One way to interpret the asymptotic is that for any $i$ and any class $\beta$ large enough (understand $\beta\succ C$ for $C\in\NN$ large enough), $AR^X_g(\beta)$ correctly gives the first $i$ coefficients of $\widetilde{BG}^X_g(\beta)$. The strategy to compute the asymptotic refined invariant $AR_g^X$ is thus to fix some $i\in\NN$ and to compute $\widetilde{BG}^X_g(\beta)$ modulo $x^{i+1}$ for $\beta$ big enough. Provided we get an expression that does not depend on $i$, we can make $i$ go to $\infty$ to obtain the value of $AR_g^X(\beta)\in\ZZ[\![x]\!]$. 

\medskip

This formulation as an asymptotic development is in fact inspired by a reformulation of the polynomiality conjecture \cite{gottsche1998conjectural} on the number of curves with a fixed number of nodes $\delta$ passing through a suitable number of points.
For a surface $X$ with a line bundle $\L$, G\"ottsche's conjecture states that the number $N_\delta^X(\L)$ of curves in the linear system $|\L|$ with $\delta$ nodes passing through a generic configuration of $h^0(X,\L)-1-\delta$ points in $X$ is given by a (universal) polynomial $P^X_\delta(\L^2,\L\cdot K_X,K_X^2,c_2(X))$ provided $\L$ is sufficiently ample. In other words, if we view $P^X_\delta$ and $N^X_\delta$ as functions
$$P^X_\delta,N^X_\delta:\mathrm{Amp}(X)\subset H^2(X,\ZZ)\longrightarrow \ZZ,$$
where $\mathrm{Amp}(X)\subset H^2(X,\ZZ)$ is the ample cone of $X$, we have
\[ N^\delta(\L)=P^\delta(\L)+o(1). \]
As the topology of $\ZZ$ is discrete, it means that we have equality if $\L$ is sufficiently ample. The formulation in the refined case is more subtle since the topology on $\ZZ[\![x]\!]$ is more sophisticated.

\section{Generating series in fixed degree}
\label{sec-series-fixed-codeg}

In this section, we wish to determine the generating series in the genus parameter, \ie $\sum_{g=0}^\infty AR_{g,i} u^g$ for fixed $i$. We provide an explicit expression for $i=0,1$. The $i=0$ case amounts to compute the leading coefficient of the tropical refined invariant, which was already known from \cite{itenberg2013block}. The main contribution is Theorem \ref{theo-ARcodeg1}, which gives a closed formula for $i=1$.

Recall from \cite{brugalle2020polynomiality} or \cite{mevel2023universal} that a diagram $\D$ has codegree 0 if and only if the order is total on its floors, it has no side edge (\ie an edge bypassing a floor), and the functions $R$ and $L$ are increasing. Recall also that that when looking at floor diagrams of small codegree, we can assume using Lemma \ref{lem-poids-grands} that the diagrams have a total order on their vertices, and we can control the number of side edges as well as the monotonicity of the functions $L$ and $R$.

In all this section, for $\beta \in H_2(X,\ZZ)$ with $\Delta_\beta$ being $h$-transverse, we will refer as $\D_0$ to be the floor diagram of Figure \ref{fig-diag00}. It is the unique diagram of Newton polygon $\Delta_\beta$, genus 0 and codegree 0. We denote by $\omega_m$ the weight of the edge between the floors $v_m$ and $v_{m+1}$ for $1\leq m \leq a-1$. Note that
\[ \dsum_{m=1}^{a-1} (\omega_m-1) = \gmax  = \deg(\D_0) . \]
\begin{figure}[h!]
    \centering
    \begin{tikzpicture}
        \ufloor (1) at (0,1) ($v_1$);
        \ufloor (2) at (0,2.5) ($v_m$);
        \ufloor (3) at (0,4) ($v_{m+1}$);
        \ufloor (4) at (0,5.5) ($v_a$);
		\draw (1) to[out=-130, in=90] (-0.4,0);
		\draw (1) to[out=-100, in=90] (-0.1,0);
		\draw (1) to[out=-80, in=90] (0.1,0);
		\draw (1) to[out=-50, in=90] (0.4,0);

		\draw (4) to[out=130, in=-90] (-0.4,6.5);
		\draw (4) to[out=100, in=-90] (-0.1,6.5);
		\draw (4) to[out=80, in=-90] (0.1,6.5);
		\draw (4) to[out=50, in=-90] (0.4,6.5);

        \draw[dashed] (1) to (2);
        \draw (2) to node[left] {\scriptsize $\omega_m$} (3) ;
        \draw[dashed] (3) to (4);

    \end{tikzpicture} 
    \caption{The diagram $\D_0$.}
    \label{fig-diag00}
\end{figure}

\subsection{Degree 0} 

We start by computing $AR^X_{g,0}$, the leading term of the asymptotic refined invariant. This amounts to compute the leading coefficient of the tropical refined invariant, which was already handled in \cite[Proposition 2.11]{itenberg2013block} using the lattice path algorithm from \cite{mikhalkin2005enumerative}. We recall a proof here, because it uses a construction starting from $\D_0$ that will appear several times in subsection \ref{subsec-deg1}.

\begin{prop}[{\cite{itenberg2013block}}]\label{theo-ARcodeg0}
The generating series in the genus parameter of the leading term of the asymptotic refined invariant is given by
\[ \dsum_{g\geq0} AR_{g,0}^X u ^g = (1+u)^{\gmax} .\]
\end{prop}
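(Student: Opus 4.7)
The plan is to extract the $x^0$ coefficient of $\widetilde{BG}^X_g(\beta)$ directly from the floor diagram formula of Proposition \ref{prop-new-mult}. Since
\[ \mu(\D)(x)=x^{\codeg(\D)}(1-x)^{b^\top+b^\bot}\prod_{e\in E^0(\D)}(1-x^{w(e)})^2, \]
every factor apart from $x^{\codeg(\D)}$ evaluates to $1$ at $x=0$, so only codegree-$0$ diagrams contribute, and each one contributes its number of markings modulo isomorphism. Using the characterization recalled at the beginning of this section, a codegree-$0$ diagram has totally ordered floors $v_1\prec\cdots\prec v_a$, no side edges, and increasing $L$, $R$; in particular the values of $L$ and $R$ on floors are fixed, and the bounded edges are entirely encoded by, for each $m$, a multiset $\lambda_m$ of positive weights summing to $\omega_m$ and of some size $k_m\geq 1$. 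The genus is the total number of extra parallel edges, so $\sum_m(k_m-1)=g$.

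Next I would count markings modulo isomorphism. The $k_m$ parallel edges between $v_m$ and $v_{m+1}$ are pairwise incomparable in the orientation-induced partial order, so a marking picks a linear order among them, while automorphisms of $\D$ permute edges of equal weight. Hence the number of marking-equivalence classes attached to a multiset $\lambda_m$ is the multinomial coefficient $k_m!/\prod_j m_j^{(m)}!$, where $m_j^{(m)}$ is the multiplicity of $j$ in $\lambda_m$. (The freedom to reorder the indistinguishable sources and sinks cancels between marking choices and isomorphisms.) Summing these multinomials over all multisets $\lambda_m$ of size $k_m$ and sum $\omega_m$ returns exactly the number of compositions $\binom{\omega_m-1}{k_m-1}$. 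Setting $j_m=k_m-1\geq 0$, the contribution of codegree-$0$ genus-$g$ diagrams becomes
\[ \langle\widetilde{BG}^X_g(\beta)\rangle_0=\sum_{\substack{j_1,\ldots,j_{a-1}\geq 0\\ \sum_m j_m=g}} \prod_{m=1}^{a-1}\binom{\omega_m-1}{j_m}. \]

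Finally I would recognize this as $[u^g]\prod_m(1+u)^{\omega_m-1}=[u^g](1+u)^{\gmax}$, using the identity $\sum_m(\omega_m-1)=\gmax$ recalled above. This expression is already polynomial in $\beta$ through $\gmax$, so no asymptotic error appears, and summing over $g$ with weight $u^g$ produces $(1+u)^{\gmax}$. The only delicate step is the bookkeeping of markings modulo isomorphism for the parallel bounded edges, turning multisets of weights into compositions via the multinomial coefficient; it is transparent in codegree $0$ but is exactly the combinatorial pattern that will require substantial upgrading in the higher-codegree analyses of the next sections, where side edges and failures of monotonicity of $L$ and $R$ must also be incorporated.
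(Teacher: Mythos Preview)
Your proof is correct and follows essentially the same approach as the paper's: both reduce to counting marked codegree-$0$ diagrams of genus $g$, arrive at the product $\prod_m\binom{\omega_m-1}{j_m}$, and sum via the binomial theorem using $\sum_m(\omega_m-1)=\gmax$. The paper avoids your detour through multisets and automorphisms by directly parametrizing marked diagrams via compositions (ordered tuples of weights, with the parallel edges marked increasingly from left to right), which yields $\binom{\omega_m-1}{g_m}$ in one step rather than recovering it as a sum of multinomials.
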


\begin{proof}
To construct a marked floor diagram of positive genus and codegree 0, we add $g_m$ edges between the floors $v_m$ and $v_{m+1}$ of $\D_0$, marking the new edges increasingly from left to right, and splitting the weight $\omega_m$ onto the $g_m+1$ edges. The genus of the new diagram is $g_1+\dots+g_{a-1}$. For each $m$ there are $\binom{\omega_m-1}{g_m}$ tuples of $g_m+1$ positive integers with sum $\omega_m$, \ie ways to distribute $\omega_m$ onto the marked edges. Since we only care about the number of marked diagrams of genus $g$ to compute $AR_{g,0}^X$, using the binomial formula one has 
\begin{align*}
\dsum_{g\geq0} AR_{g,0}^X u ^g &=  \dsum_{g_1,\dots,g_{a-1} \geq 0} \ u^{g_1+\dots+g_{a-1}} \dprod_{m=1}^{a-1} \binom{\omega_m-1}{g_m} \\ 
    &= \dprod_{m=1}^{a-1} \dsum_{g_m\geq0}  \binom{\omega_m-1}{g_m} u^{g_m} \\
    &= \dprod_{m=1}^{a-1} (1+u)^{\omega_m-1} = (1+u)^{\gmax}.
\end{align*}
\end{proof}

\subsection{Degree 1} \label{subsec-deg1}

We now compute the generating series of the second terms of the asymptotic refined invariants.

\begin{theo}\label{theo-ARcodeg1}
For a $h$-transverse toric surface $X$, the asymptotic polynomials yielding the degree $1$ coefficient are polynomials in $\beta^2,K_X\cdot\beta,\chi,K_X^2$. Moreover, their generating series has the following expression:
\[\dsum_{g\geq 0} AR_{g,1}^X u^g  = (1+u)^{\gmax} \left[-\beta^2 \dfrac{u^3}{(1+u)^3} +2(K_X\cdot\beta)\dfrac{u^2}{(1+u)^3} + \chi\dfrac{1}{1+u} - K_X^2 \dfrac{u}{(1+u)^3} \right] . \]

\end{theo}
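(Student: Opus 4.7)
The plan is to compute $[x^1]\widetilde{BG}^X_g(\beta)$ for each $g$ via Proposition~\ref{prop-new-mult} and then assemble the generating series in $u$. Since the multiplicity of a marked diagram of codegree $c$ has a factor $x^c$, only diagrams with $\codeg(\D) \leqslant 1$ contribute modulo $x^2$, and by Lemma~\ref{lem-poids-grands} for $\beta$ sufficiently large these are all small local perturbations of the base diagram $\D_0$.

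The contributions split into four families. \textbf{Type A} (codegree $0$): diagrams built from $\D_0$ by adding $g_m$ parallel edges in each slot as in Proposition~\ref{theo-ARcodeg0}; the $x^1$ coefficient of their multiplicity equals $-(b^\top + b^\bot) - 2\#\{e \in E^0(\D) : w(e) = 1\}$, and the generating series of the number of weight-$1$ parallel edges comes from the coefficient extraction
\[ [x^\omega]\,\frac{x(1-x)^2}{(1-(1+u)x)^2} = (1+u)^{\omega-3}(2u + \omega u^2). \]
\textbf{Types B, C, D} (codegree $1$): the codegree-$1$ diagrams are obtained from $\D_0$ (with parallel edges added for genus) by exactly one of the following perturbations: \textbf{(B)} a weight-$1$ side edge from $v_m$ to $v_{m+2}$, with the total weights of slots $m$ and $m+1$ reduced by $1$ to preserve divergences; \textbf{(C)} a single source moved from $v_1$ to $v_2$ (reducing $\omega_1$ by $1$), or dually a sink moved from $v_a$ to $v_{a-1}$; \textbf{(D)} an adjacent swap of values of the bijection $L$ or $R$ differing by $1$, which is empty in the Hirzebruch case. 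The number of marked diagrams in each case factors as (positions of the perturbation) $\times$ (compositions of the reduced weights): $g_m + g_{m+1} + 4$ positions for the Type~B side edge, $b^\bot + g_1 + 2$ positions for the Type~C moved source (and symmetrically for sinks), and an inversion count for Type~D.

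Summing all four contributions, the resulting generating series factors with base $(1+u)^{\gmax - 3}$, giving a total of the form $(1+u)^{\gmax - 3}[C_0 + C_1 u + C_2 u^2 + C_3 u^3]$. I would first carry out the computation explicitly for the Hirzebruch surface $\FF_\delta$, where $\omega_m = b + \delta m$ simplifies the arithmetic, finding $C_0 = 4$, $C_1 = 0$, $C_2 = 4 - 2(b^\top + b^\bot) - 4a$, and $C_3 = -(2ab + \delta a^2) = -\beta^2$; one checks by direct expansion that this matches $-\beta^2 u^3 + 2(K_X \cdot \beta) u^2 + \chi(1+u)^2 - K_X^2 u$ using the Hirzebruch identities $\chi = 4$, $K_X^2 = 8$, $K_X \cdot \beta = -(2+\delta)a - 2b$. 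For a general $h$-transverse non-singular toric surface, Type~D supplies the additional dependence on the side slopes of the polygon, and the conversion from local polygon data to the intrinsic invariants $\beta^2, K_X\cdot\beta, \chi, K_X^2$ proceeds via Lemma~\ref{lem-toric-surface-euler-char-self-intersection} together with standard identities such as $\sum_m(\omega_m - 1) = \gmax$.

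The main obstacle is the marking-theoretic enumeration of the codegree-$1$ diagrams, especially counting the positions in which a moved end or side edge can be interleaved with the parallel edges of adjacent slots (to which it is incomparable in the partial order induced by the orientation), while preserving the divergence condition $\div(v) = L(v) + R(v)$ through appropriate compensations of the slot weights. Once this bookkeeping is carried out correctly, the remaining algebraic simplification into the closed form of the theorem is routine.
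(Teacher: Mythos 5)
Your proposal follows essentially the same route as the paper: you split the codegree-$\leq 1$ diagrams into exactly the four families the paper treats in its Lemmas \ref{lem-contrib-codeg0}--\ref{lem-contrib-codeg1-corner} (codegree-$0$ diagrams with a weight-$1$ edge, bounded side edges, an end moved one floor, and a slope inversion), with the same position counts $g_m+g_{m+1}+4$, $g_1+b^\bot+2$, etc., and the coefficient extraction $[x^\omega]\,x(1-x)^2/(1-(1+u)x)^2=(1+u)^{\omega-3}(2u+\omega u^2)$ is a correct repackaging of the paper's $\sum_{g}(g+1)\binom{\omega-2}{g-1}u^g$. The only superficial difference is that you propose to specialize to Hirzebruch surfaces first and then reintroduce the slope-inversion term, whereas the paper keeps the general $h$-transverse setting throughout; the numerical values $C_0=4$, $C_1=0$, $C_2=4+2K_X\cdot\beta$, $C_3=-\beta^2$ you quote for $\FF_\delta$ agree with the theorem.
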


Given that the multiplicity takes the form
\[ \mu(\D) = x^{\codeg(\D)} (1-x)^{b^\top+b^\bot} \dprod_{e\in E_0(\D)} \left(1-x^{w_e}\right)^2, \]
only diagrams with $\codeg(\D)=0,1$ contribute to $\sum_g AR_{g,1}^X u^g$. For the unique marked diagram of codegree $0$ we need to consider the term in $x$, while for the marked floor diagrams of codegree $1$ we need to consider their number. We subdivide the proof of Theorem \ref{theo-ARcodeg1} in four lemmas, each one computing the contribution of a specific family of diagrams to the global sum. To get Theorem \ref{theo-ARcodeg1}, one only needs to sum the expressions from Lemmas \ref{lem-contrib-codeg0}, \ref{lem-contrib-codeg1-iside}, \ref{lem-contrib-codeg1-bside} and \ref{lem-contrib-codeg1-corner}.

\begin{lem}\label{lem-contrib-codeg0}
    The codegree 0 diagrams with Newton polygon $\Delta_\beta$ contribute 
    \[  (1+u)^{\gmax} \left[ -(b^\top+b^\bot) -2\gmax\frac{u^2}{(1+u)^2}-2(a-1)\frac{u(2+u)}{(1+u)^2} \right] \]
    to $\sum_g AR_{g,1}^X u^g$.
\end{lem}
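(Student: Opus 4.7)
The plan is to extract, for each codegree 0 marked diagram $\D$, the coefficient of $x$ in $\mu(\D)$, and then sum with weight $u^{g(\D)}$. Since $\codeg(\D)=0$, the multiplicity simplifies to $\mu(\D) = (1-x)^{b^\top+b^\bot}\prod_{e\in E^0(\D)}(1-x^{w(e)})^2$, so modulo $x^2$ one has
\[ \mu(\D) \equiv 1 - \bigl((b^\top+b^\bot) + 2N_1(\D)\bigr)\,x, \]
where $N_1(\D)$ counts the bounded edges of weight one. The required contribution therefore splits as $-(b^\top+b^\bot)\Sigma_0 - 2\Sigma_1$, where $\Sigma_0 := \sum_\D u^{g(\D)}$ and $\Sigma_1 := \sum_\D u^{g(\D)} N_1(\D)$, both sums ranging over codegree 0 marked diagrams with Newton polygon $\Delta_\beta$. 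By Proposition \ref{theo-ARcodeg0} the first factor is $\Sigma_0 = (1+u)^{\gmax}$, so the problem reduces to evaluating $\Sigma_1$.

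For this I reuse the parametrization underlying Proposition \ref{theo-ARcodeg0}: every codegree 0 marked diagram is obtained from $\D_0$ by choosing, independently in each of the $a-1$ slots between consecutive floors, an ordered tuple $(w_1,\dots,w_{g_m+1})$ of positive integers summing to $\omega_m$, the total genus being $\sum_m g_m$. Because $AR^X_{g,1}$ is a polynomial function of $\beta$, the identity need only hold asymptotically, so I may assume $\omega_m \geq 2$ for every $m$. The main task is then to compute the per-slot generating series
\[ B_m(u) := \sum_{g_m \geq 0} u^{g_m} \sum_{\substack{w_1+\cdots+w_{g_m+1}=\omega_m \\ w_i\geq 1}} \#\{i:w_i=1\}. \]
By permutation symmetry over the positions, the inner sum equals $(g_m+1)\binom{\omega_m-2}{g_m-1}$ for $g_m \geq 1$ and vanishes for $g_m = 0$. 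A short differentiation-type manipulation of $\sum_j (j+2)\binom{\omega_m-2}{j}u^j$ then yields the closed form $B_m(u) = 2u(1+u)^{\omega_m-2} + u^2(\omega_m-2)(1+u)^{\omega_m-3}$.

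Dividing by the per-slot composition count $A_{\omega_m}(u)=(1+u)^{\omega_m-1}$ produces the compact ratio $\frac{B_m}{A_{\omega_m}} = \frac{2u}{1+u} + \frac{(\omega_m-2)u^2}{(1+u)^2}$. Summing over slots and using the identity $\sum_m(\omega_m-2) = \gmax-(a-1)$ gives
\[ \Sigma_1 = (1+u)^{\gmax}\left[\frac{2(a-1)u}{1+u} + \frac{(\gmax-(a-1))u^2}{(1+u)^2}\right], \]
after which a brief algebraic simplification combines $-(b^\top+b^\bot)\Sigma_0 - 2\Sigma_1$ into the stated expression. The real work is the closed-form evaluation of $B_m$; the rest is bookkeeping and elementary algebra. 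A minor but worth-noting subtlety is that the formula for $B_m$ fails when $\omega_m=1$, which is precisely why restricting to the asymptotic regime is convenient.
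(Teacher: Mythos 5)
Your proof is correct and takes essentially the same approach as the paper: both split the target as $-(b^\top+b^\bot)\Sigma_0 - 2\Sigma_1$, reuse the composition parametrization from Proposition \ref{theo-ARcodeg0}, and reduce to the same per-slot evaluation of $\sum_{g_m}(g_m+1)\binom{\omega_m-2}{g_m-1}u^{g_m}$. Your bookkeeping through the ratio $B_m/A_{\omega_m}$ is a cosmetic reorganization of the paper's direct summation, and your explicit remark that $\omega_m\geq 2$ is needed is a valid (and implicitly assumed in the paper, since one only needs the asymptotic regime) clarification.
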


\begin{proof}
We construct a diagram of genus $g$ and codegree 0 as in the proof of Proposition \ref{theo-ARcodeg0}. A diagram $\D$ of codegree 0 is counted with the degree 1 term of its multiplicity, that is
\[ -(b^\top+b^\bot) - 2 |\{e\in E^0(\D)\ |\ w_e=1\}| .\]
Hence, the contribution to $\sum_g AR_{g,1}^X u^g$ coming from the first term is $-(b^\top+b^\bot) (1+u)^{\gmax}$. We need to compute the contribution coming from the second term, \ie enumerate the choice of a diagram together with an edge of weight $1$. To determine this contribution, we proceed as previously but for any fixed $m$, we assume one of the $g_m+1$ edges between $v_m$ and $v_{m+1}$ has weight $1$, and it remains a weight $\omega_m-1$ to split into $g_m$ parts. Forgetting the $-2$, this gives
\[ \begin{array}{rl}
 & \dsum_{m=1}^{a-1} \left(\sum_{\substack{g_j\geqslant 0 \\j\neq m}}\prod_{j\neq m}\binom{\omega_j-1}{g_j}u^{g_j}\right)\left(\sum_{g_m\geqslant 0} \dsum_{\substack{\text{edges} \\ v_m\to v_{m+1}}} \binom{\omega_m-2}{g_m-1}u^{g_m} \right) \\
 = & \dsum_{m=1}^{a-1} (1+u)^{\sum_{j\neq m}[\omega_j-1]}\sum_{g_m\geqslant 0} (g_m+1) \binom{\omega_m-2}{g_m-1}u^{g_m} \\
 = & \dsum_{m=1}^{a-1} (1+u)^{\gmax-(\omega_m-1)}\left[ (\omega_m-2)u^2(1+u)^{\omega_m-3}+2u(1+u)^{\omega_m-2}  \right] \\
 = & (1+u)^{\gmax}\left[ \gmax\dfrac{u^2}{(1+u)^2}+(a-1)\dfrac{u(2+u)}{(1+u)^2} \right].
\end{array} \] 
\end{proof}

We now look at the diagrams of codegree 1. The degree 1 term of the multiplicity of a diagram of codegree 1 is 1, so it suffices to determine the number of marked floor diagrams of codegree 1. There are two possibilities for the codegree being 1: the presence of a side edge, \ie an edge bypassing a floor, or a slope inversion, \ie a lack of growth of the divergence function. We investigate all the cases.

\begin{lem} \label{lem-contrib-codeg1-iside}
    The codegree 1 diagrams with an infinite side edge, with Newton polygon $\Delta_\beta$ contribute 
    \[ (1+u)^{\gmax}\left[ (\omega_1+\omega_{a-1}-2)\frac{u}{(1+u)^2} + (b^\bot+b^\top)\frac{1}{1+u}  +2\frac{2+u}{(1+u)^2}\right] \]
    to $\sum_g AR_{g,1}^X u^g$.
\end{lem}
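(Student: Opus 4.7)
The plan is to enumerate marked codegree-$1$ diagrams with an infinite side edge by classifying them into two symmetric families and computing each family's generating function by adapting the block-by-block argument of Proposition \ref{theo-ARcodeg0}. First, since an infinite edge has weight $1$, a bypass of $k$ floors costs codegree $k$, so only a sink attached to $v_{a-1}$ (bypassing $v_a$) or a source attached to $v_2$ (bypassing $v_1$) yields codegree exactly $1$. Balancing divergences at $v_{a-1}$ and $v_a$ in the top case forces the total weight of bounded edges between these floors to be $\omega_{a-1}-1$; the analogous statement holds for the bottom. The remaining structure must then be codegree-$0$-like.

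For the top configuration, blocks $m\neq a-1$ contribute $(1+u)^{\omega_m-1}$ exactly as in Proposition \ref{theo-ARcodeg0}. The modified block between $v_{a-1}$ and $v_a$ involves $g_{a-1}+1$ parallel edges of total weight $\omega_{a-1}-1$, giving $\binom{\omega_{a-1}-2}{g_{a-1}}$ ordered compositions (each a distinct marked isomorphism class, by the same argument as in Proposition \ref{theo-ARcodeg0}). In addition, we must enumerate the iso-distinct insertions of the floating sink at $v_{a-1}$ into the canonical linear extension of this block,
\[[\text{edges in compositional order}]\;\longrightarrow\;v_a\;\longrightarrow\;[\text{$b^\top-1$ sinks at }v_a].\]
The floating sink can be inserted in $g_{a-1}+2$ positions among the distinguishable edges (including the gap just before $v_a$), or in one of $b^\top$ positions among the interchangeable $v_a$-sinks (distinguished only by how many $v_a$-sinks precede it), for a total of $g_{a-1}+b^\top+2$ insertions up to isomorphism.

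Using $\sum_g g\binom{n}{g}u^g=nu(1+u)^{n-1}$, the modified block's generating function becomes
\[\sum_{g\geq0}\binom{\omega_{a-1}-2}{g}(g+b^\top+2)\,u^g=(1+u)^{\omega_{a-1}-3}\bigl[(b^\top+2)+(b^\top+\omega_{a-1})u\bigr],\]
and multiplying by the other blocks' contribution $(1+u)^{\gmax-\omega_{a-1}+1}$ yields the top generating function $(1+u)^{\gmax-2}\bigl[(b^\top+2)+(b^\top+\omega_{a-1})u\bigr]$. By the symmetric argument with $b^\top\leftrightarrow b^\bot$ and $\omega_{a-1}\leftrightarrow\omega_1$, the bottom side edges contribute $(1+u)^{\gmax-2}\bigl[(b^\bot+2)+(b^\bot+\omega_1)u\bigr]$. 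Summing these and rewriting $(b^\top+b^\bot+4)+(b^\top+b^\bot+\omega_1+\omega_{a-1})u = (b^\top+b^\bot)(1+u)+2(2+u)+(\omega_1+\omega_{a-1}-2)u$ produces the stated formula. The main delicate step is the floating-sink position count: isolating the $g_{a-1}+2$ iso-distinct positions among the fixed-order edges from the $b^\top$ iso-distinct positions among the interchangeable $v_a$-sinks requires careful bookkeeping of the iso-action on markings, and in particular the count is $b^\top$ rather than $b^\top-1$ because the gap "just after $v_a$" is a valid insertion distinct from those lying strictly between consecutive $v_a$-sinks.
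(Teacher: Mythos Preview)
Your proof is correct and follows essentially the same approach as the paper's: classify into the two symmetric cases (side source attached to $v_2$ versus side sink attached to $v_{a-1}$), treat the unaffected blocks as in Proposition~\ref{theo-ARcodeg0}, and in the modified block combine the composition count $\binom{\omega_\bullet-2}{g_\bullet}$ with the number $g_\bullet+b^{\top/\bot}+2$ of marking positions for the side end. The paper simply phrases the position count as ``parallel to $(g_1+1)+(b^\bot-1)$ edges and $1$ floor, hence $g_1+b^\bot+2$ possibilities'', while you break it down more carefully into the distinguishable-edge segment and the interchangeable-sink segment; both arrive at the same number and the same generating series.
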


\begin{proof}
We deal with the case when the side edge is a source ; the case when it is a sink is handled similarly by symmetry.

Let $\D_\bot$ be the diagram of Figure \ref{fig-sidesource} ; it is obtained from $\D_0$ by putting a source adjacent to $v_2$. It has genus 0 and codegree 1. Let $\tomega_k$ be the weight of the edge between $v_k$ and $v_{k+1}$ for $1\leq k \leq a-1$. One has
\[  \tomega_1 = \omega_1 -1 \text{ and } \tomega_k = \omega_k,\ 2 \leq k \leq a-1.
\]
To create a diagram of genus $g$, as in Theorem \ref{theo-ARcodeg0} we add $g_m$ edges between the floor $v_m$ and $v_{m+1}$ of $\D_\bot$, marking the new edges increasingly from left to right, and split the weight $w_m$ onto the $g_m+1$ edges. The genus of the new diagram is $g_1+\dots+g_{a-1}$ and for each $m$ there are $\binom{\tomega_m-1}{g_m}$ ways to distribute $\tomega_m$ onto the marked edges. To entirely determine the marked floor diagram, it remains to mark the side edge. It is parallel to $(g_1+1) + (b^\bot -1)$ edges and 1 floor, hence there are $g_1 + b^\bot+2$ possibilities for its marking. In the end, this case contributes
\[ \begin{array}{rl}
     & \dsum_{g_1,\dots,g_{a-1} \geq 0} \ (g_1 + b^\bot+2) \dprod_{m=1}^{a-1} \binom{\tomega_m-1}{g_m} u^{g_m} \\
    =& (1+u)^{\gmax-(\omega_1-1)} \dsum_{g_1 \geq0}  (g_1 + b^\bot+2) \binom{\tomega_1-1}{g_1}u^{g_1}  \\
    =& (1+u)^{\gmax-(\omega_1-1)} \left[ (\tomega_1-1) u (1+u)^{\tomega_1-2} + (b^\bot+2)(1+u)^{\tomega_1-1} \right] \\
    =& (1+u)^{\gmax}\left[ (\omega_1-1)\dfrac{u}{(1+u)^2} + b^\bot\dfrac{1}{1+u}+\dfrac{2+u}{(1+u)^2} \right]
    \end{array} \]

Similarly, if the side edge is a sink we get
\[ (1+u)^{\gmax}\left[ (\omega_{a-1}-1)\frac{u}{(1+u)^2} + b^\top\frac{1}{1+u} +\frac{2+u}{(1+u)^2} \right].\]
We get the result summing the two cases.
\end{proof}

\begin{figure}[h!]
\begin{subfigure}[t]{0.49\textwidth}
    \centering
    \begin{tikzpicture}
        \ufloor (1) at (0,1.5) ($v_1$);
        \ufloor (2) at (0,3) ($v_2$);
        \ufloor (3) at (0,5) ($v_a$);
		\draw (1) to[out=-130, in=90] (-0.6,0);
		\draw (1) to[out=-100, in=90] (-0.2,0);
		\draw (1) to[out=-80, in=90] (0.2,0);
		\draw (1) to[out=-50, in=90] (0.6,0);

		\draw (3) to[out=130, in=-90] (-0.6,6.5);
		\draw (3) to[out=100, in=-90] (-0.2,6.5);
		\draw (3) to[out=80, in=-90] (0.2,6.5);
		\draw (3) to[out=50, in=-90] (0.6,6.5);

        \draw (1) to node[left] {\scriptsize $\tomega_1$} (2);
        \draw[dashed] (2) to (3);

        \draw (2) to[out=-50,in=90] (1,0) ;
    \end{tikzpicture} 
    \caption{The diagram $\D_\bot$.}
    \label{fig-sidesource}
\end{subfigure}
\begin{subfigure}[t]{0.49\textwidth}
    \centering
    \begin{tikzpicture}
        \ufloor (1) at (0,1) ($v_1$);
        \ufloor (2) at (0,2.5) ($v_j$);
        \ufloor (3) at (0,3.5) ($v_{j+1}$);
        \ufloor (4) at (0,4.5) ($v_{j+2}$);
        \ufloor (5) at (0,6) ($v_a$);
		\draw (1) to[out=-130, in=90] (-0.4,0);
		\draw (1) to[out=-100, in=90] (-0.1,0);
		\draw (1) to[out=-80, in=90] (0.1,0);
		\draw (1) to[out=-50, in=90] (0.4,0);

		\draw (5) to[out=130, in=-90] (-0.4,7);
		\draw (5) to[out=100, in=-90] (-0.1,7);
		\draw (5) to[out=80, in=-90] (0.1,7);
		\draw (5) to[out=50, in=-90] (0.4,7);

        \draw[dashed] (1) to (2);
        \draw (2) to node[left] {\scriptsize $\tomega_j$} (3) to node[left] {\scriptsize $\tomega_{j+1}$} (4);
        \draw[dashed] (4) to (5);

        \draw (2) to[out=30,in=-30] (4) ;
    \end{tikzpicture} 
    \caption{The diagram $\D_j$.}
    \label{fig-sidebounded}
\end{subfigure}
\caption{ }
\end{figure}

\begin{lem}\label{lem-contrib-codeg1-bside}
    The codegree 1 diagrams with a bounded side edge contribute 
    \[  (1+u)^{\gmax} \left[ \dsum_{j=1}^{a-2}\left( \omega_j+\omega_{j+1}-2\right)\frac{u^2}{(1+u)^3} + 2(a-2) \frac{u(2+u)}{(1+u)^3} \right]  \]
    to $\sum_g AR_{g,1}^X u^g$.
\end{lem}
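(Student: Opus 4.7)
The plan is to parametrize codegree $1$ diagrams with a bounded side edge by specifying both the location of the bypassed floor and the distribution of extra parallel edges. A codegree $1$ diagram with a bounded side edge must have that edge of weight one and skip a single floor, so for each $j \in \{1, \ldots, a-2\}$ I would consider the base diagram $\D_j$ of Figure \ref{fig-sidebounded}, obtained from $\D_0$ by adjoining a weight one bounded edge from $v_j$ to $v_{j+2}$. The divergence constraints at $v_j$ and $v_{j+2}$ force $\tomega_j = \omega_j - 1$ and $\tomega_{j+1} = \omega_{j+1} - 1$, while $\tomega_m = \omega_m$ for $m \notin \{j, j+1\}$. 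This diagram has genus $1$ and codegree $1$.

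To build genus $g$ diagrams of this type, I would add $g_m \geq 0$ extra bounded edges between $v_m$ and $v_{m+1}$, splitting the weight $\tomega_m$ among the $g_m + 1$ resulting parallel edges in $\binom{\tomega_m - 1}{g_m}$ ways. The resulting diagram has genus $1 + \sum_m g_m$ and codegree still equal to $1$, as adding parallel edges creates neither skipping nor slope inversion. It remains to mark the side edge $e$: the constraint $v_j \prec e \prec v_{j+2}$ in the partial order shows that $e$ is incomparable to the $(g_j + 1) + 1 + (g_{j+1} + 1) = g_j + g_{j+1} + 3$ elements lying strictly between $v_j$ and $v_{j+2}$, namely the $g_j + 1$ edges $v_j \to v_{j+1}$, the floor $v_{j+1}$, and the $g_{j+1} + 1$ edges $v_{j+1} \to v_{j+2}$. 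In analogy with Lemma \ref{lem-contrib-codeg1-iside}, this produces $g_j + g_{j+1} + 4$ admissible positions for $e$ in the marking.

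Since a codegree $1$ diagram contributes $x$ to $\widetilde{BG}^X_g(\beta)$ modulo $x^2$ and we are extracting the $x^1$ coefficient, each marked diagram contributes $u^g$ to the generating series. Summing over all choices gives
\[ u \sum_{j=1}^{a-2} \sum_{g_1, \ldots, g_{a-1} \geq 0} (g_j + g_{j+1} + 4) \prod_{m=1}^{a-1} \binom{\tomega_m - 1}{g_m} u^{g_m}. \]
Applying the standard identities $\sum_k \binom{n}{k} u^k = (1+u)^n$ and $\sum_k k \binom{n}{k} u^k = n u (1+u)^{n-1}$, together with $\sum_m (\tomega_m - 1) = \gmax - 2$, each inner sum factors into closed form, and routine algebraic simplification then yields the stated expression. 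The main technical subtlety is the marking count: one must correctly identify which elements are incomparable to the side edge in the partial order induced by the orientation, and convert this into a count of valid insertion positions in a linear extension.
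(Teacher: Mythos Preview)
Your proposal is correct and follows essentially the same approach as the paper: both start from the base diagram $\D_j$, add $g_m$ parallel edges between consecutive floors, count $\binom{\tomega_m-1}{g_m}$ weight splittings, determine $g_j+g_{j+1}+4$ marking positions for the side edge, and sum using the standard binomial identities. Your phrasing of the marking count via incomparable elements is a slight rewording of the paper's ``parallel to $1$ floor and $g_j+g_{j+1}+2$ edges'', but the argument is otherwise identical.
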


\begin{proof}
Start with the diagram $\D_j$ of figure \ref{fig-sidebounded} ; it has genus 1 and a side edge around the floor $v_{j+1}$. Let $\tomega_m$ be the weight of the edge between $v_m$ and $v_{m+1}$ for $1\leq m \leq a-1$. One has
\[ \tomega_j = \omega_j -1,\ \tomega_{j+1} = \omega_{j+1}-1 \text{ and } \tomega_m = \omega_m,\ m\notin \{j,j+1\}.\]
As previously, we add $g_m$ edges between the floor $v_m$ and $v_{m+1}$ of $\D_\bot$, mark the new edges increasingly from left to right, and split the weight $w_m$ onto the $g_m+1$ edges. The created diagram as genus $1+g_1+\dots+g_{a-1}$. The side edge is parallel to 1 floor and $g_j+g_{j+1}+2$ edges, so there are $g_j+g_{j+1}+4$ possibilities for its marking. Hence, the contribution in that case is
\[ \begin{array}{rl}
     & u \dsum_{j=1}^{a-2} \ \dsum_{g_1,\dots,g_{a-1} \geq 0} (g_j+g_{j+1}+4) \dprod_{m=1}^{a-1} \binom{\tomega_m-1}{g_m} u^{g_m} \\
     =&  \dsum_{j=1}^{a-2} (\tomega_j+\tomega_{j+1}-2)u^2 (1+u)^{\gmax-3} + 4(a-2)u (1+u)^{\gmax-2} \\
    =& (1+u)^{\gmax} \left[ \dsum_{j=1}^{a-2} (\omega_j+\omega_{j+1}-2)\dfrac{u^2}{(1+u)^3} + 2(a-2) \dfrac{u(2+u)}{(1+u)^3} \right] .
\end{array} \]
\end{proof}

\begin{lem}\label{lem-contrib-codeg1-corner}
    The codegree 1 diagrams with an slope inversion contribute 
    \[ (\chi-4) (1+u)^{\gmax-1}  \]
    to $\sum_g AR_{g,1}^X u^g$.
\end{lem}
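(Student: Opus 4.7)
My plan is to adapt the construction from the proof of Proposition \ref{theo-ARcodeg0} to enumerate the marked floor diagrams whose codegree $1$ comes from a slope inversion, i.e., from a failure of monotonicity of either $L$ or $R$ between two consecutive floors. By Lemma \ref{lem-poids-grands}, for $\beta$ sufficiently large any diagram with $\codeg(\D) \leq 1$ has its floors totally ordered $v_1 \prec \dots \prec v_a$, and its codegree can come only from side edges (handled in Lemmas \ref{lem-contrib-codeg1-iside} and \ref{lem-contrib-codeg1-bside}) or from slope inversions, which are therefore what remains to handle.

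The first step is to pin down which slope inversions produce codegree exactly $1$. Starting from $\D_0$, a single swap of $L(v_m)$ and $L(v_{m+1})$ (and analogously for $R$) changes the divergences at $v_m$ and $v_{m+1}$ by $\pm(L(v_{m+1})-L(v_m))$; a direct flow-conservation argument using $\omega_j = \omega_{j-1} - \div(v_j)$ shows that only the single intermediate weight $\omega_m$ is modified, decreasing by $L(v_{m+1})-L(v_m)$, and the codegree increases by the same amount. Codegree exactly $1$ therefore forces the swap to involve two $L$-values (or $R$-values) differing by exactly $1$, and a straightforward check rules out more involved rearrangements (double inversions, long inversions, simultaneous $L$- and $R$-swaps), all of which contribute codegree at least $2$.

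Next I count the admissible swaps. By non-singularity of $\Delta$, the two primitive direction vectors at every interior corner form a lattice basis, which in the $h$-transverse setting forces consecutive values in $b_{\text{left}}$ (resp. $b_{\text{right}}$) to differ by exactly $1$ at each interior left (resp. right) corner. Hence admissible swaps are in bijection with the interior corners on the left or right side of $\Delta$. Since $\Delta$ has $\chi$ edges in total, two of which are the horizontal top and bottom, there are $\chi-2$ non-horizontal edges, giving $\chi-4$ interior corners once the four corners where a horizontal edge meets a non-horizontal one are excluded.

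Finally, for each admissible swap at $(v_m, v_{m+1})$, I add $g_j$ parallel edges between $v_j$ and $v_{j+1}$ as in Proposition \ref{theo-ARcodeg0}, splitting the relevant weight into $g_j+1$ positive parts. Only $\omega_m$ has been decreased by $1$, so summing over the genus yields
\[
(1+u)^{\omega_m-2}\prod_{j\neq m}(1+u)^{\omega_j-1} = (1+u)^{\gmax-1},
\]
independently of $m$, and the degree-$1$ coefficient in $x$ of $\mu(\D)$ for a codegree-$1$ diagram is $1$. Multiplying by the $\chi-4$ admissible swaps produces the announced $(\chi-4)(1+u)^{\gmax-1}$. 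The main obstacle I anticipate is checking that no codegree-$1$ phenomenon has been missed; here the non-singularity hypothesis is essential, guaranteeing that each interior corner yields exactly one admissible swap and that distinct inversions, or the interaction of an inversion with the added parallel edges, cannot produce further codegree-$1$ contributions.
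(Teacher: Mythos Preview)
Your proof is correct and follows essentially the same approach as the paper's own proof: identify that the only codegree-$1$ inversion is a single swap of adjacent $L$- or $R$-values differing by $1$, count these as $\chi-4$ (one per corner not adjacent to a horizontal side), observe that the effect on the weights is to decrease a single $\omega_m$ by $1$, and then run the genus-adding argument from Proposition~\ref{theo-ARcodeg0} to obtain $(1+u)^{\gmax-1}$. The paper's proof is terser but structurally identical; you have simply spelled out in more detail why the inversion count is $\chi-4$ (via non-singularity and the edge count) and why more complicated rearrangements are excluded.
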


\begin{proof}
To get a floor diagram of codegree 1 with an inversion, the only possibility is the existence of a unique couple $(v,v')$ of adjacent floors such that $v \prec v'$ and $R(v) = R(v')+1$ or $L(v) = L(v')+1$, and anywhere else in the floor diagram, $R$ and $L$ are increasing. If $\chi$ is the number of corners of $\Delta_\beta$, there are $\chi-4$ such pairs, one for each corner of $\Delta$ non-adjacent to a horizontal side. The only difference with the codegree $0$ diagram from Figure \ref{fig-diag00} is that the weight between $v_m$ and $v_{m+1}$ is $\omega_m-1$ so that the sum of weights yields $\gmax-1$ instead of $\gmax$. In the end, this case contributes
\[ (\chi-4) (1+u)^{\gmax-1} .\]
\end{proof}

We can finally prove Theorem \ref{theo-ARcodeg1}.

\begin{proof}[Proof of Theorem \ref{theo-ARcodeg1}]
Summing the contributions of Lemmas \ref{lem-contrib-codeg0}, \ref{lem-contrib-codeg1-iside}, \ref{lem-contrib-codeg1-bside} and \ref{lem-contrib-codeg1-corner}, and using the relations
\[ \left\{\begin{array}{rcl}
    \gmax &=& \dsum_{m=1}^{a-1}(\omega_m-1), \\
     -K_X\cdot\beta &=& b^\top+b^\bot +2a , \\
     \beta^2 &=& 2\gmax-2+K_X\cdot\beta  \\
     K_X^2 + \chi &=& 12 
\end{array} \right. \]
we get
\[\dsum_{g\geq 0} AR_{g,1}^X u^g  = (1+u)^{\gmax} \left[-\beta^2 \dfrac{u^3}{(1+u)^3} +2(K_X\cdot\beta)\dfrac{u^2}{(1+u)^3} + \chi\dfrac{1}{1+u} - K_X^2 \dfrac{u}{(1+u)^3} \right] . \]
\end{proof}

\section{Asymptotic refined invariant for genus $0$}
\label{sec-genus0}

In this section we compute the asymptotic refined invariant for genus $0$ for any $h$-transverse and non-singular polygon having two horizontal sides. This was already done in \cite{mevel2023universal}, but we give in this section a different proof to present methods that can be applied when dealing with genus 1 in Section \ref{sec-genus1}. We start with Hirzebruch surfaces before going into the general case. To do so, we use \emph{words} to enumerate marked floor diagrams contributing to the asymptotic count. We will compute $\widetilde{BG}_0^X(\beta)$ modulo $x^{i+1}$ for some $i$, before letting $i$ goes to $\infty$.

    \subsection{The case of Hirzebruch surfaces}

The tropical refined invariants can be computed by using enumeration of marked floor diagrams. However, as shown in \cite[Lemma 4.1]{brugalle2020polynomiality}, if one cares about the asymptotic of coefficients of fixed codegree only a handful of diagrams contribute. Consider the Hirzebruch surface $\FF_\delta$, so that all floors have the same divergence. In the genus $0$ case, provided that $a,b>i$, any marked diagram contributing to a coefficient of degree at most $i$ satisfies the following:
    \begin{itemize}
        \item the floors are totally ordered in the diagram,
        \item some of the top (resp. bottom) ends might not be attached to the first (resp. last) floor but to another floor,
    \end{itemize}
Let $u^\top_j$ (resp. $u^\bot_j$) be the number of top (resp. bottom) ends that skip $j$ floors. The codegree of a diagram $\D$ comes from these ends not attached to the extremal floors. It is equal to
$$\codeg(\D)=\sum_{j=1}^\infty j(u^\top_j+u^\bot_j),$$
note that this sum is actually finite. 
Each diagram is characterized by the numbers $(u_j^\top,u_j^\bot)$. We then have to account for the markings. We restrict to the collection of diagrams for which the floors are totally ordered. Rather than enumerating the latter and count their markings, as done in \cite[Section 4]{brugalle2020polynomiality} and \cite{mevel2023universal}, we directly count these marked marked diagrams, encoding them with \emph{words}.

\medskip

\subsubsection{From marked diagrams to words.}

We consider words over the following alphabet : $\{\sff,\sfe,\sfb_j,\sft_j\}_{j\in\NN}$. The letters used stand for ``floor'', ``edge/elevator'', ``bottom end'' and ``top end''. The indices of the letters refer to the number of floors they skip. We first explain how to get a word $W(\D)$ from a genus $0$ marked diagram $\D$ whose floors are totally ordered. Let $aE+bF\in H_2(\FF_\delta,\ZZ)$ be the class of the diagram. The floors of $\D$ are labelled from $1$ to $a$. The letters of the word $W(\D)$ are in ordered correspondence with the marked points of $\D$ with the following rule:
    \begin{itemize}
        \item for a marked point on a floor, the letter is $\sff$,
        \item for a marked point on a bounded edge, the letter is $\sfe$,
        \item for a marked point on a top end that skips $j\geqslant 0$ floors, the letter is $\sft_j$,
        \item for a marked point on a bottom end that skips $j\geqslant 0$ floors, the letter is $\sfb_j$.
    \end{itemize}

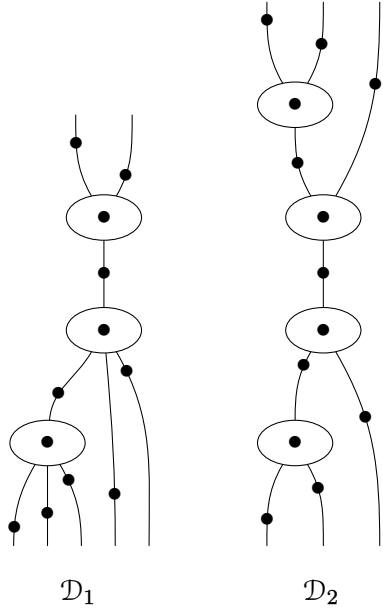
\begin{figure}[h!]
    \centering
    \begin{tabular}{ccc}
    \begin{tikzpicture}[line cap=round,line join=round,x=0.75cm,y=0.75cm]
        \floor (F1) at (-1,0);
        \floor (F2) at (0,2);
        \floor (F3) at (0,4);
        \node (T1) at (-0.5,6) {};
        \node (T2) at (0.5,6) {};
        \node (B1) at (-1.6,-2) {};
        \node (B2) at (-1,-2) {};
        \node (B3) at (-0.4,-2) {};
        \node (B4) at (0.2,-2) {};
        \node (B5) at (0.8,-2) {};
        \marked (T1) to (F3) pos=0.33 in=120 out=-90 ;
        \marked (T2) to (F3) pos=0.75 in=60 out=-90 ;

        \marked (F3) to (F2) pos=0.5 in=90 out=-90 ;
        \marked (F2) to (F1) pos=0.66 in=85 out=-120;
        
        \marked (B1) to (F1) pos=0.2 in=-120 out=90 ;
        \marked (B2) to (F1) pos=0.4 in=-90 out=90 ;
        \marked (B3) to (F1) pos=0.8 in=-60 out=90 ;
        \marked (B4) to (F2) pos=0.25 in=-85 out=90 ;
        \marked (B5) to (F2) pos=0.9 in=-60 out=90 ;
    \end{tikzpicture} & & \begin{tikzpicture}[line cap=round,line join=round,x=0.75cm,y=0.75cm]
        \floor (F1) at (-0.5,0);
        \floor (F2) at (0,2);
        \floor (F3) at (0,4);
        \floor (F4) at (-0.5,6);
        \node (T1) at (-1,8) {};
        \node (T2) at (0,8) {};
        \node (T3) at (1,8) {};
        \node (B1) at (-1,-2) {};
        \node (B2) at (0,-2) {};
        \node (B3) at (1,-2) {};
        \marked (T1) to (F4) pos=0.2 in=120 out=-90 ;
        \marked (T2) to (F4) pos=0.5 in=60 out=-90 ;
        \marked (T3) to (F3) pos=0.4 in=60 out=-90 ;

        \marked (F4) to (F3) pos=0.5 in=120 out=-90 ;
        \marked (F3) to (F2) pos=0.5 in=90 out=-90 ;
        \marked (F2) to (F1) pos=0.2 in=90 out=-120 ;
        
        \marked (B1) to (F1) pos=0.3 in=-120 out=90 ;
        \marked (B2) to (F1) pos=0.7 in=-60 out=90 ;
        \marked (B3) to (F2) pos=0.65 in=-60 out=90 ;
    \end{tikzpicture} \\
    $\D_1$ & $\ \ \ \ $ & $\D_2$ \\
    \end{tabular}

    \caption{The two marked diagrams corresponding to the words from Example \ref{expl-correspondence-word-diagram-genus-0-hirzebruch}.}
    \label{fig:correspondence-words-marked-diagrams}
\end{figure}

\begin{expl}\label{expl-correspondence-word-diagram-genus-0-hirzebruch} 
On Figure \ref{fig:correspondence-words-marked-diagrams}, we have two genus $0$ marked diagrams with totally ordered floors corresponding to the words
\begin{align*}
    W(\D_1) &=  \sfb_0\sfb_0\sfb_1\sfb_0\sff\sfe\sfb_1\sff\sfe\sff\sft_0\sft_0, \\
    W(\D_2) &=  \sfb_0\sfb_0 \sff\sfb_1\sfe\sff\sfe\sff\sfe\sff\sft_1\sft_0\sft_0. 
\end{align*}
Note that it is possible to recover the marked diagrams from the words.
\end{expl}

This correspondence between diagrams and words is in fact bijective provided we have some assumptions on the words.

\begin{prop}\label{prop-corresp-word-diag-genus-0-hirzebruch}
    Let $\D$ be a marked floor diagram in the class $\beta=aE+bF\in H_2(\FF_\delta,\ZZ)$. Then the word $W(\D)$ satisfies the following.
        \begin{enumerate}[label=(\roman*)]
            \item Forgetting about the letters $\sfb_*$ and $\sft_*$, the word is just
            $$(\sff\sfe)^{a-1}f = \sff\sfe\sff\sfe\cdots\sff\sfe\sff. $$
            Moreover, there are $b$ letters $\sfb_*$ and $b+\delta a$ letters $\sft_*$.
            \item Given a letter $\sfb_k$, assume the word forgetting the $\sfe$, $\sft_j$ and remaining $\sfb_j$ is $\sff^p \sfb_k \sff^{a-p}$, then we have $k\geqslant p$.
            \item Given a letter $\sft_k$, assume the word forgetting the $e$, $\sfb_j$ and remaining $\sft_j$ is $\sff^{a-p} \sft_k \sff^p$, then we have $k\geqslant p$.
        \end{enumerate}
    Conversely, a word satisfying the above conditions yields a marked floor diagram for which the floors are totally ordered. The set of words satisfying the above conditions is denoted by $\WWW_{aE+bF}$.
\end{prop}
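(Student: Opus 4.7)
The plan is to prove both implications by translating the partial order on $E(\D)\cup V(\D)$ directly into positional constraints on letters of the word.

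In the direction $\D \mapsto W(\D)$, I would argue as follows. Since $\D$ has genus $0$ with totally ordered floors $v_1 \prec \cdots \prec v_a$, the underlying graph is a linear chain of $a$ floors joined by exactly $a-1$ bounded elevators $e_1,\ldots,e_{a-1}$, together with $b$ bottom ends and $b+\delta a$ top ends, the last two counts being dictated by the sides of the polygon $\Delta_{aE+bF}$ of $\FF_\delta$. The partial order forces $v_m \prec e_m \prec v_{m+1}$, which after erasing the end-letters gives the alternating pattern $(\sff\sfe)^{a-1}\sff$, proving (i). For (ii), a bottom end of skip index $k$ is by definition attached to $v_{k+1}$ and, as a source at that vertex, satisfies $\sfb_k \prec v_{k+1}$ in the partial order; hence the letter $\sfb_k$ appears before the $(k+1)$-th occurrence of $\sff$, giving $p \leq k$. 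Condition (iii) is completely symmetric for top ends.

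For the converse, I would construct an explicit inverse $W \mapsto \D_W$: reading $W$ from left to right, declare the $m$-th $\sff$ to be the floor $v_m$, each $\sfe$ to be the elevator between the two adjacent floors (well-defined thanks to (i)), each $\sfb_k$ to be a bottom end attached to $v_{k+1}$, and each $\sft_k$ a top end attached to $v_{a-k}$; define the marking $\mfk$ by the position of each letter in $W$. The alternation in (i) makes the marking strictly increasing along every elevator, and conditions (ii), (iii) translate verbatim into $\mfk(\sfb_k)<\mfk(v_{k+1})$ and $\mfk(v_{a-k})<\mfk(\sft_k)$. The divergence relation $\div(v)=L(v)+R(v)$ is satisfied automatically in the Hirzebruch setting, because every floor carries the same pair $L(v),R(v)$ read off from the two slanted sides of $\Delta_{aE+bF}$, and (i) ensures the correct global counts of top and bottom ends.

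The two operations are mutually inverse essentially by construction: both preserve the labelling of floors by their order along the chain, and each end is characterized by the floor it attaches to, recorded in its skip index. The subtle point I would have to make explicit is that conditions (ii) and (iii) really \emph{exhaust} the order-theoretic constraints imposed by a marking. This reduces to the observation that an end and a bounded elevator are comparable in the partial order only through a floor they share, so the single vertex-adjacency condition controlled by (ii)/(iii) is enough to ensure that $\mfk$ extends to an increasing bijection on the whole of $E(\D_W)\cup V(\D_W)$.
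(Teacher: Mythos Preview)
Your proof is correct and follows essentially the same approach as the paper: both argue the forward direction by reading off the chain structure of a genus~$0$ diagram with totally ordered floors, and construct the inverse by attaching each $\sfb_k$ to $v_{k+1}$ and each $\sft_k$ to $v_{a-k}$, with weights on the bounded elevators determined by balancing. Your final paragraph, making explicit why conditions (ii)--(iii) exhaust the order constraints on the ends, is a welcome clarification that the paper leaves implicit.
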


\begin{proof}
\begin{enumerate}[label=(\roman*)]
    \item The diagram has $a$ floors and they are totally ordered, so that each floor is linked to the next one by a unique elevator. Thus forgetting about $\sft_*$ and $\sfb_*$, we get $\sff\sfe\sff\cdots\sff\sfe\sff$. The numbers of floors as well as the number of ends in each direction are fixed by the class $aE+bF$.
    \item In the word $\sff^p \sfb_k \sff^{a-p}$, the marking of the end encoded by $\sfb_k$ lies between the floors $p$ and $p+1$. Thus, the end being a bottom end, it skips at least the $p$ first floors and is attached to a floor after the $(p+1)$-th floor, so that $k\geqslant p$.
    \item  The reasoning is the same but with top ends instead of bottom ends.
\end{enumerate}

\medskip

For the converse construction, let $\sfW$ be a word satisfying (i)-(iii). We start with the ordered graph having $a$ vertices, each linked to the next one by a unique edge, and with a marking. For each $\sfb_j$ (resp. $\sft_j$) we insert a bottom (resp. top) end attached to the floor $j+1$ (resp. $a-j$) with a marking lying at the corresponding place in the word. There is a unique way to add weights to the bounded edges so that the diagram is balanced. Condition (i) ensures that the diagram has the right number of floors and ends, and the conditions (ii) and (iii) ensures that it is possible to place the marking of an end on the latter.
\end{proof}

\subsubsection{Words and codegrees.} We define the codegree function on $\WWW_{aE+bF}$ so that the codegree of a word matches the codegree of the associated diagram. Let $\WWW$ be the set of all words on the considered alphabet, which is a monoid. The codegree function is actually the restriction of the following morphism of monoids:
\[  \begin{array}{cccc} \codeg : &   \WWW & \longrightarrow & \NN \\
      & \sft_j,\ \sfb_j & \longmapsto & j \\
     & \sfe,\ \sff & \longmapsto & 0 
\end{array} \]
and by construction we have $\codeg(\D)=\codeg(W(\D))$.

\begin{rem}
The definition of $\WWW_{aE+bF}$ allows the letters $\sfb_*$ and $\sft_*$ to interlace, meaning there might be a $\sfb_*$ after a $\sft_*$. However, if a $\sfb_*$ lies after a $\sft_*$ then all floors are skipped by at least one of these two ends so that $\codeg(W(\D))\geqslant a$. If we restrict to words of codegree at most $i$ and if $a>i$, then this situation does not appear.
\end{rem}

The following lemma describes the shape of the words that have a bounded codegree provided the class is large enough.

\begin{lem}\label{lem-BT-words-if-bounded-codegree}
Assume $i\geqslant 1$ and $a>2i$. The words in $\WWW_{aE+bF}$ of codegree at most $i$ are of the following form:
    \begin{align*}
         & \ \sfB_0\left[\prod_{j=1}^i \sff\sfB^{(1)}_j\sfe\sfB^{(2)}_j\right](\sff\sfe)^{a-2i}\left[\prod_{j=1}^i \sff\sfT^{(1)}_{i+1-j}\sfe\sfT^{(2)}_{i+1-j}\right]\sff\sfT_0 \\
        =& \ \sfB_0\sff\sfB_1^{(1)}\sfe\sfB_1^{(2)}\sff\sfB_2^{(1)}\sfe\sfB_2^{(2)}\cdots \sff\sfe\sff\sfe\sff\sfe\cdots \sfT_2^{(1)}\sfe\sfT_2^{(2)}\sff\sfT_1^{(1)}\sfe\sfT_1^{(2)}\sff\sfT_0,
    \end{align*}
where $\sfB_0, \sfB_j^{(k)}$ (resp. $\sfT_0, \sfT_j^{(k)}$) are words in the letters $\{\sfb_*\}_{* \geq 0}$ and $\{\sfb_*\}_{*\geq j}$ (resp. $\{\sft_*\}_{*\geq 0}$ and $\{\sft_*\}_{*\geq j}$).
\end{lem}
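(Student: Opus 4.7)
The plan is to exploit the positional constraints (ii) and (iii) of Proposition \ref{prop-corresp-word-diag-genus-0-hirzebruch} together with the fact that the codegree function is additive over letters and that every letter $\sfb_k$ or $\sft_k$ contributes a non-negative amount equal to its index. In particular, given a word $W \in \WWW_{aE+bF}$ with $\codeg(W) \leq i$, each $\sfb_k$ or $\sft_k$ appearing in $W$ must satisfy $k \leq i$, so large indices are a priori forbidden.

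I then combine this observation with condition (ii): a letter $\sfb_k$ preceded by $p$ copies of $\sff$ satisfies $p \leq k \leq i$, so every $\sfb_*$ sits in one of the $2i+1$ slots delimited by the first $i$ copies of $\sff$ — namely the slot before $\sff_1$ and, for each $j = 1, \dots, i$, the two slots adjacent to $\sfe_j$ (one between $\sff_j$ and $\sfe_j$, one between $\sfe_j$ and $\sff_{j+1}$). A symmetric argument using condition (iii) shows that every $\sft_*$ letter lives in one of the $2i+1$ symmetric slots delimited by the last $i$ copies of $\sff$. Because $a > 2i$, these two zones are disjoint, leaving a central block made of purely alternating $\sff$'s and $\sfe$'s in which no $\sfb_*$ nor $\sft_*$ appears.

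A slot-by-slot inspection then identifies which indices may occur in each slot. In the slot preceding $\sff_1$ one has $p = 0$, so any letter $\sfb_k$ is admissible, and together these form the sub-word $\sfB_0$. In the two slots adjacent to $\sfe_j$ one has $p = j$, so only letters $\sfb_k$ with $k \geq j$ may occur, yielding the sub-words $\sfB_j^{(1)}$ and $\sfB_j^{(2)}$. The symmetric discussion on the top produces $\sfT_0$ and $\sfT_j^{(1)}, \sfT_j^{(2)}$; the reindexing $i + 1 - j$ in the displayed formula simply records that condition (iii) counts the $\sff$'s to the right of the letter rather than to the left. Assembling all these pieces in order with the rigid alternating $\sff\sfe$ skeleton yields the claimed normal form, and conversely any word of this form is manifestly in $\WWW_{aE+bF}$.

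I expect the only real subtlety to be the bookkeeping at the seams between the bottom zone, the middle block, and the top zone — in particular verifying that the total counts of $\sff$'s and $\sfe$'s indeed match those imposed by the class $aE + bF$, and handling the degenerate cases when $a$ is just above $2i$ or when some of the sub-words $\sfB_j^{(\ell)}$, $\sfT_j^{(\ell)}$ happen to be empty. Beyond this combinatorial accounting, no further structural ideas are required.
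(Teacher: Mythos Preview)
Your proposal is correct and follows essentially the same argument as the paper's proof, just spelled out in more detail. The paper's proof is a one-liner: a letter $\sfb_*$ placed after the first $i+1$ letters $\sff$ has index at least $i+1$ by condition~(ii), hence contributes at least $i+1$ to the codegree, which is forbidden; symmetrically for $\sft_*$. Your slot-by-slot analysis and the identification of the admissible indices in each $\sfB_j^{(k)}$ are exactly the content of this observation, and the bookkeeping you flag at the end is routine.
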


\begin{proof}
As a letter $\sfb_*$ put after the first $i+1$ letters $\sff$ contributes at least $i+1$ to the codegree, it cannot appear if the latter is assumed to be smaller than $i$, and similarly for $\sft_*$ letters.
\end{proof}

Basically, the word has a core $(\sff\sfe)^{a-1}f$ and we insert a word in the letters $\sfb_*$ (called $B$-word) between each of the $2i$ consecutive letters on the left, a word in the letters $\sft_*$ (called $T$-word) similarly on the right. As the roles of $B$-words and $T$-words are symmetric, we call them ``end-words''. We denote by $\S$ the set of sentences, \ie of families of end-words in $\sfs_*$ where $\sfS,\sfs$ are meant to be replaced by $\sfT,\sft$ or $\sfB,\sfb$:
$$\S=\{ (\sfS_0,\sfS_1^{(1)},\sfS_1^{(2)},\dots,\sfS_i^{(1)},\sfS_i^{(2)})\ |\ i\geqslant 0,\ \sfS_j^{(k)} \text{ word in }\{\sfs_*\}_{*\geqslant j} \}.$$
It is endowed with functions
\begin{align*}
    \codeg : &\ \S \longrightarrow \NN , \\
    \ell_0,\ \ell_j^{(k)} : &\ \S \longrightarrow \NN , \\
    \ell : &\ \S \longrightarrow \NN , 
\end{align*}
that associate to a sentence in $\S$ the sum of the codegrees of its words, the length of the words $\sfS_0$ and $\sfS_j^{(k)}$ (maybe $0$), and the sum of their lengths. For $n\geq0$ we denote by $\S(n)$ the set of sentences with total length $n$.

Lemma \ref{lem-BT-words-if-bounded-codegree} asserts that choosing a word in $\WWW_{aE+bF}$ having codegree at most $i$ and with $a,b$ large enough amounts to choose :
\begin{itemize}
    \item an element $\bfk \in \S(b)$ that encodes the $B$-words,
    \item an element $\tfk \in \S(b+\delta a)$ that encodes the $T$-words,
\end{itemize}
such that $\codeg(\tfk)+\codeg(\bfk)\leq i$.
Essentially, elements of $\S(b)$ and $\S(b+\delta a)$ tell us how to construct half-diagrams which are glued back together. Hence, the computation of a generating series over $\WWW_{aE+bF}$ will split into the computations of some generating series over $\S(b)$ and $\S(b+\delta a)$.

\begin{defi}
We define the \emph{multiplicity} of a sentence $\sfk \in \S(n)$ to be
\[ \mu_{\S(n)}(\sfk)=(1-x)^n x^{\codeg(\sfk)} . \]
\end{defi}

\subsubsection{Enumeration of words.} We now compute the generating series of sentences with their multiplicity.

\begin{lem}\label{lem-multiplicity-asymptotic-word-genus-0-hirzebruch}
Assume $i\geqslant 1$ and $a,b>2i$. The multiplicity modulo $x^{i+1}$ of the diagram $\D$ encoded by a word $\sfW\in\WWW_{aE+bF}$ is $(1-x)^{2b+\delta a}x^{\codeg(\sfW)}$.
\end{lem}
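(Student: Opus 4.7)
The plan is to exploit the hypothesis that under $a,b>2i$ and $\codeg(\sfW)\leq i$, every bounded edge of the associated diagram $\D$ has weight at least $i+1$, which makes the quantum factors $(1-x^{w(e)})^2$ trivial modulo $x^{i+1}$.

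First I would determine the weights of the bounded edges. In genus $0$ with totally ordered floors there is exactly one bounded edge $e_m$ between floors $v_m$ and $v_{m+1}$ for $m=1,\dots,a-1$. For the ``baseline'' word $(\sff\sfe)^{a-1}\sff$ with all bottom ends attached to $v_1$ and all top ends to $v_a$ (i.e.\ codegree $0$), balancing at each floor of $\FF_\delta$ forces $w(e_m)=\omega_m=b+m\delta$, so in particular $\omega_m\geq b$. For a general $\sfW\in\WWW_{aE+bF}$, each letter $\sfb_j$ (resp.\ $\sft_j$) encodes a bottom (resp.\ top) end attached to a floor $j$ levels above $v_1$ (resp.\ below $v_a$); by balance, it decreases by one the weight of each of the $j$ consecutive edges it crosses. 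Writing $w(e_m)=\omega_m-r_m$ with $r_m\geq 0$, one has
\[\sum_{m=1}^{a-1}r_m=\sum_{j\geq 1}j\,(u^\top_j+u^\bot_j)=\codeg(\D)=\codeg(\sfW),\]
where the last equality is the definition of the codegree morphism on $\WWW$.

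Next, using $r_m\leq\codeg(\sfW)\leq i$ together with $b>2i$, we get
\[w(e_m)=b+m\delta-r_m\geq b-i\geq i+1.\]
Since $(1-x^{w(e_m)})^2=1-2x^{w(e_m)}+x^{2w(e_m)}$ has every nonconstant term of degree at least $w(e_m)\geq i+1$, each factor is congruent to $1$ modulo $x^{i+1}$, and so is the product over the $a-1$ bounded edges.

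Finally, substituting $b^\top+b^\bot=2b+\delta a$ into the multiplicity formula of Proposition \ref{prop-new-mult} gives
\[\mu(\D)=x^{\codeg(\D)}(1-x)^{2b+\delta a}\prod_{m=1}^{a-1}(1-x^{w(e_m)})^2,\]
and the difference with $x^{\codeg(\sfW)}(1-x)^{2b+\delta a}$ has all terms of degree at least $\codeg(\sfW)+(i+1)\geq i+1$, which yields the claimed congruence modulo $x^{i+1}$. The only mildly delicate step is the bookkeeping identity $\sum_m r_m=\codeg(\sfW)$, but this is just the translation between the geometric codegree (the parallelogram area complementing the degree in $\mathrm{Area}(\Delta_\beta^\delta)$) and the combinatorial codegree defined on $\WWW$; no serious obstacle appears.
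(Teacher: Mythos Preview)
Your proof is correct and follows essentially the same approach as the paper: both show that under the hypotheses every bounded edge has weight strictly larger than $i$, so that each factor $(1-x^{w(e)})^2$ is congruent to $1$ modulo $x^{i+1}$. The paper simply invokes Lemma~\ref{lem-poids-grands} (with $M=i$, $g=0$) to obtain this weight bound, whereas you re-derive it directly via the explicit formula $\omega_m=b+m\delta$ together with the bookkeeping identity $\sum_m r_m=\codeg(\sfW)$; this is the same content, just unpacked.

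One small omission: you tacitly assume $\codeg(\sfW)\leqslant i$ throughout. The paper explicitly disposes of the case $\codeg(\sfW)>i$ first (both sides are $\equiv 0$ modulo $x^{i+1}$ because of the factor $x^{\codeg(\sfW)}$). You should add that sentence, since otherwise your bound $r_m\leqslant\codeg(\sfW)\leqslant i$ is unjustified.
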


\begin{proof}
By definition, the multiplicity is
$$(1-x)^{2b+\delta a}x^{\codeg(\sfW)}\prod_e (1-x^{w(e)})^2,$$
where the product is over the bounded edges $e$ of $\D$. Assume $\codeg(\sfW)\leqslant i$, otherwise there is nothing to prove since we get $0$ modulo $x^{i+1}$. By Lemma \ref{lem-poids-grands}, the unique edge between two consecutive floors has weight bigger than $i$. Thus, $(1-x^{w(e)})^2\equiv 1\modulo x^{i+1}$.
\end{proof}

\begin{lem}\label{lem-generating-series-T-words}
Let $n>i\geqslant 1$. The generating series of length $n$ sentences counted with their multiplicity is
\[ (1-x)^n \sum_{\sfk\in\S(n)} x^{\codeg(\sfk)} \equiv p(x)^2 \modulo x^{i+1}.\]
\end{lem}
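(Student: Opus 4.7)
The plan is to translate the sum into a generating-function computation by introducing an auxiliary variable $y$ that tracks the total length $n$. Consider
\[
\Phi(x,y) = \sum_{n\geq 0} y^n \left[(1-x)^n \sum_{\sfk\in\S(n)} x^{\codeg(\sfk)}\right],
\]
so that the lemma becomes the claim $[y^n]\Phi(x,y)\equiv p(x)^2 \modulo x^{i+1}$ for $n>i$. Since both length and codegree are additive across the slots $\sfS_0,\sfS_1^{(1)},\sfS_1^{(2)},\ldots$ of a sentence, the sum defining $\Phi$ factorises as a product indexed by slots.

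The key per-slot calculation is that for a length-$\ell$ word in $\{\sfs_*\}_{*\geq j}$, summing $x^{\codeg}$ gives $(x^j/(1-x))^\ell$, and the matching factor $(1-x)^\ell$ coming from the global $(1-x)^n$ cancels the denominator exactly, leaving $x^{j\ell}$. Summing over $\ell\geq 0$ with marker $y$ yields $\frac{1}{1-x^j y}$ per slot. Assembling the slot $\sfS_0$ (which contributes $\frac{1}{1-y}$) and the two slots $\sfS_j^{(1)},\sfS_j^{(2)}$ for each $j\geq 1$ gives
\[
\Phi(x,y) = \frac{1}{1-y}\prod_{j\geq 1}\frac{1}{(1-x^j y)^2},
\]
a well-defined element of $\ZZ[\![x]\!][\![y]\!]$ since each bidegree $(c,n)$ in $(x,y)$ is reached by only finitely many factors.

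To extract the coefficient of $y^n$ modulo $x^{i+1}$, I would first truncate the infinite product: as $(1-x^j y)^{-2}\equiv 1\modulo x^{i+1}$ for $j>i$, we may replace the product by $\prod_{j=1}^i(1-x^j y)^{-2}$. Expanding $\frac{1}{1-y}=\sum_m y^m$ then gives
\[
[y^n]\Phi(x,y) \equiv \sum_{m=0}^n [y^m]\prod_{j=1}^i\frac{1}{(1-x^j y)^2} \modulo x^{i+1}.
\]
The decisive observation is that every monomial $x^c y^m$ appearing in this truncated product satisfies $c\geq m$: expanding as a sum over tuples $(\ell_j^{(k)})$ of nonnegative integers, one gets monomials $x^{\sum j\ell_j^{(k)}}y^{\sum \ell_j^{(k)}}$, and $j\geq 1$ forces the inequality. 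Hence the $[y^m]$-coefficient vanishes modulo $x^{i+1}$ for every $m>i$.

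Under the hypothesis $n>i$, this means that extending the summation to all $m\geq 0$ changes nothing modulo $x^{i+1}$, and the extended sum is exactly $\prod_{j=1}^i (1-x^j)^{-2}$, obtained by formally setting $y=1$. This in turn agrees with $p(x)^2$ modulo $x^{i+1}$, since $(1-x^j)^{-2}\equiv 1\modulo x^{i+1}$ for $j>i$. The main non-mechanical step is the bidegree inequality $\deg_x\geq \deg_y$ that licenses the truncation; once it is observed, everything else is a routine formal-series manipulation.
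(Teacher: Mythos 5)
Your proof is correct and takes essentially the same route as the paper's: the per-slot observation that $(1-x)^\ell \sum x^{\codeg}$ over length-$\ell$ words in $\{\sfs_*\}_{*\geq j}$ collapses to $x^{j\ell}$, and the observation that sentences whose non-initial words are too long contribute $0 \modulo x^{i+1}$, are precisely the two steps in the paper's argument. The auxiliary variable $y$ tracking total length is a cleaner bookkeeping device for what the paper phrases as ``fix the lengths $(l_0,l_j^{(k)})$, note $l_0$ is determined by the others, and extend the summation range,'' but the mathematical content is identical.
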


\begin{proof}
As we are looking at an equality modulo $x^{i+1}$, we only care about the elements of $\S(n)$ with codegree at most $i$ since the others elements contribute $0$. In particular, each sentence contains at most $2i+1$ words and the letters involved in each word can only be in $\{\sfs_*\}_{0\leqslant\ast\leqslant i+1}$, so that the sum on the left is well-defined modulo $x^{i+1}$.

Let's fix $(l_0,l_j^{(k)})_{\substack{1\leqslant j\leqslant n \\ k=1,2}}$ a family of integers such that $n=l_0+\sum_{j,k} l_j^{(k)}$, and look at sentences $\sfk = (\sfS_0,\sfS_1^{(1)},\dots,\sfS_n^{(2)})$ with $\ell_0(\sfk) = l_0$ and $\ell_j^{(k)}(\sfk) = l_j^{(k)}$. The sum of the multiplicities of such sentences is
\[ (1-x)^n \left( \dsum_{\ell(\sfS_0)=l_0} x^{\codeg(\sfS_0)} \right) \times \dprod_{j,k} \left( \dsum_{\ell(\sfS_j^{(k)}) = l_j^{(k)}} x^{\codeg(\sfS_j^{(k)})} \right). \]
Letters in $\sfS_0$ (resp. $\sfS_j^{(k)}$) can take values in $\{\sfs_*\}_{*\geqslant 0}$ (resp. $\{\sfs_*\}_{*\geqslant j}$), so one has 
\[ \begin{array}{rl}
     & \dsum_{\ell(\sfS_0)=l_0} x^{\codeg(\sfS_0)} = \left(\dsum_{k\geq0} x^k \right)^{l_0} = \left(\dfrac{1}{1-x}\right)^{l_0} \\
    \text{and} & \dsum_{\ell(\sfS_j^{(k)})=l_j^{(k)}} x^{\codeg(\sfS_j^{(k)})} = \left(\dsum_{k\geq j} x^k \right)^{l_j^{(k)}} = \left(\dfrac{x^j}{1-x}\right)^{l_j^{(k)}} 
\end{array} \]
and the sum of the multiplicities of the sentences with fixed lengths equal to $(l_0,l_j^{(k)})_{j,k}$ is
\[ (1-x)^n \left(\dfrac{1}{1-x}\right)^{l_0} \dprod_{j,k} \left(\dfrac{x^j}{1-x}\right)^{l_j^{(k)}} = \prod_{j,k} x^{jl_j^{(k)}}. \]

It remains to sum over all the possible choices of $(l_0,l_j^{(k)})_{j,k}$. Because the total length of the sentences is $n$, we can forget about $l_0$ since it is fully determined by the $l_j^{(k)}$. Moreover we can sum over $l_j^{(k)} \geq 0$ instead of $\sum l_j^{(k)} = n$ because the excess terms will contribute 0 modulo $x^n$. Therefore, we get
\[ \sum_{(l_0,l_j^{(k)})} x^{\sum jl_j^{(k)}} = \prod_{j,k} \sum_{l_j^{(k)}\geq0} x^{jl_j^{(k)}} =  \left(\prod_{j=1}^n \frac{1}{1-x^j}\right)^2 \\
\equiv p(x)^2 \modulo x^n.
\]
\end{proof}

\begin{theo}\label{theo-AR-genus0-hirzebruch}
The genus $0$ asymptotic refined invariant of the Hirzebruch surface $\FF_\delta$ is
\[ AR_{0}^{\FF_\delta}= p(x)^4, \]
where $p(x)$ is the generating series of partition numbers.
\end{theo}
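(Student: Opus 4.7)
The plan is to assemble the result from the preceding lemmas via the word encoding, and then pass to the limit $i\to\infty$. Fix $i\geq 1$ and choose $\beta=aE+bF$ with $a,b>2i$. By Proposition \ref{prop-new-mult} together with the bijection of Proposition \ref{prop-corresp-word-diag-genus-0-hirzebruch}, one can rewrite
\[
\widetilde{BG}_0^{\FF_\delta}(\beta) \equiv \sum_{\sfW\in\WWW_{aE+bF}} \mu(\D_\sfW) \pmod{x^{i+1}},
\]
where $\D_\sfW$ is the marked floor diagram associated to $\sfW$, and where diagrams whose floors are not totally ordered are discarded since by Lemma \ref{lem-poids-grands} they contribute $0$ modulo $x^{i+1}$ when $a$ is large enough.

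Next I would simplify the multiplicity. By Lemma \ref{lem-multiplicity-asymptotic-word-genus-0-hirzebruch}, all interior-edge factors $(1-x^{w(e)})^2$ are congruent to $1$ modulo $x^{i+1}$, so that $\mu(\D_\sfW)\equiv (1-x)^{2b+\delta a} x^{\codeg(\sfW)} \pmod{x^{i+1}}$. Now Lemma \ref{lem-BT-words-if-bounded-codegree} lets me factor each word of codegree at most $i$ into a $B$-sentence $\bfk\in\S(b)$ and a $T$-sentence $\tfk\in\S(b+\delta a)$, with codegree additive under this splitting. Rearranging the factor $(1-x)^{2b+\delta a}=(1-x)^{b}(1-x)^{b+\delta a}$ and distributing across the two sentences, I obtain
\[
\widetilde{BG}_0^{\FF_\delta}(\beta) \equiv \Bigl((1-x)^b\sum_{\bfk\in\S(b)} x^{\codeg(\bfk)}\Bigr)\Bigl((1-x)^{b+\delta a}\sum_{\tfk\in\S(b+\delta a)} x^{\codeg(\tfk)}\Bigr) \pmod{x^{i+1}}.
\]

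Applying Lemma \ref{lem-generating-series-T-words} to each factor, both brackets reduce to $p(x)^2$ modulo $x^{i+1}$ (using $b>i$ and $b+\delta a>i$, which is guaranteed by the hypothesis $a,b>2i$). Hence $\widetilde{BG}_0^{\FF_\delta}(\beta)\equiv p(x)^4 \pmod{x^{i+1}}$, and since this congruence holds for every sufficiently ample $\beta$, we conclude $AR_0^{\FF_\delta}=p(x)^4$ by letting $i\to\infty$ in $\ZZ[\![x]\!]$.

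The only delicate point is book-keeping: verifying that the $(1-x)$-exponent produced by the ends matches exactly the sum of the lengths of the $B$- and $T$-sentences (this is why $\beta=aE+bF$ gives $b$ bottom ends and $b+\delta a$ top ends), and that the codegrees indeed decouple so that the double sum factors. Once the accounting is in place, the theorem follows immediately from the two lemmas. No genuine computation remains beyond what has already been assembled.
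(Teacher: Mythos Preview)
Your proposal is correct and follows essentially the same approach as the paper's proof: encode marked diagrams by words, split each word into a $B$-sentence in $\S(b)$ and a $T$-sentence in $\S(b+\delta a)$ via Lemma \ref{lem-BT-words-if-bounded-codegree}, use Lemma \ref{lem-multiplicity-asymptotic-word-genus-0-hirzebruch} to reduce the multiplicity, and then apply Lemma \ref{lem-generating-series-T-words} to each factor. The only cosmetic difference is that you make explicit the appeal to Lemma \ref{lem-poids-grands} to discard diagrams without totally ordered floors, whereas the paper handles this implicitly in the discussion preceding the word encoding.
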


\begin{proof}
We can determine $AR_{0}^{\FF_\delta} \mod x^{i+1}$ by summing the multiplicities of the words of $\WWW_{aE+bF}$ of codegree at most $i$, and with $a,b>2i$. According to Lemma \ref{lem-BT-words-if-bounded-codegree}, choosing a word of codegree at most $i$ amounts to choose sentences $\bfk\in\S(b)$ and $\tfk\in\S(b+\delta a)$ with $\codeg(\bfk) + \codeg(\tfk) \leq i$. 
Lemma \ref{lem-multiplicity-asymptotic-word-genus-0-hirzebruch} ensures that the multiplicity of the word is
\[ (1-x)^{b} x^{\codeg(\bfk)}(1-x)^{b+\delta a} x^{\codeg(\tfk)}. \]
Hence, summing over $\S(b) \times \S(b+\delta a)$ (and potentially counting terms which contribute 0 modulo $x^{i+1}$) the generating series factors modulo $x^{i+1}$ :
\[  \left((1-x)^{b} \sum_{\bfk\in\S(b)} x^{\codeg(\bfk)}\right)
    \left((1-x)^{b+\delta a}\sum_{\tfk\in\S(b+\delta a)} x^{\codeg(\tfk)}\right) .\]
Using Lemma \ref{lem-generating-series-T-words} we get the result modulo $x^{i+1}$ for any $i$, and we conclude.
\end{proof}

    \subsection{The case of $h$-transverse toric surfaces}
\label{sec-h-transverse-genus-0}
We now consider the case of a toric surface $X$ associated to a $h$-transverse, horizontal and non-singular polygon $\Delta$. Let $\beta\in H_2(X,\ZZ)$ be the corresponding homology class. Given a primitive vector $\alpha$ that positively generates a ray of the dual fan of $\Delta$, we denote by $D_\alpha$ the corresponding toric divisor.

\subsubsection{Words and codegree for $h$-transverse polygons.}
The main difference with the Hirzebruch case, is that marked floor diagrams are modified by incorporating the data $(L,R)$, \ie assigning a pair of integers called \textit{sloping pair} to each floor. According to \cite[Section 3]{brugalle2020polynomiality}, the codegree coming from the sloping pairs is
\[ \codeg(L,R)=\sum_{\substack{ v\prec v' \\ \text{s.t. } L(v)>L(v') } }(L(v)-L(v')) + \sum_{\substack{v \prec v' \\ \text{s.t. } R(v)>R(v') }}(R(v)-R(v')). \]
Elements in each of the sums are called \textit{inversions}. In particular, the contribution to the codegree is $0$ if $L$ and $R$ are increasing.

\medskip

To enable the word approach to treat the case of $h$-transverse polygons, we need to add a sloping pair to each floor. We now consider the alphabet $\{\sfe,\sff_{*,*},\sft_*,\sfb_*\}$ where the indices of $\sff_{*,*}$ are the members of the sloping pair. Similarly to Proposition \ref{prop-corresp-word-diag-genus-0-hirzebruch}, we have the following lemma that relates words to diagrams.

\begin{prop}\label{prop-corresp-word-diag-genus-0-general}
Let $\D$ be a marked floor diagram in the class $\beta\in H_2(X,\ZZ)$. Then the word $W(\D)$ satisfies the following.
    \begin{enumerate}[label=(\roman*)]
        \item Forgetting about $\sfb_*$, $\sft_*$ and indices of $\sff_{\ast,\ast}$, the word is $(\sff\sfe)^{a-1}\sff$. Moreover, there are $b^\top$ letters $\sft_*$ and $b^\bot$ letters $\sfb_*$.
        \item[(ii)-(iii)] from Proposition \ref{prop-corresp-word-diag-genus-0-hirzebruch} are still satisfied.
        \item[(iv)] If $k\in b_{\text{left}}(\Delta_\beta)$ (resp. $b_{\text{right}}(\Delta_\beta)$), the number of appearances of $k$ as a $L$-value (resp. $R$-value) in the sloping pairs is $\beta \cdot D_\alpha$, where $\alpha=(-1,k)$ (resp. $(1,k)$).
    \end{enumerate}
We denote by $\WWW_\beta$ the set of words satisfying the above conditions. Given a word $\sfW\in\WWW_\beta$, there is a unique way to recover a marked floor diagram in the class $\beta$ potentially with negative weights.
\end{prop}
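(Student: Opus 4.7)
The plan is to extend the proof of Proposition \ref{prop-corresp-word-diag-genus-0-hirzebruch} by carrying the sloping pair data $(L(v),R(v))$ on each floor, so that the only real novelty is the bookkeeping coming from the $\sff_{*,*}$ alphabet and condition (iv). The construction $\D\mapsto W(\D)$ is exactly as before: read the marked points of $\D$ in increasing order and record the letter determined by the cell supporting each mark, with the refinement that a mark on a floor $v$ now produces $\sff_{L(v),R(v)}$.

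For the verification, conditions (ii) and (iii) are identical to the Hirzebruch setting, since their statements only concern the relative position of an end-letter with respect to the $\sff$-letters: a bottom end whose mark lies between the floors of rank $p$ and $p+1$ must skip at least the first $p$ floors, and symmetrically for top ends. Condition (i) is immediate: each consecutive pair of floors is joined by exactly one elevator (floors are totally ordered in the asymptotic regime), the total counts of top and bottom ends are $b^\top(\Delta_\beta)$ and $b^\bot(\Delta_\beta)$, and forgetting subscripts leaves the core $(\sff\sfe)^{a-1}\sff$. Condition (iv) is then a direct reformulation of the definition of the maps $L$ and $R$ in the definition of a floor diagram: $L$ (resp. $R$) is a bijection from $V(\D)$ onto the multiset $b_{\text{left}}(\Delta_\beta)$ (resp. $b_{\text{right}}(\Delta_\beta)$), and by definition the multiplicity of $k$ in $b_{\text{left}}(\Delta_\beta)$ equals the integer length of the side of $\Delta_\beta$ with outgoing normal $(-1,k)$, which is precisely $\beta\cdot D_{(-1,k)}$; the $R$-case is symmetric.

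For the converse, given $\sfW\in\WWW_\beta$ I would first read off the underlying combinatorial skeleton: $a$ floors $v_1\prec\cdots\prec v_a$ linked by $a-1$ elevators, each $v_m$ decorated with the sloping pair $(L(v_m),R(v_m))$ recorded by the corresponding $\sff_{*,*}$-letter. Each letter $\sfb_j$ (resp. $\sft_j$) appearing after (resp. before) exactly $p$ of the $\sff$-letters yields a bottom (resp. top) end attached to $v_{p+1+j}$ (resp. $v_{a-p-j}$), with marking inherited from the position of the letter in the word; conditions (ii), (iii) guarantee these attachments exist. Condition (iv) ensures the resulting data realize the class $\beta$ on the nose. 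Finally, the weights of the $a-1$ bounded edges are uniquely determined by enforcing the balancing relation $\div(v_m)=L(v_m)+R(v_m)$ sequentially from the bottom floor upwards, since each new equation introduces exactly one unknown weight.

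The only step where care is required, and what I expect to be the sole obstacle, is the last one: unlike the Hirzebruch case, where all floors share the same divergence and the forced weights are automatically positive, here nothing a priori prevents the balancing recurrence from producing negative weights on elevators. This is precisely why the statement is phrased with the caveat ``potentially with negative weights''; the bijective correspondence is between $\WWW_\beta$ and marked floor-like diagrams in class $\beta$ with a linear order on the floors, and genuine (positively weighted) floor diagrams then correspond to the subset of words for which the recurrence stays in $\ZZ_{>0}$.
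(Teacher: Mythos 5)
Your approach is the same as the paper's: items (i)--(iii) are inherited verbatim from Proposition \ref{prop-corresp-word-diag-genus-0-hirzebruch}, item (iv) is read directly off the fact that $L$ and $R$ are bijections onto the multisets $b_{\text{left}}(\Delta_\beta)$ and $b_{\text{right}}(\Delta_\beta)$, and the converse reconstruction proceeds as in the Hirzebruch case with the sole new caveat that the balancing equations may force non-positive elevator weights.

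There is, however, a concrete error in your description of the reconstruction. You write that a letter $\sfb_j$ appearing after $p$ of the $\sff$-letters produces a bottom end attached to $v_{p+1+j}$ (and $\sft_j$ to $v_{a-p-j}$). The correct attachment is $v_{j+1}$ (resp.\ $v_{a-j}$), \emph{independently of} $p$: the alphabet index $j$ records the \emph{total} number of floors skipped by that end, which is precisely what makes $\codeg(\sfb_j)=j$ agree with the codegree contribution of the corresponding end to the diagram. With your formula, the end would skip $p+j$ floors, so the identity $\codeg(W(\D))=\codeg(\D)$ would fail and the construction would not invert $\D\mapsto W(\D)$. Relatedly, conditions (ii)--(iii) do not say that ``these attachments exist''; rather, the inequality $j\geq p$ is exactly the compatibility needed so that a mark sitting between the $p$-th and $(p+1)$-th floors can be placed on an end that is attached to $v_{j+1}$, i.e.\ the mark lies below its target floor (and symmetrically for top ends). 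Apart from this slip, the argument is sound and follows the paper's route.
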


\begin{proof}
The proof of the first three points is verbatim to those of Proposition \ref{prop-corresp-word-diag-genus-0-hirzebruch}. The last one results from the definition of sloping pairs. For the converse construction, we also proceed as in Proposition \ref{prop-corresp-word-diag-genus-0-hirzebruch}. The difference is that when adding the weights of the elevators, we may obtain negative or zero weights. 
\end{proof}

\begin{rem}
    During the reconstruction, the weights that appear may be negative. However, for the words of $\WWW_\beta$ that we will consider all the weights are positive, see Lemma \ref{lem-sloping-pairs-if-bounded-codegree}.
\end{rem}

In a word $\sfW$, we say that two letters $\sff_{\ell,r}$ and $\sff_{\ell',r'}$ appearing in that order form a \textit{left inversion} (resp. \textit{right inversion}) if $\ell>\ell'$ (resp. $r>r'$). The size of this inversion is the quantity $\ell-\ell'$ (resp. $r-r'$).

The codegree function on $\WWW_\beta$ is defined to match the codegree of the marked floor diagrams. The difference with the Hirzebruch case is that the codegree comes from the $T$-words and $B$-words,
but also from the sloping pairs:
$$ \begin{array}{crcl}\codeg :&   \WWW_\beta & \longrightarrow & \NN \\
    & \sfW & \longmapsto & \codeg(\mathrm{ft}(\sfW))+\codeg(L,R)
\end{array} $$
where $\mathrm{ft}(\sfW)$ is the word where we forget the indices of the letters $\sff_{\ast,\ast}$.

\medskip

Up to the indices of letters $\sff_{*,*}$, Lemma \ref{lem-BT-words-if-bounded-codegree} still applies for words in $\WWW_\beta$ under the hypothesis $a>2i$. 
We deal with the indices of letters $\sff_{*,*}$ in the following lemma.

\begin{lem}\label{lem-sloping-pairs-if-bounded-codegree}
Let $i\geqslant 1$ and assume that for each toric divisor $D$ we have $\beta\cdot D>2i$. If $\sfW\in\WWW_\beta$ has codegree at most $i$ then:
    \begin{enumerate}[label=(\roman*)]
        \item all the inversions in the sloping pairs are of size one, \ie correspond to consecutive sides of the polygon,
        \item two letters $\sff_{\ast,\ast}$ part of an inversion are separated by at most $i-1$ letters $\sff_{*,*}$,
        \item the weights of the elevators in the associated diagram are strictly bigger than $i$. In particular they are positive, so that $\sfW$ corresponds to a true marked diagram.
    \end{enumerate}
\end{lem}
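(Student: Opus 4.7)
The plan is to handle (iii) first as an immediate consequence of an earlier result, and then derive (i) and (ii) via inversion-counting arguments that exploit the tension between a small codegree budget and the abundance of floors for each left or right slope value. For (iii), I would invoke Lemma \ref{lem-poids-grands} with $g=0$ and $M=i$; its hypothesis $b^\top, b^\bot > M(g+1)+i = 2i$ is a special case of the assumption $\beta\cdot D > 2i$ for every toric divisor. Since in genus $0$ there is a unique elevator between any two consecutive floors, its weight is then forced to exceed $i$, in particular to be positive, which confirms that $\sfW$ corresponds to an honest marked diagram.

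For (i), I would argue by contradiction: suppose there is, say, an $L$-inversion of size $k := L(v)-L(v') \geq 2$ between floors $v \prec v'$ with $L(v) = \ell$ (the $R$ case is symmetric). The key preliminary is that, because $\Delta$ is $h$-transverse and non-singular, smoothness at each corner between two left sides forces consecutive values in $b_{\text{left}}(\Delta)$ to differ by exactly $1$; hence $b_{\text{left}}(\Delta)$ is supported on a contiguous interval of integers, so in particular the intermediate slope $\ell-1$ lies in $b_{\text{left}}(\Delta)$. By the hypothesis, there are more than $2i$ floors $u$ with $L(u)=\ell-1$. A short case analysis on the position of $u$ relative to $v$ and $v'$ (the three cases $u\prec v$, $v\prec u\prec v'$, $v'\prec u$) shows that each such $u$ produces an inversion of size $\geq 1$ with at least one of $v,v'$, yielding a total codegree strictly greater than $2i$, contradicting $\codeg(\sfW)\leq i$.

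For (ii), statement (i) ensures any inversion has size $1$; assume without loss of generality an $L$-inversion between $v\prec v'$ with $L(v)=\ell$ and $L(v')=\ell-1$. For any floor $u$ strictly between $v$ and $v'$ in the order, its $L$-value must lie in $\{\ell-1,\ell\}$: otherwise one would produce an inversion of size at least $2$ involving $v$ or $v'$, violating (i). In either subcase, exactly one of the pairs $(v,u)$ or $(u,v')$ is an inversion of size $1$. If $d$ denotes the number of intermediate floors, the codegree contribution from the inversion $(v,v')$ together with these $d$ new inversions is at least $d+1$, so $\codeg(\sfW)\leq i$ gives $d\leq i-1$. The main obstacle is the case analysis in (i), which relies crucially on $k\geq 2$ to ensure that $\ell-1$ is strictly sandwiched between $L(v')=\ell-k$ and $L(v)=\ell$, so that any floor carrying this slope value is guaranteed to force a new inversion regardless of its placement in the order.
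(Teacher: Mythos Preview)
Your argument is correct, with one small circularity worth flagging in (iii): Lemma \ref{lem-poids-grands} is stated for genuine floor diagrams (positive edge-weights), while here the point is precisely to establish positivity for the object reconstructed from $\sfW$. The fix is immediate---the proof of Lemma \ref{lem-poids-grands} never uses positivity, only the lower bound $\tomega_m\geqslant \omega_m-\codeg(\D)$---but you should say so explicitly. The paper sidesteps this by arguing from the codegree-$0$ configuration: the tuple $L$ (resp.\ $R$) is reached from the increasing tuple by at most $i$ adjacent transpositions, each transposition lowers exactly one elevator weight by $1$, and since the codegree-$0$ weights exceed $2i$ the result follows.

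For (i) the approaches genuinely differ. The paper uses the same transposition bookkeeping: with each slope value repeated more than $2i$ times, at most $i$ adjacent transpositions cannot carry a value past the entire block of an intermediate slope, so no inversion of size $\geqslant 2$ can arise. Your direct contradiction---picking the intermediate slope $\ell-1$ and showing every one of its $>2i$ carriers forces a new inversion with $v$ or $v'$---is more concrete and avoids the transposition language altogether. Both arguments implicitly use non-singularity to guarantee that the intermediate value $\ell-1$ actually lies in $b_{\text{left}}$; you make this explicit, which is a virtue. Your proof of (ii) is essentially the paper's.
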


\begin{proof}
We denote by $(L,R)$ the tuple of sloping pairs of $\sfW$. We first notice that the tuple $L$ (and similarly for $R$) differs from the unique tuple of increasing slopes by a finite number of transpositions that switches two consecutive elements. Indeed, this is true for the tuple of codegree 0 since in that case this tuple is increasing.

If we consider a tuple of positive codegree then there is a consecutive pair that forms an inversion; if not, the tuple would be increasing. Then, switching both members of the inversion decreases the codegree, and we conclude by induction.

Each transposition switching consecutive elements increases the codegree by at least $1$, so that if $\codeg(\sfW) \leq i$ then $L$ differs from the increasing tuple by at most $i$ transpositions.

As we assume the lengths of the sides of $\Delta_\beta$ to be bigger than $2i$, it is not possible to create an inversion of size bigger than $2$ with only $i$ transpositions, proving $(i)$.

Take an inversion $(\dots,k+1,\dots,k,\dots)$ with $i$ elements in-between. Any of these $i$ elements is either $k$ or $k+1$. If it is a $k$ it provides an inversion with the left $k+1$, and if it is a $k+1$ it provides an inversion with the right $k$. Hence we get at least $1+i$ inversion, which is impossible, proving $(ii)$.

Finally, for $(iii)$ if the codegree is $0$ then the assumption ensures that the weights of all the elevators are bigger than $2i$ by Lemma \ref{lem-poids-grands}. If not, each transposition decreases the weight of an edge by $1$. Thus, they remain strictly bigger than $i$ after $i$ transpositions.
\end{proof}

\subsubsection{Encoding the sloping pairs.}\label{sec-definition-pearl}

Proposition \ref{prop-corresp-word-diag-genus-0-general} states that the elements of the sloping pairs are assigned to the floors with some constraints. We use the following objects to encode these assignments. Let $\P$ be the set of non-constant sequences $\pfk\in\{\bullet,\circ\}^\ZZ$ up to reindexation by translation of the index such that the set of pairs
$$I(\pfk)=\{(k,l)\ |\ k<l,\pfk_k=\circ,\pfk_l=\bullet\},$$
is finite. These pairs are also called \textit{inversions}. We then set $\codeg(\pfk)=|I(\pfk)|$.

\begin{expl}\label{expl-pearl-chain}
We consider the following element, for which the first $\circ$ has index $0$:
$$\pfk=\cdots\bullet\bullet\circ\circ\bullet\circ\bullet\bullet\circ\bullet\circ\circ\circ\cdots.$$
Since 
\[ I(\pfk) =  \{ (0,7),(1,7),(3,7), (6,7), (0,5),(1,5),(3,5), (0,4),(1,4),(3,4),(0,2),(1,2) \} \]
      it has codegree 12.
\end{expl}

We notice that for each element $\pfk\in\P$, as it is non-constant it contains at least a $\circ$ and a $\bullet$. Since there is a finite number of pairs $\circ\prec\bullet$ (i.e. a pair $k<l$ with $\pfk_k=\circ$ and $\pfk_l=\bullet$), the sequences is asymptotically constant to $\circ$ near $+\infty$, and $\bullet$ near $-\infty$.

\begin{lem}\label{lem-generating-series-corners}
Let $i\geqslant 1$. There is a finite number of elements of $\P$ with codegree smaller than $i$, and one has
    \[ \sum_{\pfk\in\P}x^{\codeg(\pfk)}=p(x).\]
\end{lem}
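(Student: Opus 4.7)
The strategy is to exhibit a bijection $\Phi$ between $\P$ and the set of all integer partitions (including the empty partition $\emptyset$), under which $\codeg(\pfk)$ corresponds to the size $|\lambda|$ of the associated partition. Once such a bijection is in hand, the identity follows from the classical product expansion $p(x) = \sum_{\lambda} x^{|\lambda|} = \prod_{j \geq 1} (1-x^j)^{-1}$, and the finiteness statement is immediate since there are only finitely many partitions of size $\leq i$.

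The first step is to choose a canonical representative in each translation class. As noted in the excerpt, each $\pfk \in \P$ is eventually $\bullet$ on the left and eventually $\circ$ on the right, so it has a leftmost $\circ$; after translation I may place this leftmost $\circ$ at position $1$, so that $\pfk_k = \bullet$ for all $k \leq 0$ and $\pfk_1 = \circ$. Let $l_1 < l_2 < \cdots < l_m$ enumerate the positions $\geq 2$ carrying a $\bullet$; there are finitely many such positions since each contributes to $I(\pfk)$. For each $i$, let $s_i$ be the number of $\circ$'s at positions strictly less than $l_i$. Any $\circ$ preceding $l_i$ also precedes $l_{i+1}$, so $1 \leq s_1 \leq s_2 \leq \cdots \leq s_m$, and reversing this sequence produces a partition $\Phi(\pfk) := (s_m, s_{m-1}, \ldots, s_1)$. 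Summing inversions by their right endpoint yields
\[ \codeg(\pfk) = \sum_{i=1}^m s_i = |\Phi(\pfk)|. \]

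The inverse construction is straightforward: given a partition $\lambda = (\lambda_1 \geq \cdots \geq \lambda_m \geq 1)$, I build the unique normalized $\pfk$ by filling positions $\leq 0$ with $\bullet$, then filling positions $\geq 1$ greedily so that the $i$-th $\bullet$ lying past position $0$ is preceded by exactly $\lambda_{m+1-i}$ $\circ$'s, and completing with $\circ$'s indefinitely. The empty partition is sent to the baseline sequence $\cdots \bullet \bullet | \circ \circ \cdots$, which has codegree $0$. Verifying that $\Phi$ and this construction are mutually inverse is routine. The only subtle point, which should be addressed but poses no real obstacle, is that $\Phi$ is well-defined on translation classes: one must check that the normalization used to pick a representative does not depend on any choice, and that the quantities $l_i$ and $s_i$ are translation-invariant, which is clear because both the positions of the $\bullet$'s past the leftmost $\circ$ and the $\circ$-counts before them are intrinsic to the shape of the sequence.
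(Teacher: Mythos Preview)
Your proof is correct and is essentially the same bijection the paper uses. The only cosmetic difference is that the paper records the partition via its part multiplicities $u_j = \#\{i : s_i = j\}$ and then computes $\sum_{\pfk} x^{\codeg(\pfk)} = \prod_j (1-x^j)^{-1}$ as a product over $j$, whereas you record the parts $s_i$ directly and invoke $p(x) = \sum_\lambda x^{|\lambda|}$; the underlying map $\pfk \mapsto \lambda$ is identical.
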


\begin{proof}
Let $\pfk\in\P$ be a sequence with codegree smaller than $i$. Choose the reindexation of $\pfk$ such that $i$ is the last index whose value is $\bullet$. As $I(\pfk)$ is finite, there is a finite number of $\circ$ before index $i$ since each of them yields an inversion. Moreover, none can have negative index otherwise we would have the form
$$\pfk=\cdots\bullet\bullet\bullet \cdots\circ [\cdots]\bullet\circ\circ\circ\cdots,$$
and each element in the bracketed zone yields an inversion,
leading to more than $i+1$ inversions. Thus the set $\{\pfk\in\P\ |\ \codeg(\pfk)\leqslant i\}$ is finite and the generating series is well-defined.

An element $\pfk \in\P$ is fully determined by the sequence with finite support $u(\pfk)=(u_j)_{j\geqslant 1}$, with $u_j$ being the number of $\bullet$ with $j$ $\circ$ on their left. The inverse bijection associates to an integer sequence with finite support $u$ the element of $\P$ defined as follows:
        \begin{itemize}[label=$\ast$]
            \item put a $\circ$ at $0$ and $\bullet$ for negative indices,
            \item inductively, starting at $j=1$, put $u_j$ $\bullet$ and then a new $\circ$,
            \item as $u$ is of finite support, the algorithm finishes by only putting $\circ$.
        \end{itemize}
The codegree expresses as
\[ \codeg(\pfk)=\sum_{j=1}^\infty ju_j.\]
If $\codeg(\pfk)\leq i$ then $u_j=0$ for $j>i$. Computing the generating series modulo $x^{i+1}$, we only care about the $\pfk$ having the sequence $u(\pfk)$ with support in $[\![1;i]\!]$, and $u_j$ may take any value considered that too large values  will contribute $0$ modulo $x^{i+1}$. Thus one has:
\begin{align*}
\sum_{\pfk\in\P}x^{\codeg(\pfk)}\equiv &  \sum_{u_1,\dots,u_i=0}^\infty x^{\sum ju_j} \modulo x^{i+1} \\
\equiv & \prod_{j=1}^i \left( \sum_{u_j=1}^\infty x^{ju_j}\right)=\prod_{j=1}^i \frac{1}{1-x^j} \modulo x^{i+1} \\
\equiv & \prod_{j=1}^\infty\frac{1}{1-x^j} = p(x) \modulo x^{i+1}.
\end{align*}
As the congruence is true modulo $x^{i+1}$ for every $i$, we get the desired equality.
\end{proof}

\begin{expl}
Continuing Example \ref{expl-pearl-chain} one has $u(\pfk)=(0,1,2,1,0,0,\dots).$
\end{expl}

\begin{lem}\label{lem-correspondence-LR-inv}
    Let $\sfW\in\WWW_\beta$ with $\codeg(\sfW)\leqslant i$ and $\beta\cdot D>2i$ for any toric divisor $D$.  Then the data of the sloping pairs $(L,R)$ is equivalent to the data of an element $\pfk_c \in\P$ for any corner of $\Delta_\beta$ non-adjacent to a horizontal edge, such that $\codeg(L,R) = \sum_c \codeg(\pfk_c)$.
\end{lem}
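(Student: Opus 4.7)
The plan is to decompose $(L, R)$ into independent contributions, one per corner of $\Delta_\beta$ non-adjacent to a horizontal edge, and to encode each contribution as an element of $\P$. By Lemma~\ref{lem-sloping-pairs-if-bounded-codegree}(i), every inversion in $L$ or $R$ has size one, and thus involves a pair of consecutive slope values $(k, k+1)$ in $b_{\text{left}}(\Delta_\beta)$ or $b_{\text{right}}(\Delta_\beta)$. Such consecutive pairs correspond exactly to the corners $c$ of $\Delta_\beta$ non-adjacent to a horizontal edge, and each size-one inversion contributes exactly $1$ to $\codeg(L, R)$. Hence $\codeg(L, R)$ splits naturally as $\sum_c \codeg_c$, where $\codeg_c$ counts the inversions contributing at corner $c$.

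For a corner $c$ with associated slopes $(k, k+1)$ on the left side (the right side is handled symmetrically), I would extract the subsequence of $(L(v_1), \dots, L(v_a))$ taking values in $\{k, k+1\}$, encode it as a finite word on $\{\bullet, \circ\}$ via $k \mapsto \bullet$ and $k+1 \mapsto \circ$, and extend it to a bi-infinite sequence by appending $\bullet$'s on the left and $\circ$'s on the right. Quotienting by translation yields an element $\pfk_c \in \P$, which is non-constant since both $\beta \cdot D_{(-1,k)}$ and $\beta \cdot D_{(-1,k+1)}$ are positive. The $\bullet$- and $\circ$-extensions create no new inversions, so $I(\pfk_c)$ corresponds bijectively to the size-one inversions at $c$, giving $\codeg(\pfk_c) = \codeg_c$. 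Summing over $c$ then delivers the desired codegree identity.

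The step I expect to be the main technical point is showing that the assignment $(L, R) \mapsto (\pfk_c)_c$ is bijective, i.e.\ that $(L, R)$ can be uniquely recovered from the family $(\pfk_c)_c$. Each $\pfk_c$ admits a unique finite \emph{core}, obtained by trimming the leading $\bullet$'s and trailing $\circ$'s; it starts with a $\circ$ and ends with a $\bullet$, and every symbol in the core participates in at least one inversion (each $\bullet$ of the core sits after the first $\circ$, each $\circ$ sits before the last $\bullet$). Consequently, the core contains at most $2\codeg(\pfk_c) \leq 2i$ symbols. Since $\beta \cdot D_{(-1, k)}$ and $\beta \cdot D_{(-1, k+1)}$ both exceed $2i$, one can unambiguously pad the core with the required numbers of $\bullet$'s on the left and $\circ$'s on the right to recover the subsequence of $L$ in $\{k, k+1\}$. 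Merging these reconstructions across all relevant corners yields $L$ and $R$. The subtle point is verifying that the translation-equivalence on $\P$ exactly absorbs the absolute-position information lost by the extraction, and that the independent reconstructions at different corners reassemble consistently into a single pair $(L, R)$.
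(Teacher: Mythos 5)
Your proof follows essentially the same approach as the paper's: invoke Lemma \ref{lem-sloping-pairs-if-bounded-codegree}(i) to get that all inversions are between consecutive slope values, attribute each such inversion to the unique corner where those two slopes meet, encode the resulting local transition as a sequence in $\{\bullet,\circ\}^\ZZ$, and recover $(L,R)$ from $(\pfk_c)_c$ by padding the finite cores and concatenating. The only cosmetic difference is that you extract the \emph{subsequence} of $L$ with values in $\{k,k+1\}$ rather than the paper's consecutive segment $L_p$; after the $\bullet$/$\circ$ extension and the translation quotient the two constructions give the same element of $\P$, so the arguments coincide, and your bound $\#(\text{core}) \leq 2\,\codeg(\pfk_c)\leq 2i < \beta\cdot D$ does the same job as the paper's implicit observation that the transition zones fit between the required number of repeated slope values.
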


\begin{proof}
    Let $(L,R)$ be the tuple of sloping pairs of $\sfW$, and let $\theta\leqslant p\leqslant\theta'$ be the integers such that the edges of the left side of $\Delta_\beta$ have outgoing normal vectors $(-1,p)$. Point $(i)$ of lemma \ref{lem-sloping-pairs-if-bounded-codegree} says that $L$ writes as a concatenation $L = (L_\theta,\dots,L_{\theta'-1})$ where $L_p$ is of the form $(p,\dots,p, \star,\dots,\star,p+1,\dots,p+1)$ with $\star \in\{p,p+1\}$. Given $p$, let $c_p^-$ be the corner of $\Delta_\beta$ whose adjacent edges have outgoing normal vectors $(-1,p)$ and $(-1,p+1)$. Replacing $p$ by $\bullet$ and $p+1$ by $\circ$, the tuple $L_p$ gives an element $\pfk_{c_p^-} \in \P$. Similarly, $R$ gives elements $\pfk_{c_p^+} \in \P$. By construction, one has $\codeg(L,R) = \sum_c \codeg(\pfk_c)$, where the sum runs over the corners of $\Delta_\beta$ non-adjacent to a horizontal edge.

    Conversely, assume we are given a family $(\pfk_c)_c \in \P^{\chi-4}$. We construct $L$ from the elements $\pfk_{c_p^-}$ corresponding to corners of the left side of $\Delta_\beta$ in the following way. For any $p$, truncate $\pfk_{c_p^-}$ just before its first $\circ$ and just after its last $\bullet$. Replacing $\bullet$ by $p$ and $\circ$ by $p+1$ gives a tuple $\tilde L_p$. Then $L$ is the concatenation $L = (\tilde L_\theta, \dots, p,p, \tilde L_p, p+1, p+1,\dots,L_{\theta'-1})$ where we add sufficiently enough $p$ between $\tilde L_{p-1}$ and $\tilde L_p$ so that the total number of $p$ is the number given by proposition \ref{prop-corresp-word-diag-genus-0-general} $(iv)$. We proceed similarly for $R$, and by construction one has $\codeg(L,R) = \sum_c \codeg(\pfk_c)$.
\end{proof}

\begin{expl}\label{expl-pearls-from-sloping-pairs}
   To the tuple $L = (0,1,0,1,1,0,1,1,1,2,1,2,2)$ we associate the sequences $\pfk_1 = \cdots\bullet\bullet\circ\bullet\circ\circ\bullet\circ\circ\cdots$ and $\pfk_2= \cdots\bullet\bullet\circ\bullet\circ\circ\cdots$,  where $\bullet$ and $\circ$ correspond to 0 and 1 in $\pfk_1$ (resp. 1 and 2 in $\pfk_2$).
\end{expl}

\subsubsection{Enumeration of words in the $h$-transverse setting.}

We can now compute the asymptotic refined invariant in genus $0$ for $h$-transverse polygons.

\begin{theo}\label{theo-AR-genus0-general}
    Let $X$ be a toric surface associated to a $h$-transverse, horizontal and non-singular polygon, with Euler characteristic $\chi$. Then the genus 0 asymptotic refined invariant is
    $$AR^X_{0}= p(x)^\chi.$$
\end{theo}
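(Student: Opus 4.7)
The plan is to mimic the Hirzebruch case proof (Theorem \ref{theo-AR-genus0-hirzebruch}), but now incorporate the sloping pair data via the pearl sequences introduced in Section \ref{sec-definition-pearl}. Fix $i \geq 1$ and assume $\beta \cdot D > 2i$ for every toric divisor $D$ of $X$. I will compute $\widetilde{BG}^X_0(\beta) \bmod x^{i+1}$ and show that this residue equals $p(x)^\chi \bmod x^{i+1}$, after which letting $i \to \infty$ yields the claim.

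By Proposition \ref{prop-corresp-word-diag-genus-0-general}, summing over marked floor diagrams of genus $0$ and class $\beta$ amounts to summing over words $\sfW \in \WWW_\beta$. Only words of codegree at most $i$ contribute modulo $x^{i+1}$. For such words, Lemma \ref{lem-BT-words-if-bounded-codegree} (which still applies after forgetting the indices of $\sff_{*,*}$) shows that $\sfW$ is determined by a $B$-sentence $\bfk \in \S(b^\bot)$, a $T$-sentence $\tfk \in \S(b^\top)$, and the sloping pairs $(L,R)$. By Lemma \ref{lem-sloping-pairs-if-bounded-codegree}(iii), the elevators of the underlying diagram have weight strictly greater than $i$, so the corresponding factors $(1-x^{w(e)})^2$ are $\equiv 1 \bmod x^{i+1}$; thus the multiplicity reduces, exactly as in Lemma \ref{lem-multiplicity-asymptotic-word-genus-0-hirzebruch}, to
\[
\mu(\D) \equiv (1-x)^{b^\top+b^\bot}\, x^{\codeg(\bfk)+\codeg(\tfk)+\codeg(L,R)} \bmod x^{i+1}.
\]

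Next, Lemma \ref{lem-correspondence-LR-inv} provides a bijection between admissible $(L,R)$ and tuples $(\pfk_c)_c \in \P^{\chi-4}$, one pearl sequence per corner $c$ of $\Delta_\beta$ non-adjacent to a horizontal edge (there are exactly $\chi - 4$ such corners since the horizontality assumption contributes $4$ corners, all adjacent to a horizontal side), with $\codeg(L,R) = \sum_c \codeg(\pfk_c)$. The total generating series therefore factors as
\[
\widetilde{BG}^X_0(\beta) \equiv \Bigl((1-x)^{b^\bot}\!\!\sum_{\bfk \in \S(b^\bot)}\!\! x^{\codeg(\bfk)}\Bigr) \Bigl((1-x)^{b^\top}\!\!\sum_{\tfk \in \S(b^\top)}\!\! x^{\codeg(\tfk)}\Bigr) \prod_{c}\Bigl(\sum_{\pfk_c \in \P} x^{\codeg(\pfk_c)}\Bigr) \bmod x^{i+1}.
\]
Applying Lemma \ref{lem-generating-series-T-words} to each sentence factor and Lemma \ref{lem-generating-series-corners} to each of the $\chi - 4$ pearl factors yields
\[
\widetilde{BG}^X_0(\beta) \equiv p(x)^2 \cdot p(x)^2 \cdot p(x)^{\chi-4} = p(x)^\chi \bmod x^{i+1}.
\]

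The main technical point is the corner count: one has to confirm that the horizontality and $h$-transversality hypotheses give exactly $4$ horizontal-adjacent corners (from the top and bottom horizontal edges) and hence $\chi - 4$ non-horizontal corners, so that the exponents add to $2 + 2 + (\chi - 4) = \chi$. This is the only place the horizontality assumption is genuinely needed: if the polygon lacked a top or bottom edge, the structural decomposition of $\sfW$ into a left-$B$-block and a right-$T$-block would have to be revisited. The remaining work is bookkeeping, relying entirely on the lemmas already established earlier in the section.
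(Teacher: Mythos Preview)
Your proposal is correct and follows essentially the same approach as the paper's own proof: both decompose words of bounded codegree into a $B$-sentence in $\S(b^\bot)$, a $T$-sentence in $\S(b^\top)$, and a tuple of pearl sequences $(\pfk_c)_c \in \P^{\chi-4}$ via Lemma~\ref{lem-correspondence-LR-inv}, then factor the generating series and apply Lemmas~\ref{lem-generating-series-T-words} and~\ref{lem-generating-series-corners} to obtain $p(x)^2\cdot p(x)^2\cdot p(x)^{\chi-4}=p(x)^\chi$. Your explicit remark on why the corner count is $\chi-4$ is a helpful clarification but not a departure from the paper's argument.
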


\begin{proof}
We can determine $AR_0^X \mod x^{i+1}$ by summing the multiplicities of the words of $\WWW_\beta$ of codegree at most $i$, with $\beta\in H_2(X,\ZZ)$ such that for every toric divisor $D$ we have $\beta\cdot D>2i$.

By Lemma \ref{lem-sloping-pairs-if-bounded-codegree} the weight of every bounded elevator in the diagram associated to a word $\sfW\in\WWW_\beta$ of codegree at most $i$ is strictly bigger than $i$. Hence the multiplicity modulo $x^{i+1}$ is
$$(1-x)^{b^\top+b^\bot} x^{\codeg(\sfW)}.$$
The word is fully determined by the following data:
    \begin{itemize}
        \item an element $\tfk\in\S(b^\top)$ encoding the $T$-words,
        \item an element $\bfk\in\S(b^\bot)$ encoding the $B$-words,
        \item an element $\pfk_c\in\P$ for any of the $\chi-4$ corners $c$ of $\Delta$ non-adjacent to a horizontal side,
    \end{itemize}
such that
\[ \codeg(\sfW) = \codeg(\tfk) + \codeg(\bfk) + \dsum_c \codeg(\pfk_c) \leq i.  \]
The data of $\tfk$ and $\bfk$ are enough to recover the word up to the indices of the letters $\sff_{*,*}$. The data of the $\pfk_c$ allows to recover the sloping pairs $(L,R)$ by Lemma \ref{lem-correspondence-LR-inv}. Hence, summing over $\S(b^\bot) \times \S(b^\top) \times \P^{\chi-4}$ (and potentially counting terms which contribute 0 modulo $x^{i+1}$) the generating series of words counted with multiplicity factors modulo $x^{i+1}$ :
\begin{align*}
  \left((1-x)^{b^\bot}\sum_{\bfk\in\S(b^\bot) } x^{\codeg(\bfk)}\right) 
 \left((1-x)^{b^\top}\sum_{ \tfk\in\S(b^\top) } x^{\codeg(\tfk)}\right)    \left( \sum_{\pfk\in\P }x^{\codeg(\pfk)}\right)^{\chi-4}.
 \end{align*}
Using Lemmas \ref{lem-generating-series-T-words} and \ref{lem-generating-series-corners} we obtain for the generating series
\[  p(x)^2\cdot p(x)^2\cdot p(x)^{\chi-4} =  p(x)^\chi \modulo x^{i+1}. \]
As this is true for every $i\geqslant 1$ we get the result.
\end{proof}

\section{Asymptotic refined invariant in genus $1$}
\label{sec-genus1}

The idea to compute the genus $1$ asymptotic invariant is to construct floor diagrams of genus $1$ by adding an edge to a genus $0$ diagram. This way, we can group together the genus $1$ diagrams obtained from the same genus $0$ diagram, so that we reduce the enumeration to the genus $0$ case, with a multiplicity corresponding to the weighted count of diagrams. We start with Hirzebruch surfaces before going to $h$-transverse, horizontal and non-singular toric surfaces. The strategy is the same: we compute $\widetilde{BG}_1^X(\beta)$ modulo $x^{i+1}$ and find an expression that does not depend on $i$, before making $i$ goes to $\infty$.

    \subsection{The case of Hirzebruch surfaces}

    To get to the genus $1$ case, the idea is that a genus $1$ diagram is obtained from a genus $0$ diagram by adding one edge, and conversely we get a genus $0$ diagram by removing an edge from a genus $1$ diagram. However, it might not be clear which edge to remove, and what to do to balance the diagram again. We make this construction precise by introducing the notion of \textit{nerved diagram}.

\subsubsection{Nerved diagrams.} We already fixed an integer $i$ to bound the codegree of diagrams we look at. Let us fix a second integer $M\geq1$.

        \begin{defi}
            Let $\D$ be a genus $g$ diagram in a class $aE+bF$, with $a>2i$ and $b>(g+1)M+i$. Assume $\codeg(\D) \leq i$. A \textit{nerve} for $\D$ is the choice of an edge between each pair of consecutive floors with weight $\geqslant M$. We call the data of $\D$ with the choice of a nerve a \textit{nerved diagram}. We denote with a tilde the nerved diagrams, \eg $\tilde\D$.
        \end{defi}

\begin{rem}
    For genus $g$, provided $b>(g+1)M+i$ and $\codeg(\D) \leq i$, Lemma \ref{lem-poids-grands} ensures the existence of a nerve.
\end{rem}

\begin{lem}
Assume $b>i+2M$ and let $\D$ be a floor diagram in the class $aE+bF$ with $\codeg(\D) \leq i$. 
\begin{enumerate}[label=(\roman*)]
    \item If $\D$ is of genus $0$, there exists a unique choice of nerve.
    \item If $\D$ is of genus $1$ with an edge skipping some floors, there exists a unique choice of nerve.
    \item If $\D$ is of genus $1$ with two edges linking consecutive floors, there are one or two possible nerves depending on whether only one of the edges or both have weight bigger than $M$.
\end{enumerate}
\end{lem}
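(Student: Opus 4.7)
The plan is to perform a Betti-number count on the bounded edges of $\D$ in each of the three cases, and then invoke Lemma \ref{lem-poids-grands} to control the weights of those edges.

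First I would check that the hypothesis $b > i + 2M$ implies $b^\bot = b > (g+1)M + i$ for both $g = 0$ and $g = 1$, and that $b^\top = b + \delta a \geq b$ satisfies the same bound a fortiori. Hence Lemma \ref{lem-poids-grands} applies to $\D$ in every case: the floors $v_1 \prec \cdots \prec v_a$ are totally ordered, each pair $v_k, v_{k+1}$ of consecutive floors is joined by at least one bounded edge, and (from the proof of that lemma) the total weight $\tilde\omega_k$ of the bounded edges between $v_k$ and $v_{k+1}$ satisfies $\tilde\omega_k > (g+1)M$. In particular any bounded edge of weight $> M$ automatically meets the $\geq M$ threshold in the definition of a nerve.

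Next, denote by $n_k \geq 1$ the number of bounded edges between $v_k$ and $v_{k+1}$, and by $s \geq 0$ the number of skipping edges of $\D$. Connectivity of $\D$ together with the value $g$ of the first Betti number gives $\sum_k n_k + s = a - 1 + g$. For $g = 0$ this forces $s = 0$ and $n_k = 1$ for every $k$; each such edge has weight $\tilde\omega_k > M$, which gives the unique nerve of case (i). For $g = 1$ the equation $\sum_k n_k + s = a$ combined with $n_k \geq 1$ leaves exactly two possibilities: $s = 1$ with all $n_k = 1$, which is case (ii); or $s = 0$ with exactly one $n_{k_0} = 2$ and all other $n_k = 1$, which is case (iii).

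For case (ii), every consecutive pair is joined by a unique edge of weight $\tilde\omega_k > 2M$, so the nerve is uniquely determined. For case (iii), the pairs $(v_k, v_{k+1})$ with $k \neq k_0$ each force a unique choice of nerve edge of weight $> 2M$; at $(v_{k_0}, v_{k_0+1})$ the two parallel edges have weights $w_1, w_2$ with $w_1 + w_2 = \tilde\omega_{k_0} > 2M$, which rules out $w_1, w_2$ both being $< M$. Consequently the number of valid choices at this pair is $2$ if both weights are $\geq M$ and $1$ if only one is, which is the assertion. No real obstacle is expected: the argument is a direct combination of Lemma \ref{lem-poids-grands} with an elementary Betti-number count, and the only care needed is to keep track of the distinction between the strict inequality supplied by the lemma and the non-strict threshold $\geq M$ in the definition of a nerve.
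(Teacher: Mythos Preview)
Your proof is correct and takes essentially the same approach as the paper. The only difference is cosmetic: you make the Betti-number count $\sum_k n_k + s = a - 1 + g$ and the appeal to Lemma \ref{lem-poids-grands} explicit, whereas the paper re-derives the weight bound $\tilde\omega_k \geq b - i > 2M$ directly and takes the resulting case structure for granted.
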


\begin{proof}
\begin{enumerate}[label=$(\roman*)$]
    \item In the genus $0$ case, we already know by \cite{brugalle2020polynomiality} that the floors are totally ordered in the diagram. The total weight between the floors $m$ and $m+1$ is $b+\delta m$ minus the number of sinks that skip the floor $m+1$ and the number of sources that skip the floor $m$. As the number of ends skipping some floors is bounded by $i$, the weight of the unique edge between two consecutive floors is bigger than $b-i\geqslant M$, so that there is a unique nerve.
    \item  Because $\codeg(\D) \leq i$, the weight of the skipping edge is bounded by $i$ and we conclude as in the genus $0$ case.
    \item  The sum of the weights of the two edges is bigger than $b-i>2M$, so that at least one of them has weight $\geqslant M$.
    \end{enumerate}
\end{proof}

\begin{figure}[h!]
    \centering
    \begin{tabular}{ccccc}
    \begin{tikzpicture}[line cap=round,line join=round,x=0.75cm,y=0.75cm]
        \floor (F1) at (-1,0);
        \floor (F2) at (0,2);
        \floor (F3) at (0,4);
        \node (T1) at (-0.5,6) {};
        \node (T2) at (0.5,6) {};
        \node (B1) at (-1.6,-2) {};
        \node (B2) at (-1,-2) {};
        \node (B3) at (-0.4,-2) {};
        \node (B4) at (0.2,-2) {};
        \node (B5) at (0.8,-2) {};
        \marked (T1) to (F3) pos=0.33 in=120 out=-90 ;
        \marked (T2) to (F3) pos=0.75 in=60 out=-90 ;

        \leftmarked (F3) to (F2) pos=0.5 in=90 out=-90 w=3;
        \leftmarked (F2) to (F1) pos=0.66 in=90 out=-120 w=2;
        
        \marked (B1) to (F1) pos=0.2 in=-120 out=90 ;
        \marked (B2) to (F1) pos=0.4 in=-90 out=90 ;
        \marked (B3) to (F1) pos=0.8 in=-60 out=90 ;
        \marked (B4) to (F2) pos=0.25 in=-90 out=90 ;
        \marked (B5) to (F2) pos=0.9 in=-60 out=90 ;
    \end{tikzpicture} & & \begin{tikzpicture}[line cap=round,line join=round,x=0.75cm,y=0.75cm]
        \floor (F1) at (-0.5,0);
        \floor (F2) at (0,2);
        \floor (F3) at (0,4);
        \floor (F4) at (-0.5,6);
        \node (T1) at (-1,8) {};
        \node (T2) at (0,8) {};
        \node (T3) at (1,8) {};
        \node (B1) at (-1,-2) {};
        \node (B2) at (0,-2) {};
        \node (B3) at (1,-2) {};
        \marked (T1) to (F4) pos=0.2 in=120 out=-90 ;
        \marked (T2) to (F4) pos=0.5 in=60 out=-90 ;
        \marked (T3) to (F3) pos=0.4 in=60 out=-90 ;

        \leftmarked (F4) to (F3) pos=0.5 in=120 out=-90 w=2 ;
        \rightmarked (F3) to (F2) pos=0.8 in=60 out=-60 w=2 ;
        \marked (F3) to (F2) pos=0.2 in=120 out=-120 ;
        \leftmarked (F2) to (F1) pos=0.2 in=90 out=-120 w=2 ;
        
        \marked (B1) to (F1) pos=0.3 in=-120 out=90 ;
        \marked (B2) to (F1) pos=0.7 in=-60 out=90 ;
        \marked (B3) to (F2) pos=0.65 in=-60 out=90 ;
    \end{tikzpicture} & &  \begin{tikzpicture}[line cap=round,line join=round,x=0.75cm,y=0.75cm]
        \floor (F1) at (-0.5,0);
        \floor (F2) at (0,2);
        \floor (F3) at (0,4);
        \floor (F4) at (-0.5,6);
        \node (T1) at (-1,8) {};
        \node (T2) at (0,8) {};
        \node (T3) at (1,8) {};
        \node (B1) at (-1,-2) {};
        \node (B2) at (0,-2) {};
        \node (B3) at (1,-2) {};
        \marked (T1) to (F4) pos=0.2 in=120 out=-90 ;
        \marked (T2) to (F4) pos=0.5 in=60 out=-90 ;
        \marked (T3) to (F3) pos=0.4 in=60 out=-90 ;

        \leftmarked (F4) to (F3) pos=0.5 in=120 out=-90 w=2 ;
        \marked (F3) to (F2) pos=0.8 in=60 out=-60;
        \draw (0.8,3) node {$2$} ;
        \doublemarked (F3) to (F2) pos=0.2 in=120 out=-120 ;
        \leftmarked (F2) to (F1) pos=0.2 in=90 out=-120 w=2 ;
        
        \marked (B1) to (F1) pos=0.3 in=-120 out=90 ;
        \marked (B2) to (F1) pos=0.7 in=-60 out=90 ;
        \marked (B3) to (F2) pos=0.65 in=-60 out=90 ;
    \end{tikzpicture} \\
    $\D_1$ & $\ \ \ $ & $\D_2$ & $\ \ \ $ & $\D_3$ \\
    \end{tabular}
    \caption{Nerved diagrams of genus $0$ and $1$.}
    \label{fig-expl-nerved-diagram}
\end{figure}
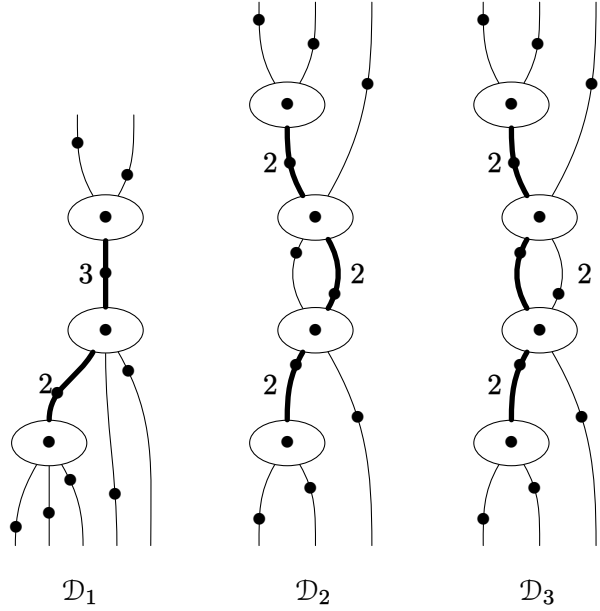

        \begin{expl}
            Assume we chose $M=1$, so that there are no condition on the weight of the edges on the nerve. On Figure \ref{fig-expl-nerved-diagram} we depict three nerved diagrams, the nerve consists in thickened edges. The first nerved diagram is the unique nerved diagram associated to the underlying genus $0$ diagram. The remaining two nerved diagrams have the same underlying genus $1$ diagram. If we had taken $M=2$, only one of the two edges between the second and third floor could have been chosen in the nerve.
        \end{expl}
        
We assign to each nerved diagram a multiplicity so that the count of nerved diagram matches the count of diagrams.

\begin{defi}
    Let $\D$ be diagram of genus $g$ in the class $aE+bF$ and assume $b>i+(g+1)M$. Let $N(\D)$ be the number of nerves of $\D$. The \emph{multiplicity} of a nerved diagram $\tilde\D$ is $\mu(\tD)=\frac{1}{N(\D)}\mu(\D)$.
\end{defi}

        \begin{rem}
            Forgetting about the ends of the diagram, a nerve is a spanning tree of the  underlying graph so that there are $g$ bounded edges not belonging to the nerve.
        \end{rem}

        \medskip

\subsubsection{Constructing genus $1$ nerved diagrams from genus $0$ ones.}

Let $\tDDD_g$ be the set of nerved marked diagram of genus $g$ in the class $aE+bF$. Assume $b>i+2M$. We have a map
        $$\mathrm{ft}:\tDDD_1\to\tDDD_0$$
that forgets the unique bounded edge $e$ not on the nerve and add $w(e)$ to the weights of all the edges between the two vertices to which $e$ was attached. Conversely, we can construct a genus $1$ nerved marked diagram from a genus $0$ one by adding an edge $e$ with weight $w$, and removing $w$ to the weights of all the edges between the two vertices to which $e$ is attached.  This is possible if we are provided with the weight $w$ of the added edge, the place of its marking between two floors $m$ and $m+1$, and the floors it is attached to, encoded by a pair $(s_+,s_-)$ that are the numbers of floors it skips above and below its marking. This data is subject to the following constraints:
            \begin{itemize}
                \item $s_-\leqslant m-1$ and $s_+\leqslant a-m-1$, 
                
                \item  $w\leqslant \min (w(e))-M$, where the minimum is over the weights of the edges of the nerve between the floors $m-s_-$ and $m+1+s_+$, so that the weights of the nerves are still $\geqslant M$.
            \end{itemize}

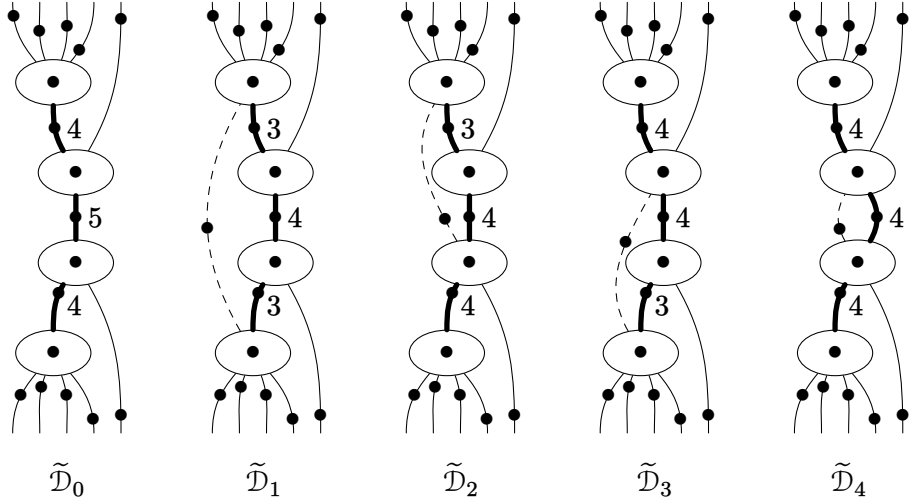
\begin{figure}[h!]
    \centering
    \begin{tabular}{ccccccccc}
    \begin{tikzpicture}[line cap=round,line join=round,x=0.75cm,y=0.75cm,scale=0.8]
        \floor (F1) at (-0.5,0);
        \floor (F2) at (0,2);
        \floor (F3) at (0,4);
        \floor (F4) at (-0.5,6);
        \node (T1) at (-1.4,8) {};
        \node (T2) at (-0.8,8) {};
        \node (T3) at (-0.2,8) {};
        \node (T4) at (0.4,8) {};
        \node (T5) at (1,8) {};
        \node (B1) at (-1.4,-2) {};
        \node (B2) at (-0.8,-2) {};
        \node (B3) at (-0.2,-2) {};
        \node (B4) at (0.4,-2) {};
        \node (B5) at (1,-2) {};
        \marked (T1) to (F4) pos=0.2 in=130 out=-90 ;
        \marked (T2) to (F4) pos=0.5 in=110 out=-90 ;
        \marked (T3) to (F4) pos=0.4 in=70 out=-90 ;
        \marked (T4) to (F4) pos=0.8 in=50 out=-90 ;
        \marked (T5) to (F3) pos=0.1 in=60 out=-90 ;

        \rightmarked (F4) to (F3) pos=0.5 in=120 out=-90 w=4;
        \rightmarked (F3) to (F2) pos=0.5 in=90 out=-90 w=5;
        \rightmarked (F2) to (F1) pos=0.2 in=90 out=-120 w=4;
        
        \marked (B1) to (F1) pos=0.6 in=-130 out=90 ;
        \marked (B2) to (F1) pos=0.8 in=-110 out=90 ;
        \marked (B3) to (F1) pos=0.65 in=-70 out=90 ;
        \marked (B4) to (F1) pos=0.2 in=-50 out=90 ;
        \marked (B5) to (F2) pos=0.1 in=-60 out=90 ;
    \end{tikzpicture} & & \begin{tikzpicture}[line cap=round,line join=round,x=0.75cm,y=0.75cm,scale=0.8]
        \floor (F1) at (-0.5,0);
        \floor (F2) at (0,2);
        \floor (F3) at (0,4);
        \floor (F4) at (-0.5,6);
        \node (T1) at (-1.4,8) {};
        \node (T2) at (-0.8,8) {};
        \node (T3) at (-0.2,8) {};
        \node (T4) at (0.4,8) {};
        \node (T5) at (1,8) {};
        \node (B1) at (-1.4,-2) {};
        \node (B2) at (-0.8,-2) {};
        \node (B3) at (-0.2,-2) {};
        \node (B4) at (0.4,-2) {};
        \node (B5) at (1,-2) {};
        \marked (T1) to (F4) pos=0.2 in=130 out=-90 ;
        \marked (T2) to (F4) pos=0.5 in=110 out=-90 ;
        \marked (T3) to (F4) pos=0.4 in=70 out=-90 ;
        \marked (T4) to (F4) pos=0.8 in=50 out=-90 ;
        \marked (T5) to (F3) pos=0.1 in=60 out=-90 ;

        \rightmarked (F4) to (F3) pos=0.5 in=120 out=-90 w=3;
        \rightmarked (F3) to (F2) pos=0.5 in=90 out=-90 w=4;
        \rightmarked (F2) to (F1) pos=0.2 in=90 out=-120 w=3;

        \dashedmarked (F1) to (F4) pos=0.45 in=-120 out=120;
        
        \marked (B1) to (F1) pos=0.6 in=-130 out=90 ;
        \marked (B2) to (F1) pos=0.8 in=-110 out=90 ;
        \marked (B3) to (F1) pos=0.65 in=-70 out=90 ;
        \marked (B4) to (F1) pos=0.2 in=-50 out=90 ;
        \marked (B5) to (F2) pos=0.1 in=-60 out=90 ;
    \end{tikzpicture} & & \begin{tikzpicture}[line cap=round,line join=round,x=0.75cm,y=0.75cm,scale=0.8]
        \floor (F1) at (-0.5,0);
        \floor (F2) at (0,2);
        \floor (F3) at (0,4);
        \floor (F4) at (-0.5,6);
        \node (T1) at (-1.4,8) {};
        \node (T2) at (-0.8,8) {};
        \node (T3) at (-0.2,8) {};
        \node (T4) at (0.4,8) {};
        \node (T5) at (1,8) {};
        \node (B1) at (-1.4,-2) {};
        \node (B2) at (-0.8,-2) {};
        \node (B3) at (-0.2,-2) {};
        \node (B4) at (0.4,-2) {};
        \node (B5) at (1,-2) {};
        \marked (T1) to (F4) pos=0.2 in=130 out=-90 ;
        \marked (T2) to (F4) pos=0.5 in=110 out=-90 ;
        \marked (T3) to (F4) pos=0.4 in=70 out=-90 ;
        \marked (T4) to (F4) pos=0.8 in=50 out=-90 ;
        \marked (T5) to (F3) pos=0.1 in=60 out=-90 ;

        \rightmarked (F4) to (F3) pos=0.5 in=120 out=-90 w=3;
        \rightmarked (F3) to (F2) pos=0.5 in=90 out=-90 w=4;
        \rightmarked (F2) to (F1) pos=0.2 in=90 out=-120 w=4;

        \dashedmarked (F2) to (F4) pos=0.15 in=-120 out=120;
        
        \marked (B1) to (F1) pos=0.6 in=-130 out=90 ;
        \marked (B2) to (F1) pos=0.8 in=-110 out=90 ;
        \marked (B3) to (F1) pos=0.65 in=-70 out=90 ;
        \marked (B4) to (F1) pos=0.2 in=-50 out=90 ;
        \marked (B5) to (F2) pos=0.1 in=-60 out=90 ;
    \end{tikzpicture} & & \begin{tikzpicture}[line cap=round,line join=round,x=0.75cm,y=0.75cm,scale=0.8]
        \floor (F1) at (-0.5,0);
        \floor (F2) at (0,2);
        \floor (F3) at (0,4);
        \floor (F4) at (-0.5,6);
        \node (T1) at (-1.4,8) {};
        \node (T2) at (-0.8,8) {};
        \node (T3) at (-0.2,8) {};
        \node (T4) at (0.4,8) {};
        \node (T5) at (1,8) {};
        \node (B1) at (-1.4,-2) {};
        \node (B2) at (-0.8,-2) {};
        \node (B3) at (-0.2,-2) {};
        \node (B4) at (0.4,-2) {};
        \node (B5) at (1,-2) {};
        \marked (T1) to (F4) pos=0.2 in=130 out=-90 ;
        \marked (T2) to (F4) pos=0.5 in=110 out=-90 ;
        \marked (T3) to (F4) pos=0.4 in=70 out=-90 ;
        \marked (T4) to (F4) pos=0.8 in=50 out=-90 ;
        \marked (T5) to (F3) pos=0.1 in=60 out=-90 ;

        \rightmarked (F4) to (F3) pos=0.5 in=120 out=-90 w=4;
        \rightmarked (F3) to (F2) pos=0.5 in=90 out=-90 w=4;
        \rightmarked (F2) to (F1) pos=0.2 in=90 out=-120 w=3;

        \dashedmarked (F1) to (F3) pos=0.65 in=-120 out=120;
        
        \marked (B1) to (F1) pos=0.6 in=-130 out=90 ;
        \marked (B2) to (F1) pos=0.8 in=-110 out=90 ;
        \marked (B3) to (F1) pos=0.65 in=-70 out=90 ;
        \marked (B4) to (F1) pos=0.2 in=-50 out=90 ;
        \marked (B5) to (F2) pos=0.1 in=-60 out=90 ;
    \end{tikzpicture} & & \begin{tikzpicture}[line cap=round,line join=round,x=0.75cm,y=0.75cm,scale=0.8]
        \floor (F1) at (-0.5,0);
        \floor (F2) at (0,2);
        \floor (F3) at (0,4);
        \floor (F4) at (-0.5,6);
        \node (T1) at (-1.4,8) {};
        \node (T2) at (-0.8,8) {};
        \node (T3) at (-0.2,8) {};
        \node (T4) at (0.4,8) {};
        \node (T5) at (1,8) {};
        \node (B1) at (-1.4,-2) {};
        \node (B2) at (-0.8,-2) {};
        \node (B3) at (-0.2,-2) {};
        \node (B4) at (0.4,-2) {};
        \node (B5) at (1,-2) {};
        \marked (T1) to (F4) pos=0.2 in=130 out=-90 ;
        \marked (T2) to (F4) pos=0.5 in=110 out=-90 ;
        \marked (T3) to (F4) pos=0.4 in=70 out=-90 ;
        \marked (T4) to (F4) pos=0.8 in=50 out=-90 ;
        \marked (T5) to (F3) pos=0.1 in=60 out=-90 ;

        \rightmarked (F4) to (F3) pos=0.5 in=120 out=-90 w=4;
        \rightmarked (F3) to (F2) pos=0.5 in=60 out=-60 w=4;
        \rightmarked (F2) to (F1) pos=0.2 in=90 out=-120 w=4;

        \dashedmarked (F2) to (F3) pos=0.25 in=-120 out=120;
        
        \marked (B1) to (F1) pos=0.6 in=-130 out=90 ;
        \marked (B2) to (F1) pos=0.8 in=-110 out=90 ;
        \marked (B3) to (F1) pos=0.65 in=-70 out=90 ;
        \marked (B4) to (F1) pos=0.2 in=-50 out=90 ;
        \marked (B5) to (F2) pos=0.1 in=-60 out=90 ;
    \end{tikzpicture} \\
    $\tD_0$ & & $\tD_1$ & & $\tD_2$ & & $\tD_3$ & & $\tD_4$
    \end{tabular}
    \caption{On the left a genus $0$ nerved marked diagram. On the right, various genus $1$ nerved marked diagrams that we can obtain by adding an edge of weight $1$ with a marking between the second and third floor.} 
    \label{fig-constr-nerved-diagrams}
\end{figure}

        \begin{expl}
            Assume $M=2$. On Figure \ref{fig-constr-nerved-diagrams} we depicted various ways to get a genus $1$ nerved marked diagram by adding a dashed edge of weight $1$ to $\tD_0$. If we have $(s_+,s_-)=(1,1)$, we get $\tD_1$ because the edge skips one floor above its marking, and one below. Taking $(1,0)$ or $(0,1)$ instead, we get $\tD_2$ and $\tD_3$. If $s_+=s_-=0$, we get $\tD_4$. In all these examples we choose the marking of the added edge to be between the one of the second floor and the one of the bounded edge between the second and third floor.
            
            Let us try to increase the weight $w$ of the dashed edge. For $\tD_1,\tD_2,\tD_3$, $w$ can also be set equal to $2$, but not $3$ since one of the edges on the nerve would get weight $1<M$. For $\tD_4$, we can take $w=2$ or $3$, and in that case the underlying diagram has two possible nerves.
        \end{expl}

        We now relate the multiplicity of a nerved diagram constructed by the above process to the multiplicity of the initial genus $0$ diagram.

        \begin{lem}\label{lem-relation-mult-nerved-diagrams}
            Assume $M>i$ and $b>i+2M$. Let $\tD$ be a genus $0$ nerved marked diagram with $\codeg(\tD)\leqslant i$ in the class $aE+bF$, and let $\tE$ be the genus $1$ marked nerved diagram constructed by the data of the position of the marking, weight $w$ and $(s_+,s_-)$. Then we have
            $$\mu(\tE)=\frac{1}{1+\mathds{1}_{w\geqslant M}}(1-x^w)^2x^{w(s_++s_-)}\mu(\tD) \modulo x^{i+1}.$$
        \end{lem}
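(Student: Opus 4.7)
The plan is to directly compute $\mu(\tE)$ from its definition and compare it to $\mu(\tD)$, tracking separately the three ingredients of the nerved multiplicity: the codegree factor $x^{\codeg}$, the edge factors $(1-x^{w(e)})^2$, and the normalization $1/N(\D)$. Since $\tD$ has genus $0$ with totally ordered floors (its codegree being at most $i$ and its sides sufficiently large), its only bounded edges are the nerve edges themselves, so the new edge added to build $\tE$ is automatically the unique non-nerve bounded edge of $\tE$. This considerably simplifies the bookkeeping.

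First I would compute the codegree change. Recalling $\deg(\D) = \frac{b^\top + b^\bot}{2} + \sum_{e \in E^0(\D)} w(e)$, passing from $\tD$ to $\tE$ inserts a new edge of weight $w$ and reduces by $w$ the weight of each of the $s_+ + s_- + 1$ nerve edges lying between the two attachment floors, so $\deg(\tE) - \deg(\tD) = w - w(s_+ + s_- + 1)$ and hence $\codeg(\tE) = \codeg(\tD) + w(s_+ + s_-)$. Next I would compare the edge factors. By the definition of a nerve and the admissibility constraint $w \leq \min w(e') - M$ on the added edge, every nerve edge in $\tD$ and in $\tE$ has weight at least $M > i$, so $(1 - x^{w(e')})^2 \equiv 1 \pmod{x^{i+1}}$ for all of them. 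The end factors $(1-x)^{b^\top + b^\bot}$ are unchanged. Thus modulo $x^{i+1}$ the only new edge contribution that survives is the factor $(1 - x^w)^2$ from the added non-nerve edge.

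The hard part will be the normalization, where a case analysis is required. Since $\tD$ is genus $0$ with a single edge between consecutive floors, $N(\tD) = 1$. For $N(\tE)$, the added edge lies on a nerve only if it is parallel to a nerve edge between two consecutive floors (i.e. $s_+ = s_- = 0$) and has weight at least $M$. When $s_+ + s_- > 0$, the constraint $\codeg(\tE) \leq i$ forces $w(s_+ + s_-) \leq i$, so $w \leq i < M$, whence $\mathds{1}_{w \geq M} = 0$ and $N(\tE) = 1$; when $s_+ = s_- = 0$, a direct inspection gives $1 + \mathds{1}_{w \geq M}$ valid nerves, according to whether both parallel edges or only the modified nerve edge have weight $\geq M$. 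In both cases $N(\tE) = 1 + \mathds{1}_{w \geq M}$, which combined with the previous two steps yields the formula. One finally notes that when $\codeg(\tE) > i$ both sides of the asserted identity vanish modulo $x^{i+1}$, so the equality needs only be checked in the range above.
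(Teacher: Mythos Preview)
Your proof is correct and follows essentially the same approach as the paper's own proof. The only cosmetic differences are that you compute the codegree shift via the degree formula while the paper reads it off directly as ``weight times number of skipped floors'', and your case split for $N(\tE)$ is organized by whether $s_++s_->0$ rather than by whether $w\geqslant M$; both lead to the same conclusion $N(\tE)=1+\mathds{1}_{w\geqslant M}$.
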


        \begin{proof}
        If $w\geq M$ then $s_+=s_-=0$, otherwise the codegree would be greater than $M$, and also $i$. Hence in that case the added edge links two consecutives floors and, there are two possible nerves. If $w<M$, there is a unique nerve. Hence one has $N(\E)=1+\mathds{1}_{w\geqslant M}$.
        
       By Lemma \ref{lem-poids-grands}, the hypothesis ensures that the sum of weights between consecutive floors in $\D$ is bigger than $2M$. Thus, the only edge potentially contributing to the multiplicity of $\E$ is the one we add, yielding a factor $(1-x^w)^2$. The codegree this edge provides is $w(s_++s_-)$ since it has weight $w$ and skips exactly $s_++s_-$ floors. As the weights of the edges in the nerve are still bigger than $M$ after we added the new edge, they still do not contribute to the multiplicity modulo $x^{i+1}$.
        \end{proof}

        Conversely, we can add the multiplicities of the genus $1$ nerved marked diagrams constructed from a genus $0$ nerved marked diagram. Let $\ang{m}=\frac{x^m}{1-x^m}$.

        \begin{lem}\label{lem-function-to-integrate-to get-genus-1}
            Let $\tD$ be a genus $0$ nerved marked diagram and let $1\leqslant m\leqslant a-1$. Assume $M>i$, $a>2i$ and $b>i+2M$. Let $\mathrm{pos}_m$ be the number of positions where to insert a marking between the floors $m$ and $m+1$. Let $\tomega_m$ be the weight of the edge between these floors. The sum of multiplicities of genus $1$ nerved marked diagrams obtained by inserting an edge with a marking between these floors is
            $$\mathrm{pos}_m\cdot \left(\frac{\tomega_m-1}{2} - d_m \right)\mu(\tD), \text{ where }d_m =\left\{\begin{array}{ll}
                \ang{m} & \text{ if }  m\leqslant i, \\
                \ang{a-m} & \text{ if }  m\geqslant a-i, \\
                0 & \text{ else.} 
            \end{array} \right. .$$
        \end{lem}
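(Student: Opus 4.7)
The plan is to apply Lemma \ref{lem-relation-mult-nerved-diagrams} and sum the resulting multiplicities over all admissible insertions of a new edge between floors $m-s_-$ and $m+1+s_+$. Such an insertion is determined by the marking position (there are $\mathrm{pos}_m$ of them), the weight $w\geq 1$, and the pair $(s_-, s_+)$ with $0 \leq s_- \leq m-1$ and $0 \leq s_+ \leq a-m-1$, subject to the nerve-weight condition $w \leq \min_k \tomega_k - M$ over the relevant edges. After factoring out $\mathrm{pos}_m\cdot\mu(\tD)$, the claim reduces, modulo $x^{i+1}$, to
$$S_m := \sum_{s_-, s_+, w} \frac{(1-x^w)^2}{1+\mathds{1}_{w\geq M}}\,x^{w(s_++s_-)} = \frac{\tomega_m-1}{2}-d_m.$$

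I split $S_m$ according to $s := s_+ + s_-$. For $s = 0$, the inserted edge joins two consecutive floors and $w$ ranges over $1 \leq w \leq \tomega_m - M$. Partitioning this interval into $[1,i]$, $(i,M)$, and $[M, \tomega_m - M]$ and using that $(1-x^w)^2 \equiv 1 \pmod{x^{i+1}}$ on the last two, together with $\sum_{w=1}^i x^w \equiv \ang{1}$ and $\sum_{w=1}^i x^{2w} \equiv \ang{2}$ modulo $x^{i+1}$ (valid since $M > i$ and $\tomega_m > 2M$), a direct computation yields
$$S_m^{(0)} \equiv \frac{\tomega_m - 1}{2} - 2\ang{1} + \ang{2} \pmod{x^{i+1}}.$$

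For $s \geq 1$, the codegree constraint $ws \leq i$ forces $w \leq i < M$, which kills the indicator and makes the nerve-weight condition irrelevant modulo $x^{i+1}$ (it permits $w$ up to at least $b-i-M > i$). The inner sum evaluates to
$$\sum_{w\geq 1}(1-x^w)^2 x^{ws} = \ang{s} - 2\ang{s+1} + \ang{s+2}.$$
In the generic regime $m, a-m > i$ there are $s+1$ admissible pairs $(s_+, s_-)$ of sum $s$, and a telescoping computation (whose coefficient of $\ang{t}$ vanishes for $t \geq 3$) gives $\sum_{s \geq 1}(s+1)(\ang{s}-2\ang{s+1}+\ang{s+2}) = 2\ang{1} - \ang{2}$. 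This exactly cancels the correction $-2\ang{1}+\ang{2}$ in $S_m^{(0)}$, proving the formula with $d_m=0$.

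In the boundary regime $m \leq i$ (symmetrically $m \geq a-i$), the constraint $s_- \leq m-1$ truncates the pair count to $m$ once $s \geq m$. The deviation from the generic count is $-\sum_{s \geq m}(s+1-m)(\ang{s}-2\ang{s+1}+\ang{s+2})$, which by a second telescoping (the coefficient of $\ang{t}$ now vanishes for $t \geq m+1$) collapses to $-\ang{m}$, producing $d_m = \ang{m}$. The assumption $a > 2i$ ensures that the two boundary regimes never overlap. I expect the main delicate point to be the careful bookkeeping of the two telescoping identities together with the verification that the nerve-weight bound on $w$ can be safely replaced by $w \leq i$ in the $s \geq 1$ case; the rest is elementary manipulation of partial geometric sums modulo $x^{i+1}$.
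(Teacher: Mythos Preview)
Your proof is correct and structurally matches the paper's: same appeal to Lemma~\ref{lem-relation-mult-nerved-diagrams}, same factoring out of $\mathrm{pos}_m\cdot\mu(\tD)$, same split into $s=0$ versus $s\geq 1$, and same handling of the nerve-weight constraint.

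The one noteworthy difference is the order in which you evaluate the double sum for $s\geq 1$. You sum over $w$ first, obtaining the second difference $\ang{s}-2\ang{s+1}+\ang{s+2}$, and then telescope over $s$ (twice: once in the generic regime, once for the boundary correction). The paper instead sums over $(s_-,s_+)$ first: in the boundary case $m\leq i$ this gives
\[
\sum_{\substack{s_-\leq m-1,\ s_+\geq 0 \\ s_-+s_+>0}} x^{w(s_-+s_+)} \;=\; \frac{1-x^{mw}}{(1-x^w)^2}-1,
\]
so that after multiplying by $(1-x^w)^2$ one is left with $1-x^{mw}-(1-x^w)^2$. Summing over $w$ yields $\sum_w\bigl[1-(1-x^w)^2\bigr]-\ang{m}$ directly, and the first piece cancels against the corresponding term coming from the $s=0$ computation. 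This avoids both telescoping identities and the explicit appearance of $-2\ang{1}+\ang{2}$, at the cost of a slightly less symmetric treatment of the three regimes. Either route is fine; the paper's is a little shorter.
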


        \begin{proof}
            We first choose one of the $\mathrm{pos}_m$ possible positions for the marking. We then sum over the possible choices of $w,s_\pm$.  There are two possibilities.
                \begin{itemize}
                    \item If $s_++s_->0$, we can assume the weight $w$ is bounded by $i$ since otherwise, we get multiplicity $0$ modulo $x^{i+1}$.
                    \item  If $s_+=s_-=0$, the weight $w$ may takes values from $1$ to $\tomega_m-M$, since the nerve has to keep a weight bigger than $M$. Furthermore, we start having a factor $\frac{1}{2}$ for the choices of nerves when $w\geqslant M$. For such a $w$, we have $(1-x^w)^2\equiv 1\modulo x^{i+1}$ since $M>2i\geq i$.
                \end{itemize}
            Thus by  Lemma \ref{lem-relation-mult-nerved-diagrams} we have to compute the following:
            \begin{align*}
                 & \sum_{w=1}^{i}\sum_{s_++s_->0}(1-x^w)^2x^{w(s_++s_-)} + \sum_{w=1}^{M-1}(1-x^w)^2+\sum_{w=M}^{\tomega_m-M}\frac{1}{2} \modulo x^{i+1} .
            \end{align*}
            To compute the first sum, we may add the values for $w$ going from $i+1$ to infinity since they contribute 0 modulo $x^{i+1}$. If $m\leq i$ we have the bound $s_-\leqslant m-1$, but $s_+$ can goes to $\infty$ since the excess terms contribute 0 modulo $x^{i+1}$. In that case we get for the first sum 
            \begin{align*}
                \sum_{w=1}^\infty (1-x^w)^2\left(\frac{1-x^{mw}}{(1-x^w)^2}-1\right) = & \sum_{w=1}^\infty \left[1-(1-x^w)^2 -x^{mw}\right] \\
                = & \sum_{w=1}^\infty \left[1-(1-x^w)^2\right] - \ang{m}.
            \end{align*}
            If $m \geq a-i$ we have the bound $s_+ \leq a-m-1$, but $s_-$ can goes to $\infty$ and the first sum gives
            \[ \sum_{w=1}^\infty \left[1-(1-x^w)^2\right] - \ang{a-m} .\]
            If $i<m<a-i$ then both $s_-$ and $s_+$ can go to $\infty$ so the first sum is
            \[ \sum_{w=1}^\infty \left[1-(1-x^w)^2\right] .\]
            The others two sums are
            \begin{align*}
              \sum_{w=1}^{M-1}(1-x^w)^2+\sum_{w=M}^{\tomega_m-M}\frac{1}{2} = & \sum_{w=1}^{M-1}\left[(1-x^w)^2-1\right] +M-1+\frac{\tomega_m-M-M+1}{2}  \\
              \equiv & \sum_{w=1}^\infty \left[(1-x^w)^2-1\right] +\frac{\tomega_m-1}{2} \modulo x^{i+1}.
            \end{align*}
            Putting all sums together, the two sums over $w$ cancel and we get the result.
        \end{proof}

\subsubsection{Integration over the space of genus $0$ diagrams.} In the computation of the genus $0$ asymptotic refined invariants, we encoded marked diagrams with words and proved that the set of words is in bijection with a subset of $\S(b)\times\S(b+\delta a)$. Elements of $\S(n)$ were assigned multiplicities
\[ \mu_{\S(n)}(\sfk)=(1-x)^n x^{\codeg(\sfk)} . \]

Recall that we have
maps $\ell_0, \ell_j^{(k)} : \S \to \ZZ_{\geq0}$ that give the lengths of the words of a sentence. Let $\L$ be the lengths space, \ie the space of non-negative integer sequences $(l_j^{(k)})_{j,k}$ with finite support, and $\pi$ be the map $\pi = (\ell_j^{(k)})_{j,k} : \S(n) \to \L$ that maps a sentence to the lengths of its words except the first one. 
To each element $\bfl = (l_j^{(k)})_{j,k} \in\L$, we assign a weight $\mu_\L(\bfl)=\prod_{j,k}x^{jl_j^{(k)}}$.
        
Formally, it is possible to see $\mu_{\S(n)}$ and $\mu_\L$ as measures on their corresponding domain, which are discrete spaces. These measures have values in the quotient ring $\ZZ[x]/(x^{i+1})$ for our choice of $i$. From this point of view, weighted sums become integrals. Moreover, this integral is $\ZZ[x]/(x^{i+1})$-linear. There are several reasons for such a consideration: it shortens notations, it becomes easier to see some computational steps, and it formalizes the deletion of diagrams with zero weight. The idea to compute the asymptotic refined invariant in genus $1$ is now to integrate the function given by Lemma \ref{lem-function-to-integrate-to get-genus-1} on the space of genus 0 diagrams.

Lemma \ref{lem-generating-series-T-words} states that $\mu_{\S(n)}$ and $\mu_\L$ have total weight $p(x)^2$, so that we may consider the normalized measures $\nu_{\S(n)}=\frac{1}{p(x)^2}\mu_{\S(n)}$ and $\nu_\L=\frac{1}{p(x)^2}\mu_\L$. For product spaces, we consider the product measures.
During the proof of Lemma \ref{lem-generating-series-T-words}, we have
$$\sum_{\substack{\sfk\in\S(n) \\ \pi(\sfk)=\bfl}} \mu_{\S(n)}(\sfk) = \mu_{\S(n)}(\pi^{-1}(\bfl)) = \prod_{j,k}x^{jl_j^{(k)}} = \mu_\L(\bfl).$$

\subsubsection{Some integral computations.} Before going through the main computation, we introduce some functions on $\L$ and $\S(n)$, and compute their integrals against the normalized measures. Consider first the lengths functions $\ell_j^{(k)}$, which are the coordinate functions on $\L$.

\begin{lem}
We have the following integrals: 
                $$\int_\L \ell_m^{(r)}\dd\nu_\L = \ang{m} , \ 
                    \int_\L (\ell_m^{(r)})^2\dd\nu_\L = \ang{m}+2\ang{m}^2 $$
                with $\ang{m}=\frac{x^m}{1-x^m}$.    
\end{lem}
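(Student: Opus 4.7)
The plan is to exploit the fact that $\mu_\L$ is a product measure: the weight $\mu_\L(\bfl) = \prod_{j\geq 1,\, k=1,2} x^{j\ell_j^{(k)}}$ factorizes across coordinates, and the normalizing constant $p(x)^2 = \prod_{j\geq 1,\, k=1,2}\frac{1}{1-x^j}$ matches this factorization exactly. Hence for any function $f$ depending only on the $(m,r)$-coordinate, integration against $\nu_\L$ reduces to a one-variable computation:
\[ \int_\L f(\ell_m^{(r)})\,\dd\nu_\L = \frac{1}{p(x)^2}\left(\sum_{l\geq 0} f(l)\,x^{ml}\right) \prod_{(j,k)\neq (m,r)} \frac{1}{1-x^j} = (1-x^m)\sum_{l\geq 0}f(l)\,x^{ml}. \]

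With this formula in hand, both statements become direct evaluations of classical generating series. For the first, I would apply $\sum_{l\geq 0} l\, y^l = y/(1-y)^2$ at $y = x^m$, giving
\[ \int_\L \ell_m^{(r)}\,\dd\nu_\L = (1-x^m)\cdot \frac{x^m}{(1-x^m)^2} = \frac{x^m}{1-x^m} = \ang{m}. \]
For the second, the identity $\sum_{l\geq 0} l^2\, y^l = y(1+y)/(1-y)^3$ produces
\[ \int_\L (\ell_m^{(r)})^2\,\dd\nu_\L = (1-x^m)\cdot\frac{x^m(1+x^m)}{(1-x^m)^3} = \frac{x^m(1+x^m)}{(1-x^m)^2}. \]

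It then remains to reconcile this expression with the claimed form $\ang{m}+2\ang{m}^2$. This is a one-line algebraic check using $x^m(1+x^m) = x^m(1-x^m) + 2x^{2m}$, after which the right-hand side splits into $\frac{x^m}{1-x^m}+2\frac{x^{2m}}{(1-x^m)^2}$, as desired. No step of the argument should pose an obstacle; the only subtlety is recognizing that the measure $\mu_\L$ assigns positive mass to sequences of arbitrarily large coordinates, which is allowed because the integrals take values in the truncated ring $\ZZ[x]/(x^{i+1})$, where excess terms contribute $0$, exactly as exploited in the proof of Lemma \ref{lem-generating-series-T-words}.
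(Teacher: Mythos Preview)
Your proof is correct and follows essentially the same approach as the paper: both exploit the product structure of $\mu_\L$ to reduce to a one-variable sum and then invoke the identities $\sum_{\alpha\geq 0}\alpha y^\alpha = y/(1-y)^2$ and $\sum_{\alpha\geq 0}\alpha^2 y^\alpha = y(1+y)/(1-y)^3$. Your write-up is slightly more explicit in verifying that $\frac{x^m(1+x^m)}{(1-x^m)^2} = \ang{m}+2\ang{m}^2$, which the paper leaves to the reader.
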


\begin{proof}
Indeed, by definition, we have
    \begin{align*}
    \int_\L \ell_m^{(r)}\dd\nu_\L &= \frac{1}{p(x)^2}\sum_{\bfl\in\L} l_m^{(r)}\prod_{j,k} x^{jl_j^{(k)}} \\
    &= \frac{1}{p(x)^2}\left(\sum_{l_m^{(r)}=0}^\infty l_m^{(r)}x^{ml_m^{(r)}}\right)\prod_{(j,k)\neq (m,r)}\left(\sum_{l_j^{(k)}=0}^\infty x^{jl_j^{(k)}}\right).
    \end{align*}
    We then use the identity $\sum_{\alpha=0}^\infty \alpha y^\alpha = \frac{y}{(1-y)^2}$. For the second integral, we use $\sum_{\alpha=0}^\infty \alpha^2 y^\alpha = \frac{y+y^2}{(1-y)^3}$.
\end{proof}

In fact, this method, which is an analog of Fubini's theorem, works for computing the integral of any monomial in the $\ell_j^{(k)}$: the integral of a monomial is equal to the product of integrals over each of the variables appearing in the monomial. Hence, it reduces down to the computation of the sums $\sum_{\alpha=0}^\infty \alpha^r y^\alpha$.

We then set $\ell_m=\ell_m^{(1)}+\ell_m^{(2)}$, so that we now have
        $$\int_\L \ell_m\dd\nu_\L = 2\ang{m} \text{ and }\int_\L \ell_m^2\dd\nu_\L = 2\ang{m}+6\ang{m}^2.$$
In particular, the following affine function 
\[ e_m=(1-x^m)\frac{\ell_m+2}{2} \]
defined on $\L$ has integral equal to $1$.

By composing with $\pi:\S(n)\to\L$, it is possible to pull-back functions on $\L$ to get functions on $\S(n)$. Due to the normalization by the total weight, their integrals are preserved.

\begin{defi}
We define on $\S(n)$ the \emph{leak function} $\phi_m[n](\sfk)$ equal to the number of letters with an index bigger than $m$. To get a function of $\bfl\in\L$, we average over the set $\pi^{-1}(\bfl)$ of sentences with lengths $\bfl$:
$$\varphi_m[n](\bfl)=\frac{1}{1-x^m}\frac{1}{\mu_\L(\bfl)}\int_{\pi^{-1}(\bfl)}\phi_m[n]\dd\mu_{\S(n)}.$$
\end{defi}

Lemma \ref{lem-expr-leak-function-in-l-monomials} expresses the function $\varphi_m[n](\bfl)$ in terms of the monomials $\ell_j$ on $\L$.

    \begin{rem}
        On the diagram side, the leak function $\phi_m$ corresponds to the number of ends skipping the floor $m$. It is also equal to $\omega_m-\tomega_m$, which is the complement of the weight between the floors $m$ and $m+1$ to the maximal possible weight $\omega_m=b+\delta m$.
    \end{rem}

\begin{lem}\label{lem-expr-leak-function-in-l-monomials}
     We have the following expressions on $\L$:
     $$\varphi_m[n](\bfl) = n \ang{m}+\psi_m(\bfl), \text{ where } \psi_m = \ang{m}\sum_{j=1}^m \frac{\ell_j}{\ang{j}}+\sum_{j=m+1}^\infty \ell_j. $$
     \end{lem}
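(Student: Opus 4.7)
The plan is a direct Fubini-type computation exploiting the product structure of sentences. The key observation is that the leak function decomposes as a sum of indicator functions, one per letter: for a sentence $\sfk$ with letters $\sfs_{a_1},\dots,\sfs_{a_n}$, one has $\phi_m[n](\sfk)=\sum_r \mathds{1}_{a_r\geq m}$. By linearity,
\[ \int_{\pi^{-1}(\bfl)} \phi_m[n]\,\dd\mu_{\S(n)} \;=\; \sum_{(j,k)}\ \sum_{\text{positions in }\sfS_j^{(k)}}\ \int_{\pi^{-1}(\bfl)} \mathds{1}[\text{that letter has index}\geq m]\,\dd\mu_{\S(n)}, \]
reducing the computation to that of the integral over a distinguished position.

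Next I would compute each inner integral. Letters of $\sfS_j^{(k)}$ range over $\{\sfs_*\}_{*\geq j}$, each contributing $(1-x)x^a$ to the measure. Forcing the distinguished letter to have index at least $m$ replaces its contribution by $(1-x)\cdot\frac{x^{\max(j,m)}}{1-x}=x^{\max(j,m)}$, while the remaining $l_j^{(k)}-1$ letters of the word contribute $\left(\frac{x^j}{1-x}\right)^{l_j^{(k)}-1}$ and every other word $\sfS_{j'}^{(k')}$ keeps its usual total weight $x^{j' l_{j'}^{(k')}}$. Assembling and dividing by $\mu_\L(\bfl)=\prod_{j,k}x^{jl_j^{(k)}}$ and by $1-x^m$, and splitting on whether $j\leq m$ or $j>m$ in $\max(j,m)$, I obtain
\[\varphi_m[n](\bfl) \;=\; \frac{1}{1-x^m}\!\left( \ell_0\, x^m + \sum_{j=1}^m \ell_j\, x^{m-j} + \sum_{j>m}\ell_j\right),\]
where $\ell_0$ is taken on the fiber $\pi^{-1}(\bfl)$, i.e. $\ell_0 = n-\sum_{j\geq 1}\ell_j$.

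Finally I would match this expression to the claimed $n\ang{m}+\psi_m(\bfl)$. Substituting $\ell_0 = n-\sum_{j\geq 1}\ell_j$ isolates the $n$-coefficient as exactly $n\frac{x^m}{1-x^m}=n\ang{m}$. The remaining $\ell_j$-terms with $j\leq m$ are rearranged using the identity $\frac{\ang{m}}{\ang{j}}=\frac{x^{m-j}(1-x^j)}{1-x^m}$, which exactly matches the $j$-th summand of $\ang{m}\sum_{j=1}^m \ell_j/\ang{j}$, while the terms for $j > m$ reduce directly to $\sum_{j>m}\ell_j$. The main technical subtlety is the Fubini step itself, where one must handle the two letter alphabets consistently (the word $\sfS_0$ allows index $0$, whereas $\sfS_j^{(k)}$ with $j\geq 1$ restricts to indices $\geq j$) and carry out the case split in $\max(j,m)$; once these are done, the remainder is routine algebra.
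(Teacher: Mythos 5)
Your proposal is correct and follows essentially the same route as the paper's proof: decompose $\phi_m[n]$ into per-letter indicators, compute each single-position integral via the product/geometric-series structure to obtain the factor $x^{(m-j)_+}\mu_\L(\bfl)$, sum to get $\ell_0 x^m+\sum_{j\le m}\ell_j x^{m-j}+\sum_{j>m}\ell_j$ over $1-x^m$, then eliminate $\ell_0$ and rearrange using $\ang{m}/\ang{j}\cdot(1-x^m)=x^{m-j}-x^m$. The only cosmetic slip is that your double sum $\sum_{(j,k)}\sum_{\text{positions in }\sfS_j^{(k)}}$ should explicitly include the $\sfS_0$ term, which you clearly account for later in any case.
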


\begin{proof}
    Let $\bfl\in\L$ and  $\sfk=(\sfS_0,(\sfS_j^{(k)})_{j,k}) \in \pi^{-1}(\bfl)$ be a sentence.
    In terms of the letters, the leak function $\phi_m[n]$ is
    $$\phi_m[n](\sfk)= \sum_{\sfs\in\sfS_0}\mathds{1}(p\geqslant m \text{ with }\sfs=\sfs_p) + \dsum_{j,k}\sum_{\sfs\in\sfS_j^{(k)}}\mathds{1}(p\geqslant m \text{ with }\sfs=\sfs_p).$$
    Indeed, the leak is due to the ends that skip the floor $m$, \ie the letters $\sfs_p$ with an index $p\geqslant m$. 
    We need to compute $\frac{1}{\mu_\L(\bfl)}\int_{\pi^{-1}(\bfl)}\mathds{1}(p\geqslant m \text{ with }\sfs=\sfs_p)\dd\mu_{\S(n)}$, for each term $\mathds{1}(p\geqslant m \text{ with }\sfs=\sfs_p)$  corresponding to a position of the letter $\sfs$ in the word $\sfS_j^{(k)}$. To do so, we proceed as in Lemma \ref{lem-generating-series-T-words}.
    At each letter position $\sfs'$ in $\sfS_{j'}^{(k')}$ except the one corresponding to $\sfs$, the sum over the possible values of the letter is the geometric series
        $$\sum_{p=j'}^\infty x^p = \frac{x^{j'}}{1-x}. $$
    For the position corresponding to $\sfs$, because of the condition $(p\ge m \text{ with } \sfs=\sfs_p)$ we have instead
        $$\sum_{p=j}^\infty \mathds{1}(p\geqslant m)x^p = \frac{x^{\max(j,m)}}{1-x}=x^{(m-j)_+}\frac{x^j}{1-x}, $$
    where $(m-j)_+=\max(m-j,0)$. As in Lemma \ref{lem-generating-series-T-words}, we conclude by making the product over all letter positions and we get
    \[ \int_{\pi^{-1}(\bfl)}\mathds{1}(p\geqslant m \text{ with }\sfs=\sfs_p)\dd\mu_{\S(n)} = x^{(m-j)_+}\prod_{j',k'}x^{j'l_{j'}^{(k')}} =x^{(m-j)_+}\mu_\L(\bfl) . \]
        
    Adding the above over all the letter positions in the word, we get
            
    \begin{align*}
        \frac{1}{\mu_{\L}(\bfl)}\int_{\pi^{-1}(\bfl)}\phi_m[n]\dd\nu_{\S(n)} &= \ell_0 x^m + \sum_{j=1}^m (\ell_j^{(1)}+\ell_j^{(2)})x^{m-j}+\sum_{j=m+1}^\infty (\ell_j^{(1)}+\ell_j^{(2)}) \\
        &= \left(n-\sum_{j=1}^\infty \ell_j\right)x^m +\sum_{j=1}^m \ell_j x^{m-j} + \sum_{j=m+1}^\infty \ell_j \\
        &= n x^m + \sum_{j=1}^m \ell_j(x^{m-j}-x^m) +(1-x^m)\sum_{j=m+1}^\infty \ell_j \\
        &= (1-x^m)\left[ n\ang{m} + \ang{m}\sum_{j=1}^m \frac{\ell_j}{\ang{j}}+\sum_{j=m+1}^\infty \ell_j \right].
    \end{align*}
    \end{proof}

    \begin{lem}
 We have the following integral:
      $$\int_\L e_m\psi_m \dd\nu_\L = (2m+1)\ang{m}+2\sum_{j=m+1}^\infty\ang{j}.$$
    \end{lem}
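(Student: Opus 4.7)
The plan is to treat $\nu_\L$ as the product probability measure on $\L$ under which each coordinate $\ell_j^{(k)}$ is an independent ``geometric'' variable with $(1-x^j) x^{jl_j^{(k)}}$ as weight. Indeed, $\mu_\L(\bfl) = \prod_{j,k} x^{j l_j^{(k)}}$ and the normalizing factor $p(x)^2 = \prod_j (1-x^j)^{-2}$ splits into one factor $(1-x^j)$ per coordinate $\ell_j^{(k)}$. Under this measure, the single-variable integrals already computed give $\int_\L \ell_m \dd\nu_\L = 2\ang{m}$, $\int_\L \ell_m^2 \dd\nu_\L = 2\ang{m}+6\ang{m}^2$, and by independence $\int_\L \ell_m \ell_j \dd\nu_\L = 4\ang{m}\ang{j}$ for $j \neq m$.

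From these, I would compute the two key mixed integrals
\[ \tfrac{1}{2}\int_\L (\ell_m + 2)\ell_j \dd\nu_\L = 2\ang{j}(\ang{m}+1) \quad \text{for } j\neq m, \]
and
\[ \tfrac{1}{2}\int_\L (\ell_m + 2)\ell_m \dd\nu_\L = 3\ang{m}(1+\ang{m}). \]
Both expressions have the common factor $(1+\ang{m})$, and this is precisely what will collapse the sums in $\psi_m$.

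Substituting into $e_m\psi_m = (1-x^m)\frac{\ell_m+2}{2}\bigl[\ang{m}\sum_{j=1}^m \frac{\ell_j}{\ang{j}} + \sum_{j=m+1}^\infty \ell_j\bigr]$ and using linearity of the integral, the first bracketed sum becomes $\ang{m}\bigl[\sum_{j=1}^{m-1} 2(\ang{m}+1) + 3(\ang{m}+1)\bigr] = (2m+1)\ang{m}(1+\ang{m})$, since the $\frac{1}{\ang{j}}$ factors cancel the $\ang{j}$ arising from each $j<m$ term. The second sum gives $2(1+\ang{m})\sum_{j=m+1}^\infty \ang{j}$. Factoring $(1+\ang{m})$ out yields
\[ \int_\L e_m\psi_m \dd\nu_\L = (1-x^m)(1+\ang{m})\Bigl[(2m+1)\ang{m} + 2\sum_{j=m+1}^\infty \ang{j}\Bigr]. \]
The proof then closes with the elementary identity $(1-x^m)(1+\ang{m}) = (1-x^m) + x^m = 1$, producing the claimed formula. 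There is no real obstacle: the only subtlety is verifying independence of the $\ell_j^{(k)}$, which follows immediately from the product structure of $\mu_\L$, and the algebraic collapse relies on the factorization of $(1+\ang{m})$ from both the diagonal and off-diagonal contributions.
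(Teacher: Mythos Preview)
Your proof is correct and follows essentially the same approach as the paper: both compute $\int_\L e_m\ell_j\,\dd\nu_\L$ term by term using the known moments of the $\ell_j$ and their independence, then sum according to the definition of $\psi_m$. The only cosmetic difference is that the paper absorbs the factor $(1-x^m)$ into $e_m$ from the outset and uses $\int_\L e_m\,\dd\nu_\L=1$ to get $\int_\L e_m\ell_j\,\dd\nu_\L=\int_\L\ell_j\,\dd\nu_\L=2\ang{j}$ for $j\neq m$ directly, whereas you carry $(1-x^m)$ along and cancel it at the end via $(1-x^m)(1+\ang{m})=1$; these are the same computation arranged in two equivalent orders.
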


            \begin{proof}
                We use the expression of $\psi_m$ in terms of the $\ell_j$, and the following computations:
                \[ \int_\L e_m\ell_j \dd\nu_\L=\left\{ \begin{array}{ll}
                   2\ang{j}  & \text{ if } j\neq m, \\
                   3\ang{m}  & \text{ if } j=m. 
                \end{array}\right. \]
                Hence, $\int e_m \ell_j=\int \ell_j$ except for $j=m$, where we add $\ang{m}$. Thus, we have
                \begin{align*}
                  \int_\L e_m\psi_m & = \int_\L\psi_m+\ang{m} \\
                  & =  \ang{m}\sum_{j=1}^m \frac{2\ang{j}}{\ang{j}}+\sum_{m+1}^\infty 2\ang{j} +\ang{m} \\
                  & = (2m+1)\ang{m}+2\sum_{j=m+1}^\infty\ang{j}.
                \end{align*}
            \end{proof}

So far, we defined functions on $\S(n)$. The genus $0$ marked diagrams of codegree smaller than $i$ are in bijection with a subset of $\S(b)\times\S(b+\delta a)$. By definition, the complement of this subset has measure $0$ since it consists of elements with codegree strictly bigger than $i$. Let $\rho_1$, $\rho_2$ be the projections of $\S(b)\times\S(b+\delta a)$ to $\S(b)$ and $\S(b+\delta a)$. We can thus pull-back functions by $\rho_1$ and $\rho_2$ and obtain the following functions.
\begin{itemize}
    \item The number $\mathrm{pos}_m$ of positions for a marking between the floors $m$ and $m+1$ is:
        \begin{itemize}[label=$\circ$]
            \item equal to $\rho_1^*\ell_m+2=\rho_1^*\ell_m^{(1)}+\rho_1^*\ell_m^{(2)}+2$ if $m \leq i$, where $\ell_m$ is pull-back from $\S(b)$,
            \item equal to $2$ if $i < m < a-i$, since the length functions are $0$,
            \item equal to $\rho_2^*\ell_{a-m}+2$ if $m \geq a-i$, where $\ell_{a-m}$ is now pull-back from $\S(b+\delta a)$ instead of $\S(b)$.
         \end{itemize}
    \item We have the same phenomenon for the leak function on a diagram: for $m \leq i$, it is the pull-back of the leak function on $\S(b)$, then it is $0$ for $i < m < a-i$, and gets pulled-back from $\S(b+\delta a)$ for $m \geq a-i$.
\end{itemize}

\subsubsection{Computation of the asymptotic refined invariant}

For $n$ a positive integer, we consider the function $\sigma_1(n) = \sum_{d|n}d$, and its generating series
\[ E_2(x) = \dsum_{n\geq1} \sigma_1(n) x^n . \]
            
\begin{lem}\label{lem-computation-eisenstein-series}
    One has
    \[ E_2(x) = \sum_{n=1}^\infty n\frac{x^n}{1-x^n}=\sum_{n=1}^\infty\frac{x^n}{(1-x^n)^2}=\sum_{n=1}^\infty\sum_{j=n}^\infty\frac{x^j}{1-x^j} . \]
\end{lem}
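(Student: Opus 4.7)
The plan is to verify each of the three equalities by direct power series manipulations, exploiting geometric series expansions and interchanges of summation, which are harmless here because every coefficient involved is non-negative and all sums converge in the formal power series ring $\ZZ[\![x]\!]$.

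For the first identity, I would expand $\frac{x^n}{1-x^n} = \sum_{k\geq 1} x^{nk}$, giving
\[
\sum_{n\geq 1} n\,\frac{x^n}{1-x^n} = \sum_{n,k\geq 1} n\, x^{nk}.
\]
Regrouping by $m = nk$, the coefficient of $x^m$ is $\sum_{d \mid m} d = \sigma_1(m)$, so the series equals $E_2(x)$ by definition.

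For the second identity, I would use the derivative identity $\frac{x^n}{(1-x^n)^2} = \sum_{k\geq 1} k\,x^{nk}$ (obtained by differentiating $\frac{1}{1-x^n}$ with respect to $x^n$ and multiplying by $x^n$). Then
\[
\sum_{n\geq 1}\frac{x^n}{(1-x^n)^2} = \sum_{n,k\geq 1} k\,x^{nk},
\]
and collecting by $m = nk$, the coefficient of $x^m$ becomes $\sum_{d \mid m} \frac{m}{d} = \sigma_1(m)$, which is again $E_2(x)$. Equivalently, the two double sums $\sum_{n,k} n\,x^{nk}$ and $\sum_{n,k} k\,x^{nk}$ are exchanged by swapping the roles of $n$ and $k$.

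For the third identity, the cleanest approach is to swap the order of summation in $\sum_{n\geq 1}\sum_{j\geq n}\frac{x^j}{1-x^j}$: each term $\frac{x^j}{1-x^j}$ appears once for every $n$ with $1 \leq n \leq j$, so
\[
\sum_{n\geq 1}\sum_{j\geq n}\frac{x^j}{1-x^j} = \sum_{j\geq 1} j\,\frac{x^j}{1-x^j},
\]
which matches the first expression and hence $E_2(x)$. There is no real obstacle in this lemma; the only thing to check is the legality of reindexing, which is automatic in $\ZZ_{\geq 0}[\![x]\!]$.
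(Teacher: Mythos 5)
Your proof is correct and follows the same approach as the paper: expand the geometric series $\frac{1}{1-x^n}$ to identify the first two expressions with $E_2(x)$ via divisor regrouping, and swap the order of summation for the third. You have simply spelled out the details that the paper leaves to the reader.
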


\begin{proof}
    Expanding $\frac{1}{1-x^n}$ yields the first two expressions for $E_2(x)$. The last expression yields the first one when switching the sums over $n$ and $j$.
\end{proof}

\begin{theo}\label{theo-AR-genus1-hirzebruch}
    The genus $1$ asymptotic refined invariant of Hirzebruch surfaces is given by
        $$AR_1^{\FF_\delta} = p(x)^4\left(g_{\max}-12 E_2(x) \right),$$
    where $g_{\max}=\frac{1}{2}(a-1)(2b+\delta a-2)$ is the genus of a smooth curve in the class $aE+bF$.
\end{theo}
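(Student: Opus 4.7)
The plan is to reduce the genus $1$ count to an integral over the space of genus $0$ marked diagrams, using the nerved-diagram construction from the beginning of this section. Fix $i \geq 1$, choose $M > i$, and take a class $aE+bF$ with $a > 2i$ and $b > 2M+i$. The forgetful map $\mathrm{ft} : \tDDD_1 \to \tDDD_0$ together with Lemma \ref{lem-function-to-integrate-to get-genus-1} let me rewrite
\[
\widetilde{BG}_1^{\FF_\delta}(aE+bF) \equiv \sum_{\tilde{\D} \in \tDDD_0} \mu(\tilde{\D}) \sum_{m=1}^{a-1} \mathrm{pos}_m \left(\frac{\tilde{\omega}_m - 1}{2} - d_m\right) \pmod{x^{i+1}},
\]
and I split the sum over $m$ into a \emph{middle} zone $i < m < a-i$, where $\mathrm{pos}_m = 2$, $\tilde{\omega}_m = \omega_m = b+\delta m$, and $d_m = 0$, and two \emph{boundary} zones where the relevant data is pulled back from sentences in $\S(b)$ at the bottom and in $\S(b+\delta a)$ at the top.

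The middle zone has constant integrand $\omega_m - 1$, contributing $\sum_{i<m<a-i}(\omega_m - 1)\, p(x)^4$ by Theorem \ref{theo-AR-genus0-hirzebruch}. For each boundary zone, I will compute, against the normalized measures of Section \ref{sec-genus0}, the three integrals
\[
\int \mathrm{pos}_m\, d\nu = 2(1+\ang{m}),\qquad \int \mathrm{pos}_m \cdot d_m\, d\nu = 2\ang{m}(1+\ang{m}),
\]
\[
\int \mathrm{pos}_m \cdot \phi_m[n]\, d\nu = 2n\ang{m} + 2(2m+1)\ang{m} + 4\sum_{j>m}\ang{j},
\]
the last one following from $\varphi_m[n] = n\ang{m} + \psi_m$, $\int e_m\, d\nu_\L = 1$, and the value of $\int e_m \psi_m\, d\nu_\L$. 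Writing $\tilde{\omega}_m = \omega_m - \phi_m$ and using $\omega_m - b = \delta m$ at the bottom, the bottom-side contribution produces a term $\delta m \ang{m}$; at the top, parametrized by $m' = a-m$, the identity $\omega_{a-m'} - (b+\delta a) = -\delta m'$ produces $-\delta m' \ang{m'}$; these cancel when the two zones are added.

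Once the $\delta$-linear boundary terms cancel, the $\ang{\cdot}$-free pieces from the three zones combine to $\sum_{m=1}^{a-1}(\omega_m - 1) = g_{\max}$. The residual, $a$- and $b$-independent tail is
\[
\sum_{m \geq 1}\left[-(4m+8)\ang{m} - 4\sum_{j>m}\ang{j} - 4\ang{m}^2\right].
\]
By Lemma \ref{lem-computation-eisenstein-series}, $E_2(x) = \sum_n n\ang{n} = \sum_n \ang{n} + \sum_n \ang{n}^2 = \sum_n \sum_{j \geq n}\ang{j}$, which gives $\sum_m \ang{m}^2 = E_2(x) - \sum_m \ang{m}$ and $\sum_m \sum_{j>m}\ang{j} = E_2(x) - \sum_m \ang{m}$. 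Each of the three terms then contributes $-4E_2(x)$ plus a multiple of $\sum_m \ang{m}$, and the three corrections $-8+4+4$ cancel, leaving exactly $-12E_2(x)$. Since this holds modulo $x^{i+1}$ for every $i$ with an $i$-independent limit, we conclude $AR_1^{\FF_\delta} = p(x)^4(g_{\max} - 12E_2(x))$. The main bookkeeping obstacle will be organizing the boundary integrals so that the $\delta$-linear terms cancel transparently between top and bottom, and so that the remainder is manifestly proportional to $E_2(x)$.
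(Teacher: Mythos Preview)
Your proposal is correct and follows essentially the same route as the paper: the nerved-diagram reduction via Lemma \ref{lem-function-to-integrate-to get-genus-1}, the split into middle and two boundary zones, the expression of $\varphi_m[n]$ via Lemma \ref{lem-expr-leak-function-in-l-monomials}, the integral $\int e_m\psi_m$, and the identities of Lemma \ref{lem-computation-eisenstein-series} for $E_2$. The only cosmetic difference is the bookkeeping: you expand $\mathrm{pos}_m\cdot(\frac{\tilde\omega_m-1}{2}-d_m)$ into the three basic integrals $\int\mathrm{pos}_m$, $\int\mathrm{pos}_m\cdot d_m$, $\int\mathrm{pos}_m\cdot\phi_m$ and cancel the $\delta$-linear pieces between top and bottom before simplifying the symmetric residual to $-12E_2$, whereas the paper keeps the $e_m$-normalization throughout and computes each boundary separately to $(\delta-6)E_2$ and $-(\delta+6)E_2$ before adding.
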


\begin{proof}
The computation of the asymptotic refined invariant goes through three steps: expressing then integrating over $\S(b)\times\S(b+\delta a)$ the function from Lemma \ref{lem-function-to-integrate-to get-genus-1}, and summing these integrals over $m$ from $1$ to $a-1$.
                
$\circ$ \textbf{First step: expression over $\S(b)\times\S(b+\delta a)$.} By Lemma \ref{lem-function-to-integrate-to get-genus-1}, the function giving the sum of multiplicities for insertion of a marking between the floors $m$ and $m+1$ is given by $\mathrm{pos}_m\left( \frac{\tomega_m-1}{2}-d_m\right)$ and has the following values:
\[ \left\{ \begin{array}{ll}
        (\rho_1^*\ell_m+2)\left(\dfrac{\omega_m-\rho_1^*\phi_m[b]-1}{2}-\ang{m}\right)  & \text{ if }m\leq i, \\
        \omega_m-1  &  \text{ if }i<m<a-i \\
        (\rho_2^*\ell_{a-m}+2)\left(\dfrac{\omega_m-\rho_2^*\phi_{a-m}[b+\delta a]-1}{2}-\ang{a-m}\right) & \text{ if }m\geq a-i,
    \end{array} \right.\]
where in the first (resp. last) row, functions are pull-back from $\S(b)$ (resp. $\S(b+\delta a)$).
For each value of $m$, we now need to integrate the above function, and then sum over $1\leq m\leq a-1$.

\medskip

 $\circ$ \textbf{Second step: integration over $\S(b)\times\S(b+\delta a)$.} If $i<m<a-i$, we have
    $$\int_{\S(b)\times\S(b+\delta a)}(\omega_m-1)\dd \nu=\omega_m-1.$$
                
Assume now that $m\leqslant i$. Since $\int_{\S(b+\delta a)}1=1$ we have
    \begin{align*}
     & \int_{\S(b)\times\S(b+\delta a)}(\rho_1^*\ell_m+2)\left(\dfrac{\omega_m-\rho_1^*\phi_m[b]-1}{2}-\ang{m}\right) \\
     = & \int_{\S(b)}(\ell_m+2)\left(\dfrac{\omega_m-\phi_m[b]-1}{2}-\ang{m}\right).
     \end{align*}
Recall that we set $e_m=(1-x^m)\frac{\ell_m+2}{2}$ so the integrand rewrites
\begin{align*}
    & e_m(\omega_m-1) + e_m\left( (\omega_m-1)\ang{m}-\frac{\phi_m[b]}{1-x^m}-2\frac{x^m}{(1-x^m)^2}\right). 
\end{align*} 
To compute the integral over $\S(b)$, we first regroup over each $\pi^{-1}(\bfl)$ considering $\frac{1}{\mu_\L(\bfl)}\int_{\pi^{-1}(\bfl)} $. This way, we get a function to integrate over $\L$. This function is
    $$e_m (\omega_m-1) + e_m\left((\omega_m-1)\ang{m}-\varphi_m[b] -2\frac{x^m}{(1-x^m)^2}\right) .$$
Because $\int_\L e_m =1$ we have $\int_\L e_m(\omega_m-1)=\omega_m-1$. It remains to compute the integral of the correction term $ e_m\left((\omega_m-1)\ang{m}-\varphi_m[b] -2\frac{x^m}{(1-x^m)^2}\right)$. As $m$ is close to $1$ one has:
    \[ \varphi_m[b] = b\ang{m}+\psi_m \ \text{ and } \ \omega_m = b+\delta m . \] 
We finally get 
    \begin{align*}
    &\ \int_\L e_m\left( (\omega_m-1)\ang{m} - \varphi_m[b] -2\frac{x^m}{(1-x^m)^2}\right) d \nu_\L  \\
        = &\ \int_\L e_m\left( (b+\delta m-1)\ang{m} - b\ang{m}-\psi_m -2\frac{x^m}{(1-x^m)^2}\right) d \nu_\L  \\
        = &\ \int_\L e_m \left( (\delta m-1)\ang{m}-\psi_m -2\frac{x^m}{(1-x^m)^2}\right) d \nu_\L  \\
        = &\ (\delta m-1)\ang{m}  - (2m+1)\ang{m}-2\sum_{j=m+1}^\infty \ang{j} - 2\frac{x^m}{(1-x^m)^2}  \\
        = &\ (\delta-2)m\frac{x^m}{1-x^m} -2\sum_{j=m}^\infty\ang{j} -2\frac{x^m}{(1-x^m)^2} 
    \end{align*}
\ie 
    \begin{align*} 
     &\ \int_{\S(b)\times\S(b+\delta a)} \mathrm{pos}_m\left( \frac{\tomega_m-1}{2}-d_m\right) \\
     = &\ \omega_m-1 + (\delta-2)m\frac{x^m}{1-x^m} -2\sum_{j=m}^\infty\ang{j} -2\frac{x^m}{(1-x^m)^2} .
     \end{align*}
                
If $m \geq a-i$, with $m'=a-m$ similar computations lead to
    \begin{align*} 
    &\ \int_{\S(b)\times\S(b+\delta a)} \mathrm{pos}_m\left( \frac{\tomega_m-1}{2}-d_m\right) \\
    =&\ \omega_m-1 - (\delta+2)m'\frac{x^{m'}}{1-x^{m'}}-2\sum_{j=m'}^\infty \ang{j} -2\frac{x^{m'}}{(1-x^{m'})^2}.
    \end{align*}

\medskip

$\circ$ \textbf{Third step: summation over the values of $m$.} We have several sums to compute.
\begin{itemize}
    \item Whatever the value of $m$ is, the term $\omega_m-1$ appears. We need to sum these terms, and one has
$$\sum_{m=1}^{a-1}(\omega_m-1)=g_{\max},$$
since it is the number of interior lattice points of the associated Newton polygon.
                
     \item We have to sum the correction terms for $1\leq m\leq i$. Since the formula for the correction term gives $0$ modulo $x^{i+1}$ when $m>i$, we let $m$ goes to $\infty$. By Lemma \ref{lem-computation-eisenstein-series}, the sum of the correction terms is
         $$(\delta -2)E_2(x)-2E_2(x)-2E_2(x)=(\delta-6)E_2(x).$$
                
    \item For the correction terms for $a-i \leq m \leq a-1$, with $m'=a-m$ we sum over $m'$ going from $1$ to $\infty$ and get
                $$ -(\delta+6)E_2(x).$$
                    \end{itemize}
                Adding the three contributions, we obtained $g_{\max}-12E_2(x)$. Multiplying by the total weight of the space $p(x)^4$ finishes the computation.
\end{proof}

\subsection{The case of $h$-transverse toric surfaces.}

The computations made in the Hirzebruch case remain valid with two differences. First, we now need to take into account the sloping pairs of the floors. The marked diagrams of genus 0 and codegree at most $i$ are in bijection with a subset of $\S(b^\bot)\times\S(b^\top)\times\P^{\chi-4}$, where $\chi$ is the number of corners of the polygon and $\P$, defined in Section \ref{sec-definition-pearl}, encodes the default of growth of the slopes. Second, the self-intersection of the divisors corresponding to the top and bottom horizontal sides, equal to $\delta$ and $-\delta$ in the Hirzebruch case, are not opposite anymore, see Lemma \ref{lem-toric-surface-euler-char-self-intersection}.

    \begin{lem}\label{lem-generating-series-corners-g1case}
        We have the following generating series:
        $$\sum_{\pfk\in\P}\codeg(\pfk) x^{\codeg(\pfk)}= E_2(x)p(x).$$
    \end{lem}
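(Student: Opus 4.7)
The plan is to reuse the bijection from the proof of Lemma \ref{lem-generating-series-corners} between $\P$ and the set of integer sequences with finite support. Recall that under that bijection, an element $\pfk \in \P$ corresponds to a sequence $u(\pfk) = (u_j)_{j \geq 1}$ with only finitely many nonzero terms, and the codegree is expressed as $\codeg(\pfk) = \sum_{j=1}^\infty j u_j$. The generating series we want to compute therefore becomes
$$\sum_{\pfk\in\P}\codeg(\pfk)\, x^{\codeg(\pfk)} = \sum_{u} \left(\sum_{j=1}^\infty j u_j\right) \prod_{j=1}^\infty x^{j u_j}.$$

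From this expression I would factor in the same way as in Lemma \ref{lem-generating-series-corners}: working modulo $x^{i+1}$ so that all sums are finite and all manipulations justified, I would exchange the order of summation between $j$ and $u$, isolate the variable $u_j$, and compute
$$\sum_{u_j \geq 0} u_j x^{j u_j} = \frac{x^j}{(1-x^j)^2}, \qquad \sum_{u_k \geq 0} x^{k u_k} = \frac{1}{1-x^k}.$$
This produces
$$\sum_{\pfk\in\P}\codeg(\pfk)\, x^{\codeg(\pfk)} = \sum_{j=1}^\infty \frac{j\, x^j}{(1-x^j)^2} \prod_{k\neq j} \frac{1}{1-x^k} = \left(\sum_{j=1}^\infty \frac{j\, x^j}{1-x^j}\right) \prod_{j=1}^\infty \frac{1}{1-x^j}.$$

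The first factor is exactly $E_2(x)$ by the first identity of Lemma \ref{lem-computation-eisenstein-series}, and the second is $p(x)$ by definition, giving the claimed equality $E_2(x) p(x)$. An equivalent compact way to present the computation is to observe that $\sum_{\pfk} \codeg(\pfk) x^{\codeg(\pfk)} = x \frac{d}{dx}\left(\sum_{\pfk} x^{\codeg(\pfk)}\right) = x p'(x)$, and then use $x\frac{d}{dx}\log p(x) = \sum_{j\geq 1} \frac{j x^j}{1-x^j} = E_2(x)$. There is no real obstacle here: the argument is purely formal manipulation of generating series, and the only mild care is that the equalities must first be verified modulo $x^{i+1}$ for each $i$ (as in Lemma \ref{lem-generating-series-corners}) before passing to the limit in $\ZZ[\![x]\!]$.
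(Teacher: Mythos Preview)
Your proof is correct. The paper's own proof takes exactly the ``equivalent compact way'' you mention at the end: it applies $x\frac{d}{dx}$ to the identity $\sum_{\pfk\in\P} x^{\codeg(\pfk)} = p(x)$ from Lemma~\ref{lem-generating-series-corners}, computes the logarithmic derivative of the product $\prod_j (1-x^j)^{-1}$, and identifies the result as $E_2(x)p(x)$ via Lemma~\ref{lem-computation-eisenstein-series}. Your primary route (reopening the bijection and summing directly over the $u_j$) is of course the same computation unpacked, and both are equally valid.
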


    \begin{proof}
        We computed in Lemma \ref{lem-generating-series-corners} the generating series of $x^{\codeg(\pfk)}$, so we just need to differentiate the relation, multiply by $x$ and use Lemma \ref{lem-computation-eisenstein-series}:
        \begin{align*}
            x\frac{\mathrm{d}}{\mathrm{d}x}\prod_{j=1}^\infty \frac{1}{1-x^j} &= \sum_{m=1}^\infty m\frac{x^m}{(1-x^m)^2}\prod_{j\neq m}\frac{1}{1-x^j} \\
            &= \left( \sum_{m=1}^\infty m\frac{x^m}{1-x^m} \right)\prod_{j=1}^\infty\frac{1}{1-x^j} \\
            &= E_2(x)p(x). 
        \end{align*}
    \end{proof}

\begin{theo}\label{theo-AR-genus1-general}
Let $X$ be a toric surface with Euler characteristic $\chi$ associated to an $h$-transverse, horizontal and non-singular polygon. Let $g_{\max}=1+\frac{\beta^2+K_X\cdot\beta}{2}$ be the polynomial function on $H_2(X,\ZZ)$ that gives the genus of a smooth curve in the class $\beta$. The genus $1$ asymptotic refined invariant is given by

\[ AR^X_1 = p(x)^\chi \left(g_{\max}-12E_2(x) \right). \]
\end{theo}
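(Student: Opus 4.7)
The strategy mirrors that of Theorem \ref{theo-AR-genus1-hirzebruch}: one constructs genus $1$ nerved marked diagrams by adjoining an edge to genus $0$ ones, then sums the resulting multiplicities via the analog of Lemma \ref{lem-function-to-integrate-to get-genus-1}. The key observation is that the edge-insertion formula $\mathrm{pos}_m \bigl(\tfrac{\tilde\omega_m-1}{2} - d_m\bigr)$ depends on the sloping pairs $(L,R)$ only through $\tilde\omega_m$, since $\mathrm{pos}_m$ and $d_m$ are determined by the end-words alone. The computation therefore factorizes along the bijection (from the proof of Theorem \ref{theo-AR-genus0-general}) between genus $0$ marked diagrams of codegree $\leq i$ and a suitable subset of $\S(b^\bot)\times\S(b^\top)\times\P^{\chi-4}$.

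First one integrates over $\S(b^\bot)\times\S(b^\top)$ exactly as in the Hirzebruch case. The main term $\sum_m(\omega_m-1)=g_{\max}$ is unchanged. For $\beta$ large enough that the bottom-most (resp.\ top-most) side segments of $\Delta_\beta$ have integral length greater than $i$, one has $\omega_m=b^\bot+\delta_\bot m$ (resp.\ $\omega_{a-m'}=b^\top+\delta_\top m'$) throughout the relevant range, where $\delta_\bot:=-(k_\bot^L+k_\bot^R)$ and $\delta_\top:=k_\top^L+k_\top^R$ are read off the slopes of the edges of $\Delta_\beta$ adjacent to the horizontal sides. The three-step computation of Theorem \ref{theo-AR-genus1-hirzebruch} then yields bottom and top end-word corrections $(\delta_\bot-6)E_2(x)$ and $(\delta_\top-6)E_2(x)$. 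A standard toric-surface computation gives $D_\bot^2=-\delta_\bot$ and $D_\top^2=-\delta_\top$, so Lemma \ref{lem-toric-surface-euler-char-self-intersection} yields $\delta_\bot+\delta_\top=\chi-4$ and the two contributions combine to $(\chi-16)E_2(x)$.

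The new ingredient is the integration over $\P^{\chi-4}$. For each non-horizontal corner $c$ with configuration $\pfk_c\in\P$, the balance equation $\omega_m=b^\bot-\sum_{j\leq m}(L(v_j)+R(v_j))$ implies, by a short binary-sequence computation, that the total weight decrease $\sum_m(\omega_m-\tilde\omega_m)$ induced by $\pfk_c$ is exactly $\codeg(\pfk_c)$. Integrating against $\nu_\P$ and applying Lemma \ref{lem-generating-series-corners-g1case} produces a contribution $-E_2(x)$ per non-horizontal corner, hence $-(\chi-4)E_2(x)$ in total. Combining all pieces yields
\[ (\chi-16)E_2(x)-(\chi-4)E_2(x)=-12\,E_2(x), \]
and multiplying by the total measure $p(x)^\chi$ gives the claimed formula. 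The hard part is the identification of the $\pfk_c$-induced decrease with $\codeg(\pfk_c)$: this identity is precisely what prevents the extra $p(x)^{\chi-4}$ factor inherited from the genus-$0$ computation from producing a $\chi$-dependent coefficient of $E_2(x)$ in genus $1$.
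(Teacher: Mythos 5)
Your proof is correct and follows essentially the same route as the paper's: fix the sloping-pair data $\Pfk\in\P^{\chi-4}$, run the Hirzebruch-style integration over $\S(b^\bot)\times\S(b^\top)$ (noting that the edge-insertion formula sees $\Pfk$ only through $\tilde\omega_m$, which reduces the main term by $\codeg(\Pfk)$), compute the top/bottom corrections $(\delta_\bot-6)E_2$ and $(\delta_\top-6)E_2$, then integrate over $\P^{\chi-4}$ via Lemmas \ref{lem-generating-series-corners} and \ref{lem-generating-series-corners-g1case} and finish with Lemma \ref{lem-toric-surface-euler-char-self-intersection}. The only difference is cosmetic bookkeeping: the paper folds $-\codeg(\Pfk)$ into $\sum_m(\omega^\Pfk_m-1)=g_{\max}-\codeg(\Pfk)$ and keeps the boundary corrections in the form $-(12-\delta_\top-\delta_\bot)E_2$ before summing over $\Pfk$, whereas you list the $-\codeg(\Pfk)$ piece and the $(\chi-16)E_2$ piece separately, arriving at the same cancellation.
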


\begin{proof}
    We proceed as in the Hirzebruch case. We assume each $\beta\cdot D$ is large enough for any toric divisor $D$. Consider $\Pfk=(\pfk_c)_c \in\P^{\chi-4}$ where we choose an element of $\P$ for each corner non-adjacent to a horizontal side. For a given $\Pfk$, let $\omega^\Pfk_m$ be the maximum weight between the floors $m$ and $m+1$ in a diagram obtained with the choice of sloping pairs determined by $\Pfk$. It differs from the total weight in the codegree $0$ case $\omega_m$ in the following way: any element $\pfk\in\P$ is the product of exactly $\codeg(\pfk)$ transpositions, and each of them reduces the weight at the position of the transposition by $1$. So one has
    \begin{align*}
    \sum_{m=1}^{a-1}\left(\omega^\Pfk_m-1\right) &= \sum_{m=1}^{a-1}\left(\omega_m-1\right) -\sum_c \codeg(\pfk_c) \\
    &= g_{\max}-\sum_c\codeg(\pfk_c),
    \end{align*}     
    where the sum is indexed by the corners non-adjacent to a horizontal side.

    Let $\delta_\top$ (resp. $\delta_\bot$) be minus the self-intersection of the top (resp. bottom) toric divisor. For a fixed choice of $\Pfk=(\pfk_c)_c \in\P^{\chi-4}$, the contribution of $\Pfk$ to the asymptotic refined invariant is computed as in the Hirzebruch case and is 
    \[  x^{\sum_c\codeg(\pfk_c)} p(x)^4\left[ \sum_{m=1}^{a-1}\left(\omega^{\Pfk}_m-1\right) -(12-\delta_\top-\delta_\bot)E_2(x) \right]. \]

    We now replace the sum of weights by its expression in the $\codeg(\pfk_c)$ and sum over all the possible $\mathfrak{P}=(\pfk_c)_c$. We get modulo $x^{i+1}$:
    \[
        \sum_{\substack{\mathfrak{P}=(\pfk_c)_c \\ \codeg(\pfk_c)\leqslant i}}x^{\sum\codeg(\pfk_c)}p(x)^4  \left[ \left(g_{\max}-\sum_{c}\codeg(\pfk_c)\right)  -(12-\delta_\top-\delta_\bot)E_2(x) \right] \]
    As we only care about the sum modulo $x^{i+1}$, we may add all the elements in $\P$ since the ones with higher codegree will contribute 0. There are $\chi-4$ corners where we choose an element $\pfk\in\P$. Using Lemma \ref{lem-generating-series-corners} and \ref{lem-generating-series-corners-g1case} to compute the generating series, we get
    \begin{align*}
        p(x)^{\chi-4}p(x)^4\left[ g_{\max}-(\chi-4)E_2(x) -(12-\delta_\top-\delta_\bot)E_2(x) \right] \modulo x^{i+1}. 
    \end{align*}
    Finally, Lemma \ref{lem-toric-surface-euler-char-self-intersection} allows us to conclude.
\end{proof}

\begin{rem}
    The method can be adapted by adding two additional edges and compute the genus $2$ asymptotic refined invariant, and probably more, at the cost of lengthy computations.
\end{rem}

\appendix

\section{Extension of the results to Göttsche-Schroeter invariants}

Genus $0$ Block-Göttsche invariants $BG^X_0(\beta)(q)$ admit an extension called Göttsche-Schroeter invariants \cite{gottsche_refined_2019}, that we denote by $BG^X_0(\beta,s)(q)$. In this notation, $s$ is a parameter that takes into account how many pairs of complex conjugated points we fix when computing Welschinger invariants. Recently, Shustin and Sinichkin \cite{shustin-2024-refined} and the second author \cite{mevel-2024-combinatorial} independently showed that one can define a similar quantity $BG^X_g(\beta,s)(q)$ for any genus $g$. In this appendix we present how to adapt the proofs of the present paper with non-zero $s$. Note that the genus $0$ case was already handled in \cite{mevel2023universal}. 

Recall from \cite{mevel-2024-combinatorial} that the floor diagram recipe to compute $BG^X_g(\beta,s)(q)$ requires to choose a \textit{pairing} $S$ of order $s$. In this appendix we will take $S=\{\{1,2\},\dots,\{2s-1,2s\}\}$, and we say a marked floor diagram $(\D,\mfk)$ is $s$-compatible if for any $\alpha \in S$, the set $\mfk^{-1}(\alpha)$ consists in either an edge and a floor, or two edges both entering or leaving the same floor.

We only deal with the case of Hirzebruch surfaces. The case of $h$-transverse, horizontal and non-singular toric surfaces is obtained as in the main body of this paper, by encoding the divergence of the floors of the diagrams via sloping pairs. All details can be found in \cite{mevel-2024-proprietes}.

\subsection{The genus $0$ case}

To take into account the parameter $s$ we define $s$-compatible words.

\begin{defi}
We say that a word $\sfW=\sfw_1\sfw_2\cdots$ is \emph{$s$-compatible} if for any $1\leq j\leq s$ we have $\sfw_{2j-1}=\sfw_{2j}$.
\end{defi}

The bijective correspondence between marked floor diagrams and words is as follows.

\begin{prop} \label{prop-W(Delta,s)}
Let $\D$ be a $s$-compatible marked floor diagram of Newton polygon    
$\Delta^\delta_{aE+bF}$, with $\codeg(\D) \leq i$ and 
$b \geq i+2s$. Then the word $W(\D)$ satisfies the following.
    \begin{itemize}
        \item[(i)-(iii)] from Proposition \ref{prop-corresp-word-diag-genus-0-hirzebruch} are still satisfied.
        \item[(iv)] The word is $s$-compatible.
    \end{itemize}
We denote by $\WWW(\Delta^\delta_{aE+bF},s)$ the set of words satisfying the above conditions. Given a word $\sfW\in\WWW(\Delta^\delta_{aE+bF},s)$, there is a unique way to recover a $s$-compatible marked floor diagram of Newton polygon $\Delta^\delta_{aE+bF}$.
\end{prop}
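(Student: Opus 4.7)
The plan is to extend the proof of Proposition \ref{prop-corresp-word-diag-genus-0-hirzebruch} to account for the extra $s$-compatibility constraint. Points (i)--(iii) will follow verbatim from that earlier proposition, since they describe only the bulk structure of the word and are insensitive to $s$-compatibility. The remaining work splits into two parts: first, deriving (iv) from the $s$-compatibility of $(\D,\mfk)$, and, conversely, upgrading the reconstruction of Proposition \ref{prop-corresp-word-diag-genus-0-hirzebruch} to produce an $s$-compatible marked diagram from an $s$-compatible word.

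The key observation I will use in both directions is that under the hypotheses $\codeg(\D)\leq i$ and $b\geq i+2s$, the first $2s$ markings all land on bottom ends attached to the first floor $v_1$. Indeed, any bottom end attached to a floor $v_k$ with $k\geq 2$ contributes at least $1$ to the codegree, so at most $i$ of the $b$ bottom ends avoid $v_1$, leaving $n_1\geq b-i\geq 2s$ bottom ends at $v_1$. Since these are all $\prec v_1$ in the partial order, the increasing marking forces $\mfk(v_1)\geq n_1+1>2s$, so no floor carries a marking in $\{1,\dots,2s\}$. For the forward direction, this means that for each pair $\{2j-1,2j\}\in S$ with $j\leq s$, the preimage $\mfk^{-1}(\{2j-1,2j\})$ consists of two edges, so case (b) of the $s$-compatibility necessarily applies: the two edges enter or leave a common floor. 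Reading off the corresponding letters, two bottom ends into $v_k$ both produce $\sfb_{k-1}$, and similarly for the other edge types; in every situation $\sfw_{2j-1}=\sfw_{2j}$, which is (iv).

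For the converse direction, given $\sfW\in\WWW(\Delta^\delta_{aE+bF},s)$, I will first invoke Proposition \ref{prop-corresp-word-diag-genus-0-hirzebruch} to reconstruct a marked floor diagram $(\D,\mfk)$. A dual count of $\sfb_0$-letters shows that $\sfB_0$ must contain at least $b-i\geq 2s$ of them (any $\sfb_k$ with $k\geq 1$ contributes to the codegree, and $\sfb_0$ can only appear in $\sfB_0$), so the first $2s$ positions of $\sfW$ lie inside $\sfB_0$ and are of the form $\sfb_*$. Condition (iv) then forces each pair $(\sfw_{2j-1},\sfw_{2j})$ to be $(\sfb_{k_j},\sfb_{k_j})$, which corresponds in the reconstructed diagram to two bottom ends both entering $v_{k_j+1}$---precisely case (b) of the $s$-compatibility of marked diagrams. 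The main subtlety I expect is excluding case (a) (an edge paired with a floor) for $j\leq s$, which would yield unequal letters $\sff$ and non-$\sff$ in positions $2j-1,2j$ and break (iv); the numerical bound $b\geq i+2s$ is calibrated exactly to push all floor markings beyond position $2s$ and rule this case out.
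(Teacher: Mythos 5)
Your proof is correct and takes essentially the same route as the paper: show the first $2s$ markings cannot lie on floors (hence lie on bottom ends), invoke $s$-compatibility of $(\D,\mfk)$ to conclude each such pair consists of two bottom ends attached to the same floor, and read off equal letters; you also supply more detail on the converse than the paper does. One small overstatement: your announced ``key observation'' that the first $2s$ markings all lie on bottom ends attached to $v_1$ is not generally true (they can sit at $v_k$ for $k\ge 2$, provided both members of a pair agree), but your actual derivation only uses the correct weaker fact that no floor, top end, or bounded edge carries a marking in $\{1,\dots,2s\}$, and your case analysis ``two bottom ends into $v_k$'' already accommodates general $k$, so the argument stands.
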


\begin{proof}
For the converse construction and proofs of items $(i)-(iii)$, see Proposition \ref{prop-corresp-word-diag-genus-0-hirzebruch}. 
For item $(iv)$, the first $2s$ marked points lie on ends because $b \geq i+2s$ and $\codeg(\D) \leq i$. As the diagram is $s$-compatible, for any $j\leq s$ the marked points $2j-1$ and $2j$ lie on ends adjacent to the same floor. Thus, the word is also $s$-compatible. 
\end{proof}

Recall we define the codegree of a word such that $\codeg(\D) = \codeg(W(\D))$.

\begin{lem} \label{lem-description-words-codeg-i}
Assume $i\geq 1$, $a>2i$, and $b \geq i+2s$.
The words in $\WWW(\Delta^\delta_{aE+bF},s)$ of codegree at most $i$ are of the form described by Lemma \ref{lem-BT-words-if-bounded-codegree}. Moreover, the word $\sfB_0$ is $s$-compatible.
\end{lem}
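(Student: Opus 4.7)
The first assertion will be immediate. By Proposition \ref{prop-W(Delta,s)}, any word in $\WWW(\Delta^\delta_{aE+bF},s)$ satisfies conditions (i)-(iii) of Proposition \ref{prop-corresp-word-diag-genus-0-hirzebruch}, hence lies in $\WWW_{aE+bF}$. Lemma \ref{lem-BT-words-if-bounded-codegree} then applies verbatim and gives the claimed shape.

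For the second assertion, the key observation is that the $s$-compatibility of the full word $\sfW$ transfers to $\sfB_0$ provided the first $2s$ letters of $\sfW$ all belong to $\sfB_0$. I would establish this by a simple counting argument on bottom ends. In the decomposition from Lemma \ref{lem-BT-words-if-bounded-codegree}, every letter $\sfb_*$ that does \emph{not} lie in $\sfB_0$ sits inside some $\sfB_j^{(k)}$ with $j\geq 1$, hence has index at least $1$ and contributes at least $1$ to $\codeg(\sfW)$. Since $\codeg(\sfW)\leq i$, at most $i$ bottom-end letters can lie outside $\sfB_0$. As the total number of bottom-end letters is $b$, we conclude
\[ \ell(\sfB_0) \;\geq\; b - i \;\geq\; 2s. \]

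Consequently, for every $1\leq j\leq s$, both letters $\sfw_{2j-1}$ and $\sfw_{2j}$ lie inside $\sfB_0$. The $s$-compatibility of $\sfW$ gives $\sfw_{2j-1}=\sfw_{2j}$, and reading these equalities inside $\sfB_0$ yields precisely the $s$-compatibility of $\sfB_0$. There is no real obstacle here; the only thing to be slightly careful about is the bookkeeping on the location of the first $2s$ letters, which is exactly what the bound $b\geq i+2s$ is designed to guarantee.
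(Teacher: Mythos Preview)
Your proof is correct and follows essentially the same approach as the paper. Both arguments establish $\ell(\sfB_0)\geq b-i\geq 2s$ by noting that each bottom-end letter outside $\sfB_0$ (equivalently, each bottom end not attached to the bottom floor) contributes at least $1$ to the codegree; you phrase this directly at the level of words via the decomposition of Lemma~\ref{lem-BT-words-if-bounded-codegree}, while the paper routes the same count through the diagram picture.
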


\begin{proof}
Let $\sfW$ be such a word. We only need to show that $\sfB_0$ is $s$-compatible. The hypothesis $b \geq i+2s$ together with $\codeg(\sfW)\leq i$ implies that the diagram corresponding to $\sfW$ has at least $2s$ infinite edges attached to the bottom floor. Hence the first $2s$ letters $\sfb_*$ are before the first letter $\sff$ in the word. Since a word in $\WWW(\Delta^\delta_{aE+bF},s)$ is $s$-compatible, then so is $\sfB_0$.
\end{proof}

As in the main body of this paper, the word is hence described by a core $(\sff\sfe)^{a-1}\sff$, a  $B$-word and a $T$-word. We still call $B$-words and $T$-words ``end-words'', and denote by $\S$ the set of sentences, \ie 
\[ \S=\{ (\sfS_0,\sfS_1^{(1)},\sfS_1^{(2)},\dots,\sfS_i^{(1)},\sfS_i^{(2)})\ |\ i\geq 0,\ \sfS_j^{(k)} \text{ word in }\{\sfs_*\}_{*\geq j} \}, \]
endowed with functions $\codeg,\ \ell_0,\ \ell_j^{(k)},\ \ell : \ \S \to \NN $.
For $n\geq0$ we denote by $\S(n)$ the set of sentences with total length $n$, and $\S^s(n)$ the subset of sentences with length $n$ such that $\sfS_0$ a is $s$-compatible word.

Lemma \ref{lem-description-words-codeg-i}
asserts that choosing a word $\sfW$ in $\WWW(\Delta^\delta_{aE+bF},s)$ having codegree at most $i$ and with $\delta,a,b$ large enough amounts to choose :
\begin{itemize}
    \item an element $\bfk \in \S^s(b)$ that encodes the $B$-words,
    \item an element $\tfk \in \S(b+\delta a)$ that encodes the $T$-words,
\end{itemize}
such that $\codeg(\sfW) = \codeg(\tfk) + \codeg(\bfk) \leq i$.
The computation of a generating series over $\WWW(\Delta^\delta_{aE+bF},s)$ hence splits into the computations of some generating series over $\S^s(b)$ and $\S(b+\delta a)$.
The determination of the generating series over the $s$-compatible words is an adaptation of Lemma \ref{lem-generating-series-T-words}, see Lemma \ref{app-lem-generating-series-T-words} below.

\begin{defi} \label{def-mult-sentence}
We define the \emph{$s$-multiplicity} of a sentence $\sfk \in \S^s(n)$ to be 
\[ \mu_{\S^s(n)}(\sfk)=(1-x)^n x^{\codeg(\sfk)} \left( \frac{1+x}{1-x}\right)^s . \]

\end{defi}

\begin{lem}\label{app-lem-generating-series-T-words}
Let $n>i\geqslant 1$. The generating series of $s$-compatible sentences of length $n$ counted with the $s$-multiplicity is
\[ \sum_{\sfk\in\S^s(n)}  \mu_{\S^s(n)}(\sfk) = p(x)^2 \mod x^{i+1}.\]
\end{lem}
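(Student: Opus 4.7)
The plan is to adapt the proof of Lemma \ref{lem-generating-series-T-words}: the factor $\left(\frac{1+x}{1-x}\right)^s$ built into the $s$-multiplicity is designed precisely to absorb the constraint imposed by $s$-compatibility on $\sfS_0$. As in the original lemma, working modulo $x^{i+1}$ I restrict to sentences of codegree at most $i$ (the others contribute $0$). I then fix a tuple of lengths $(l_0, l_j^{(k)})$ with $l_0+\sum_{j,k}l_j^{(k)}=n$ and compute the sum of multiplicities of sentences with these lengths, before summing over length tuples.

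For the inner computation, the words $\sfS_j^{(k)}$ with $j\geq 1$ are unconstrained and contribute $\left(\frac{x^j}{1-x}\right)^{l_j^{(k)}}$ just as in Lemma \ref{lem-generating-series-T-words}. For the $s$-compatible word $\sfS_0$, the first $2s$ letters form $s$ pairs of identical letters $\sfs_k$, each pair contributing $\sum_{k\geq 0}x^{2k}=\frac{1}{1-x^2}$, while the remaining $l_0-2s$ letters each contribute $\frac{1}{1-x}$. Multiplying by the $s$-multiplicity and using the identity
\[ \left(\frac{1+x}{1-x}\right)^s \left(\frac{1}{1-x^2}\right)^s = \frac{1}{(1-x)^{2s}}, \]
which follows immediately from $1-x^2=(1-x)(1+x)$, the $s$-dependent factors cancel against the contribution of the paired letters and the contribution of fixed lengths reduces to $\prod_{j,k} x^{jl_j^{(k)}}$, exactly as in the non-refined case.

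Summing over length tuples and letting each $l_j^{(k)}$ (with $j\geq 1$) range independently over $\NN$ modulo $x^{i+1}$, I recover $p(x)^2$ as in Lemma \ref{lem-generating-series-T-words}. The only subtle point is noticing the algebraic identity above, which is precisely the reason for the correction $\left(\frac{1+x}{1-x}\right)^s$ in Definition \ref{def-mult-sentence}; once this is in hand, the combinatorial bookkeeping mirrors that of the original lemma verbatim. The constraint $l_0\geq 2s$ needed for $s$-compatibility to be non-vacuous is automatic under the hypothesis $n\geq 2s+i$ inherited from Proposition \ref{prop-W(Delta,s)}, since restricting to codegree at most $i$ forces $\sum_{j\geq 1,k} l_j^{(k)}\leq i$ and hence $l_0\geq n-i\geq 2s$.
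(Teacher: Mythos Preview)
Your proof is correct and follows essentially the same approach as the paper: fix the length tuple, observe that the $s$ paired positions in $\sfS_0$ yield a factor $\left(\frac{1}{1-x^2}\right)^s$ which cancels against the built-in $\left(\frac{1+x}{1-x}\right)^s$, and then reduce to the computation of Lemma \ref{lem-generating-series-T-words}. Your final paragraph verifying $l_0\geq 2s$ for the relevant length tuples is a detail the paper's proof leaves implicit; note, however, that this requires $n\geq i+2s$, which is not part of the lemma's stated hypotheses but holds in every application.
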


\begin{proof}
The proof is similar to the one of lemma \ref{lem-generating-series-T-words}. We indicate what are the minor changes.

First, the sum of the multiplicities of sentences $\sfk$ such that $\ell_0(\sfk) = l_0$ and $\ell_j^{(k)}(\sfk) = l_j^{(k)}$ is
\[ (1-x)^n  \left(\frac{1+x}{1-x}\right)^s \left( \dsum_{\substack{\ell(\sfS_0)=l_0 \\ \sfS_0\ s\text{-compatible}}} x^{\codeg(\sfS_0)} \right) \times \dprod_{j,k} \left( \dsum_{\ell(\sfS_j^{(k)}) = l_j^{(k)}} x^{\codeg(\sfS_j^{(k)})} \right). \]

Second, the computation for letters in $\sfS_j^{(k)}$ is the same, but the one for letters in $\sfS_0$ changes a bit. Indeed, letters in $\sfS_0$ can take values in $\{\sfs_*\}_{*\geq 0}$, but they are not chosen independently since for any of the first $s$ pairs of letters, the letters of the pair have to take the same value. Thus, we get
\begin{align*}
    \dsum_{\substack{\ell(\sfS_0)=l_0 \\ \sfS_0\ s\text{-compatible}}} x^{\codeg(\sfS_0)} = & \left(\dsum_{k\geq0} x^{2k} \right)^{s}\left(\dsum_{k\geq0} x^k \right)^{l_0-2s} \\
    = & \left(\dfrac{1}{1-x^2}\right)^{s}\left(\dfrac{1}{1-x}\right)^{l_0-2s} \\
    = & \left(\frac{1-x}{1+x}\right)^s\left(\frac{1}{1-x}\right)^{l_0}.
\end{align*}

Hence, the term $\left(\frac{1+x}{1-x}\right)^s$ in the sum of the multiplicities cancels with the $\left(\frac{1-x}{1+x}\right)^s$ appearing in the words $\sfS_0$. The rest of the proof is as in Lemma \ref{lem-generating-series-T-words}.
\end{proof}

\begin{rem}
    Note that this does not depend on $s$. As a consequence, the asymptotic refined invariant in Theorem \ref{app-theo-AR-genus0-general} below is independent of $s$.
\end{rem}

In the main body of the present paper, the results are stated in terms of asymptotic refined invariants $AR_g^X$. In this appendix we similarly consider the asymptotic refined invariant $AR_{g,s}^X$,  
which amounts to count the floor diagrams with the multiplicity
\[ x^{\codeg(\D)} (1-x)^{2b+\delta a} \left(\dfrac{1+x}{1-x}\right)^s \dprod_{e\in E^0(\D)} (1-x^{w(e)})^2 ,\]
which turns to be equal to $\mu_{\S^s(b)}(\bfk) \mu_{\S^0(b+\delta a)}(\tfk) \mod x^i$, because one can assume that the weights of the bounded edges are large, see Lemma \ref{lem-multiplicity-asymptotic-word-genus-0-hirzebruch}.

The following theorem is proven in Theorem \ref{theo-AR-genus0-hirzebruch} for $s=0$.

\begin{theo}\label{app-theo-AR-genus0-Hirz}
    The genus $0$ asymptotic refined invariant of the Hirzebruch surface $\FF_\delta$ is 
    \[ AR^{\FF_\delta}_{0,s}= p(x)^4. \]
\end{theo}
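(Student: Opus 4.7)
The plan is to mimic the proof of Theorem \ref{theo-AR-genus0-hirzebruch}, replacing the enumeration over $\S(b) \times \S(b+\delta a)$ by an enumeration over $\S^s(b) \times \S(b+\delta a)$, and invoking Lemma \ref{app-lem-generating-series-T-words} in place of Lemma \ref{lem-generating-series-T-words} on the bottom-end factor.

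First I would fix $i \geq 1$ and compute $AR^{\FF_\delta}_{0,s} \bmod x^{i+1}$ by summing the $s$-refined multiplicities of all $s$-compatible marked floor diagrams of class $aE+bF$ and codegree at most $i$, for $a,b$ large (say $a > 2i$ and $b > i+2s$). By Proposition \ref{prop-W(Delta,s)}, such diagrams are in bijection with the words in $\WWW(\Delta^\delta_{aE+bF},s)$ of codegree at most $i$, and by Lemma \ref{lem-description-words-codeg-i} such a word decomposes uniquely as the datum of an $s$-compatible $B$-sentence $\bfk \in \S^s(b)$ and an (unconstrained) $T$-sentence $\tfk \in \S(b+\delta a)$, with $\codeg(\sfW) = \codeg(\bfk) + \codeg(\tfk)$.

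Next I would factor the multiplicity. Exactly as in Lemma \ref{lem-multiplicity-asymptotic-word-genus-0-hirzebruch}, every bounded edge between consecutive floors has weight greater than $i$ (by Lemma \ref{lem-poids-grands}), so $(1-x^{w(e)})^2 \equiv 1 \bmod x^{i+1}$ and the bounded-edge factors disappear modulo $x^{i+1}$. The remaining multiplicity splits cleanly as
\[
x^{\codeg(\D)}(1-x)^{2b+\delta a}\left(\tfrac{1+x}{1-x}\right)^s \equiv \mu_{\S^s(b)}(\bfk)\cdot \mu_{\S(b+\delta a)}(\tfk) \bmod x^{i+1},
\]
where on the right we use Definition \ref{def-mult-sentence} for $\mu_{\S^s(b)}$ and the original multiplicity for the top factor. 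Summing over $\S^s(b) \times \S(b+\delta a)$ (possibly including terms that vanish modulo $x^{i+1}$) makes the total generating series factor as a product of two independent sums.

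Finally, Lemma \ref{app-lem-generating-series-T-words} gives $\sum_{\bfk\in\S^s(b)} \mu_{\S^s(b)}(\bfk) \equiv p(x)^2 \bmod x^{i+1}$, and Lemma \ref{lem-generating-series-T-words} gives the same value $p(x)^2$ for the sum over $\S(b+\delta a)$. Multiplying yields $p(x)^4 \bmod x^{i+1}$. Since $i$ was arbitrary, this identity holds in $\ZZ[\![x]\!]$, proving $AR^{\FF_\delta}_{0,s} = p(x)^4$. The only mildly delicate point is the multiplicative correction $\left(\frac{1+x}{1-x}\right)^s$ in Definition \ref{def-mult-sentence}: it is precisely engineered so that, in the generating-series computation of Lemma \ref{app-lem-generating-series-T-words}, it cancels the factor $\left(\frac{1-x}{1+x}\right)^s$ produced by forcing the first $2s$ letters of $\sfB_0$ to come in identical pairs, which is why the final answer is independent of $s$.
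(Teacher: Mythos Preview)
Your proposal is correct and follows essentially the same route as the paper's own proof: encode $s$-compatible marked diagrams by words via Proposition \ref{prop-W(Delta,s)} and Lemma \ref{lem-description-words-codeg-i}, factor the multiplicity as $\mu_{\S^s(b)}(\bfk)\cdot\mu_{\S(b+\delta a)}(\tfk)$ modulo $x^{i+1}$, and then apply Lemma \ref{app-lem-generating-series-T-words} on the bottom factor and Lemma \ref{lem-generating-series-T-words} on the top factor to get $p(x)^2\cdot p(x)^2$. Your explicit remark about the cancellation of $\left(\frac{1+x}{1-x}\right)^s$ against the pairing constraint is exactly the mechanism behind Lemma \ref{app-lem-generating-series-T-words}.
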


\begin{proof}
The proof is as in Theorem \ref{theo-AR-genus0-hirzebruch}, except that 
\begin{itemize}
	\item the multiplicity is $(1-x)^{2b+\delta a} x^{\codeg(\sfW)} \left( \dfrac{1+x}{1-x} \right)^s  \mod x^{i+1}$,
	
	\item one has to replace $\S(b)$ by $\S^s(b)$,
	
	\item when factorizing the generating series, the first term is 
	\[ (1-x)^b \left(\frac{1+x}{1-x}\right)^s \sum_{\bfk\in\S^s(b) } x^{\codeg(\bfk)}, \]
	
	\item one has to use Lemma \ref{app-lem-generating-series-T-words} instead of Lemma \ref{lem-generating-series-T-words} to conclude.
\end{itemize}
\end{proof}

For $h$-transverse, non-singular and horizontal toric surface, Theorem \ref{theo-AR-genus0-general} adapts similarly to the following.

\begin{theo} \label{app-theo-AR-genus0-general}
Let $X$ be toric surface associated to a $h$-transverse, horizontal and non-singular polygon. The genus $0$ asymptotic refined invariant is given by
\[ AR^X_{0,s} = p(x)^\chi . \]
\end{theo}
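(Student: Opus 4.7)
The plan is to combine the word-based proof of Theorem \ref{theo-AR-genus0-general} with the $s$-compatibility machinery developed earlier in this appendix for the Hirzebruch case (Theorem \ref{app-theo-AR-genus0-Hirz}). Since we already know how to handle the sloping pairs via elements of $\P$ and how to handle the pairing via $s$-compatible $\sfB_0$-words, the proof should essentially be a merging of the two arguments, with the key observation being that Lemma \ref{app-lem-generating-series-T-words} delivers the same total weight $p(x)^2$ as Lemma \ref{lem-generating-series-T-words}, so no $s$-dependence will survive.

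First I would extend Proposition \ref{prop-corresp-word-diag-genus-0-general} to the $s$-compatible setting: an $s$-compatible marked floor diagram $\D$ of class $\beta$ with $\codeg(\D)\leq i$ and $\beta\cdot D$ large for every toric divisor $D$ gives a word $W(\D)$ satisfying conditions (i)--(iv) of Proposition \ref{prop-corresp-word-diag-genus-0-general}, together with the $s$-compatibility condition $\sfw_{2j-1}=\sfw_{2j}$ for $1\leq j\leq s$. Conversely such a word reconstructs $\D$. As in the proof of Proposition \ref{prop-W(Delta,s)}, the hypothesis $b^\bot\geq i+2s$ forces the first $2s$ marked points to lie on bottom ends, so the $s$-compatibility of $\D$ translates to $s$-compatibility of the word, and in particular of the initial $B$-word $\sfB_0$ (while $\sfB_j^{(k)}$, $\sfT_0$, $\sfT_j^{(k)}$ are unconstrained by the pairing). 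Lemma \ref{lem-sloping-pairs-if-bounded-codegree} still applies in the $s$-compatible case, since the pairing does not interact with the sloping-pair indices.

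Next I would replicate the factorization from Theorem \ref{theo-AR-genus0-general}: choosing such a word of codegree at most $i$ is equivalent to choosing
\[ (\bfk,\tfk,(\pfk_c)_c) \in \S^s(b^\bot)\times \S(b^\top)\times \P^{\chi-4} \]
with $\codeg(\bfk)+\codeg(\tfk)+\sum_c \codeg(\pfk_c)\leq i$, where $\bfk$ encodes the (now $s$-compatible) $B$-words, $\tfk$ encodes the $T$-words, and the elements of $\P$ encode the sloping pairs at the $\chi-4$ corners non-adjacent to a horizontal side via Lemma \ref{lem-correspondence-LR-inv}. By the $s$-compatible analogue of Lemma \ref{lem-multiplicity-asymptotic-word-genus-0-hirzebruch}, the multiplicity of the corresponding marked diagram equals, modulo $x^{i+1}$,
\[ \mu_{\S^s(b^\bot)}(\bfk)\cdot\mu_{\S(b^\top)}(\tfk)\cdot \prod_c x^{\codeg(\pfk_c)}, \]
since the bounded elevators still have weight $>i$ by Lemma \ref{lem-sloping-pairs-if-bounded-codegree} and the extra factor $((1+x)/(1-x))^s$ coming from the Göttsche-Schroeter setting is absorbed into $\mu_{\S^s(b^\bot)}$ as in Definition \ref{def-mult-sentence}.

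Summing over the product space (extended to include terms that contribute $0$ modulo $x^{i+1}$), the generating series factors and we get
\[ AR^X_{0,s} \equiv \Bigl(\sum_{\bfk\in\S^s(b^\bot)}\mu_{\S^s(b^\bot)}(\bfk)\Bigr)\Bigl(\sum_{\tfk\in\S(b^\top)}\mu_{\S(b^\top)}(\tfk)\Bigr)\Bigl(\sum_{\pfk\in\P}x^{\codeg(\pfk)}\Bigr)^{\chi-4} \mod x^{i+1}. \]
Applying Lemma \ref{app-lem-generating-series-T-words} to the first factor, Lemma \ref{lem-generating-series-T-words} to the second, and Lemma \ref{lem-generating-series-corners} to the last gives $p(x)^2\cdot p(x)^2\cdot p(x)^{\chi-4} = p(x)^\chi$, which holds for all $i\geq 1$ and yields the result. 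The only real subtlety, already handled by Lemma \ref{app-lem-generating-series-T-words}, is the cancellation of the factor $((1+x)/(1-x))^s$ against the contribution of the $s$ paired letters in $\sfB_0$; everything else is a routine transcription of the $s=0$ argument.
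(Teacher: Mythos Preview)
Your proposal is correct and follows exactly the approach the paper intends: the paper's own ``proof'' of this statement is the single sentence that Theorem \ref{theo-AR-genus0-general} adapts similarly to how Theorem \ref{app-theo-AR-genus0-Hirz} adapted Theorem \ref{theo-AR-genus0-hirzebruch}, and you have carried out precisely that adaptation, factoring over $\S^s(b^\bot)\times\S(b^\top)\times\P^{\chi-4}$ and invoking Lemmas \ref{app-lem-generating-series-T-words}, \ref{lem-generating-series-T-words}, and \ref{lem-generating-series-corners}.
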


\subsection{The genus $1$ case}

In the main body of this paper we introduced \emph{nerved diagrams} to build marked floor diagrams of genus $1$ from one of genus $0$. 
We also introduced measures $\mu_{\S(n)}$ and $\mu_\L$, such that if $\pi$ is the map $\pi = (\ell_j^{(k)})_{j,k} : \S(n) \to \L$ then $\mu_{\S(n)}( \pi^{-1}(\bfl)) = \mu_\L(\bfl)$. Here, we replace $\mu_{\S(n)}$ by $\mu_{\S^s(n)}$, see Definition \ref{def-mult-sentence}. The idea was then to introduce the normalized measures $\nu_{\S(n)}$ and $\nu_{\S^s(n)}$ and to see sums with multiplicities as integrals along these measures. We thus need to explain how the integrals computations change when $s$ is non-zero. The first difference appears when looking at the leak function.

\begin{defi} \label{def-leak-function}
We define on $\S^s(n)$ the \emph{leak function} $\phi_m^s[n](\sfk)$ equal to the number of letters of $\sfk$ with an index larger than $m$. To get a function of $\bfl\in\L$, we average over the set $\pi^{-1}(\bfl) \cap \S^s(n)$ of $s$-compatible sentences with lengths $\bfl$ :
\[ \varphi_m^s[n](\bfl) := \dfrac{1}{1-x^m}\frac{1}{\mu_{\S^s(n)}(\pi^{-1}(\bfl))}\int_{\pi^{-1}(\bfl) \cap \S^s(n)}\phi_m^s[n]\dd\mu_{\S^s(n)}. \]
\end{defi}

\begin{lem}\label{app-lem-expr-leak-function-in-l-monomials}
     We have the following expression on $\L$ :
     \[ \varphi^s_m[n](\bfl) = n \ang{m}-2sx^m+\psi_m(\bfl), \ \text{ where } \ \psi_m = \ang{m}\sum_{j=1}^m \frac{\ell_j}{\ang{j}}+\sum_{j=m+1}^{+\infty} \ell_j. \]
\end{lem}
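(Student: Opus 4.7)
The proof follows the same Fubini-type strategy as in Lemma \ref{lem-expr-leak-function-in-l-monomials}, the only genuine novelty being how the $s$-compatibility constraint on $\sfS_0$ produces the correction term $-2sx^m$.

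First I would decompose the leak function positionally:
\[ \phi^s_m[n](\sfk) = \sum_{\alpha=1}^{l_0} \mathds{1}(\mathrm{ind}(\sfs_{0,\alpha}) \geq m) + \sum_{j,k} \sum_{\alpha=1}^{l_j^{(k)}} \mathds{1}(\mathrm{ind}(\sfs_{j,\alpha}^{(k)}) \geq m), \]
and split the first sum further into positions $\alpha \leq 2s$ (which come in the forced pairs $(1,2),(3,4),\dots$) and positions $2s < \alpha \leq l_0$ (unconstrained). By linearity it suffices to compute, for each position, the integral of its indicator against $\mu_{\S^s(n)}$.

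Next I would do the three position-type computations in parallel, exactly as in Lemma \ref{lem-expr-leak-function-in-l-monomials} but accounting for the pairing. For each letter we evaluate the geometric sum $\sum_{p \geq j'} x^p = x^{j'}/(1-x)$ at every ``free'' position, and at the distinguished position (the one carrying the indicator) we replace this by $x^{(m-j)_+} x^j/(1-x)$. For a paired position in $\sfS_0$, the two letters must share an index, so both positions of the pair get collapsed into a single geometric sum $\sum_{p \geq 0} x^{2p} = 1/(1-x^2)$ in the normalization and into $\sum_{p \geq m} x^{2p} = x^{2m}/(1-x^2)$ when the indicator is imposed on the pair. The key algebraic identity
\[ \left(\frac{1+x}{1-x}\right)^s \cdot \frac{1}{(1-x^2)^s} = \frac{1}{(1-x)^{2s}} \]
shows, as in Lemma \ref{app-lem-generating-series-T-words}, that the multiplicity prefactor $\bigl((1+x)/(1-x)\bigr)^s$ exactly cancels the deficit introduced by the pairing; in particular $\mu_{\S^s(n)}(\pi^{-1}(\bfl)) = \mu_\L(\bfl)$. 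Moreover one pair contributes $2 \cdot \mathds{1}$ to $\phi^s_m$ and integrates to $2 x^{2m} \mu_\L(\bfl)$, while a free letter contributes $x^m$ (in $\sfS_0$) or $x^{(m-j)_+}$ (in $\sfS_j^{(k)}$).

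Summing these three contributions and dividing by $\mu_\L(\bfl)(1-x^m)$ gives
\[ \varphi^s_m[n](\bfl) = \frac{1}{1-x^m}\left[ 2s\,x^{2m} + (l_0-2s)x^m + \sum_{j,k} l_j^{(k)} x^{(m-j)_+}\right]. \]
Using $l_0 = n - \sum_{j \geq 1}\ell_j$ and rewriting $2s x^{2m} + (l_0-2s)x^m = n x^m - 2s x^m(1-x^m) - \sum_{j\geq1} \ell_j x^m$, the $-2s x^m(1-x^m)$ piece cancels the $1/(1-x^m)$ to yield the $-2sx^m$ term, while the remaining algebra reproduces $n\ang{m}$ and $\psi_m(\bfl)$ exactly as in the proof of Lemma \ref{lem-expr-leak-function-in-l-monomials}, via the identity $(x^{m-j}-x^m)/(1-x^m) = \ang{m}/\ang{j}$. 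The main obstacle is just bookkeeping: making sure the pair contributes $2$ to the leak (not $1$) and verifying the compensation identity above so that $\mu_\L(\bfl)$ appears with the correct normalization.
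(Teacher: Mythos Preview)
Your proposal is correct and follows essentially the same approach as the paper: you decompose $\phi^s_m$ into per-letter indicators, compute each integral by a Fubini-type product of geometric series (distinguishing paired positions in $\sfS_0$, free positions in $\sfS_0$, and positions in $\sfS_j^{(k)}$), obtain $2sx^{2m}+(l_0-2s)x^m+\sum_{j,k}l_j^{(k)}x^{(m-j)_+}$, and then simplify exactly as in Lemma~\ref{lem-expr-leak-function-in-l-monomials}. The paper does the same thing, merely phrasing the cancellation of the $\bigl(\tfrac{1+x}{1-x}\bigr)^s$ factor by a reference to Lemma~\ref{app-lem-generating-series-T-words} rather than writing out your identity explicitly.
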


\begin{proof}
We proceed as in the proof of Lemma \ref{lem-expr-leak-function-in-l-monomials}.
In terms of the letters, the leak function $\phi_m^s[n]$ is
        \[ \phi^s_m[n](\sfk)= \sum_{\sfs\in\sfS_0}\mathds{1}(p\geq m \text{ with }\sfs=\sfs_p) + \sum_{j,k}\sum_{\sfs\in\sfS_j^{(k)}}\mathds{1}(p\geq m \text{ with }\sfs=\sfs_p) \]
and we need to compute 
    \[ I_\sfs = (1-x)^n \left( \dfrac{1+x}{1-x} \right)^s \sum_{\sfk \in \pi^{-1}(\bfl)\cap\S^s(n)} \mathds{1}(p\geq m \text{ with }\sfs=\sfs_p) x^{\codeg(\sfk)}  \]
    for each term $\mathds{1}(p\geq m \text{ with }\sfs=\sfs_p)$  corresponding to a position of the letter $\sfs$ in one of the words $\sfS_0$ or $\sfS_j^{(k)}$.
    
    If $\bfl = (l_0,l_j^{(k)})$ then the sum splits into the product of sums
    \[ \dsum_{\substack{\ell(\sfS_0)=l_0 \\ \sfS_0\ s\text{-compatible}}}  \times \dprod_{j,k}  \dsum_{\ell(\sfS_j^{(k)}) = l_j^{(k)}} \]
    but the values of the letters are constrained by the condition $(p\geq m \text{ with } \sfs=\sfs_p)$.

    Assume first that the position corresponding to $\sfs$ is in $\sfS_j^{(k)}$. Then the computation is as in Lemma \ref{lem-expr-leak-function-in-l-monomials} except that the sum over $\sfS_0$ leads a factor $\left(\frac{1-x}{1+x}\right)^s$ as in Lemma \ref{app-lem-generating-series-T-words}, which cancels with the one in $I_\sfs$. In the end, 
    \[ I_\sfs = x^{(m-j)_+}\mu_\L(\bfl) .\]

    Assume now that the position of $\sfs$ is in $\sfS_0$. 
    If $\sfs$ is not in the first $2s$ letters then the computation is as above and 
    \[I_\sfs = x^m \mu_\L(\bfl) . \]
    
    If the position corresponding to $\sfs$ is among the first $2s$ letters, for $\sfS_0$ we get
    \begin{align*}
         & \left(\dsum_{k\geq0} x^{2k} \right)^{s-1} \left( \dsum_{p\geq0} \mathds{1}(p\geq m) x^{2p} \right)  \left(\dsum_{k\geq0} x^k \right)^{l_0-2s} \\
        = & \left(\dfrac{1}{1-x^2}\right)^{s-1} \left( \dfrac{x^{2m}}{1-x^2} \right)  \left(\dfrac{1}{1-x}\right)^{l_0-2s} \\
        = & \left(\frac{1-x}{1+x}\right)^s\left(\frac{1}{1-x}\right)^{l_0} x^{2m}.  
    \end{align*}
    This gives
    \[ I_\sfs  = x^{2m} \mu_\L(\bfl). \]
    
    Finally, the integral $I_\sfs$ is
     \[ I_\sfs =  \mu_\L(\bfl) \left\{ \begin{array}{ll}
            x^{2m} & \text{ if }\sfs\text{ in the first }2s\text{ letters,} \\
            x^{(m-j)_+} & \text{ else.} \\
        \end{array} \right. \]            
    Adding the above equality over all the letter positions in the word, and using the equality $\mu_{\S^s(n)}(\pi^{-1}(\bfl)) = \mu_\L(\bfl)$, we perform the computation as is Lemma \ref{lem-expr-leak-function-in-l-monomials}, but $l_0 x^m$ is replaced by 
    \[ 2s x^{2m} + (l_0-2s)x^m = 2s x^m(x^m-1) + l_0 x^m.\] 
    The first part give the term $-2s x^m$, while the second part is managed as in Lemma \ref{lem-expr-leak-function-in-l-monomials}.

\end{proof}

We can now compute the asymptotic refined invariant for the Hirzeburch surfaces. Theorem \ref{theo-AR-genus1-hirzebruch} becomes the following.

\begin{theo}\label{app-theo-AR-genus1-hirzebruch}
    The genus $1$ asymptotic refined invariant of the Hirzebruch surface $\FF_\delta$ is
        \[ AR_{1,s}^{\FF_\delta} = p(x)^4\left(g_{\max}+2s \frac{x}{1-x}-12 E_2(x) \right), \]
    where $\gmax(\Delta^\delta_{aE+bF}) = \frac{1}{2}(a-1)(2b+\delta a-2)$.
\end{theo}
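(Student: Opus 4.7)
The plan is to mirror the proof of Theorem \ref{theo-AR-genus1-hirzebruch} with the $s$-compatibility constraint imposed on the bottom-initial word $\sfB_0$. First I would set up the nerved diagram correspondence: by Lemma \ref{lem-description-words-codeg-i} (extended to genus $1$), under the hypotheses $a > 2i$, $b \geq i + 2s + 2M$ and $\codeg(\D) \leq i$, the first $2s$ markings of an $s$-compatible marked diagram all lie on bottom ends attached to the bottom floor. In particular, every $s$-compatible genus $1$ nerved diagram arises from an $s$-compatible genus $0$ nerved diagram by inserting a single extra edge between two floors $v_m$ and $v_{m+1}$ (with $m \geq 1$), and the insertion is automatic with respect to $s$-compatibility since the added marking is placed strictly after the first $2s$ positions.

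Next I apply Lemma \ref{lem-function-to-integrate-to get-genus-1} unchanged: for fixed $m$, the sum of multiplicities of the genus $1$ nerved marked diagrams obtained by inserting an edge between $v_m$ and $v_{m+1}$ equals $\mathrm{pos}_m\bigl(\tfrac{\tomega_m-1}{2} - d_m\bigr)\mu(\tD)$. This is a purely local formula which is insensitive to $s$-compatibility. The sum over all genus $0$ diagrams splits as an integral over $\S^s(b) \times \S(b+\delta a)$ with the product measure given by $\mu_{\S^s(b)}$ and $\mu_{\S(b+\delta a)}$. By Lemma \ref{app-lem-generating-series-T-words}, the total weight of $\S^s(b)$ is still $p(x)^2$, so the normalized measures and the key function $e_m = (1-x^m)\tfrac{\ell_m+2}{2}$ behave exactly as before.

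The only effect of $s$-compatibility appears through the leak function: Lemma \ref{app-lem-expr-leak-function-in-l-monomials} gives $\varphi_m^s[b] = b\ang{m} - 2sx^m + \psi_m$, so for $m \leq i$ the correction integrand picks up an additional summand $+2sx^m$ compared to the $s=0$ case. Since $\int_\L e_m \, d\nu_\L = 1$, its integral against $\nu_\L$ is $2sx^m$. For $i < m < a-i$ and for $m \geq a-i$, the leak function is that of $\S(b+\delta a)$, which is unchanged, so the computations of the proof of Theorem \ref{theo-AR-genus1-hirzebruch} go through verbatim.

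Finally I sum over $m$. The main term $\sum_{m=1}^{a-1}(\omega_m-1) = g_{\max}$ and the top-half correction $-(\delta+6)E_2(x)$ are unchanged. The bottom-half correction becomes
\[
(\delta-6)E_2(x) + \sum_{m=1}^{\infty} 2sx^m = (\delta-6)E_2(x) + 2s\,\frac{x}{1-x},
\]
where we used that terms with $m > i$ contribute $0$ modulo $x^{i+1}$ to enlarge the summation range. Adding the three contributions gives $g_{\max} + 2s\tfrac{x}{1-x} - 12E_2(x)$, and multiplying by the total weight $p(x)^4$ yields the formula. The main obstacle is the bookkeeping around $s$-compatibility in the nerved-diagram construction; once one verifies that no marked diagram is over- or under-counted under the insertion-of-edge map (which follows from $b \geq i + 2s + 2M$), the remainder of the argument is a controlled modification of the $s=0$ computation via the single substitution in Lemma \ref{app-lem-expr-leak-function-in-l-monomials}.
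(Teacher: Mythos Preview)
Your proposal is correct and follows essentially the same three-step approach as the paper's proof: replace $\phi_m$ by $\phi_m^s$ in the first step, observe that only the $m\leq i$ integrals acquire the extra $+2sx^m$ term (via Lemma \ref{app-lem-expr-leak-function-in-l-monomials}) in the second step, and sum $\sum_{m\geq 1}2sx^m=2s\tfrac{x}{1-x}$ in the third step. Your additional remark that the inserted edge's marking lies beyond the first $2s$ positions, so that $s$-compatibility is automatically preserved, is a useful clarification the paper leaves implicit.
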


\begin{proof}
The computation of the asymptotic refined invariant goes through three steps. 

\medskip

$\circ$ \textbf{First step: expression over $\S^s(b)\times\S(b+\delta a)$.}
It is the same as in Theorem \ref{theo-AR-genus1-hirzebruch}, replacing $\phi_m$ by $\phi_m^s$.

\medskip

$\circ$ \textbf{Second step: integration over $\S^s(b)\times\S(b+\delta a)$.}
For $i<m<a-i$ or $m\geq a-i$, the computations are as in Theorem \ref{theo-AR-genus1-hirzebruch}. If $m\leq i$ the computations are identical, up to the correction term $2sx^m$.

\medskip

$\circ$ \textbf{Third step: summation over the values of $m$.}
The sum for $m\geq 1$ of the correction term $2sx^m$ gives $2s \frac{x}{1-x}$, while the rest of the calculation is the same as in Theorem \ref{theo-AR-genus1-hirzebruch}.
\end{proof}

For $h$-transverse, non-singular and horizontal toric surface, we can copy the proof of Theorem \ref{theo-AR-genus1-general} and add the term $2s \frac{x}{1-x}$ where necessary It gives the following.

\begin{theo}
Let $X$ be toric surface associated to a $h$-transverse, horizontal and non-singular polygon. Let $\gmax$ be the function $\Delta \mapsto \gmax(\Delta)$. The genus $1$ asymptotic refined invariant is given by
\[ AR^X_{1,s} = p(x)^\chi \left(g_{\max} +2s \frac{x}{1-x}-12E_2(x) \right). \]
\end{theo}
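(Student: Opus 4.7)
The plan is to mirror the proof of Theorem \ref{theo-AR-genus1-general}, substituting at each step the $s$-compatible ingredients developed earlier in the appendix. First I would fix a tuple $\Pfk = (\pfk_c)_c \in \P^{\chi-4}$, one element for each corner of $\Delta_\beta$ non-adjacent to a horizontal side, encoding the distribution of sloping pairs. Since $s$-compatibility only constrains the bottom sentence $\bfk \in \S^s(b^\bot)$, while the top sentence still lies in $\S(b^\top)$ and the sloping-pair data sits in $\P^{\chi-4}$, the three contributions decouple modulo $x^{i+1}$ exactly as in the $s=0$ case; this decoupling is the conceptual backbone of the whole argument.

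For each fixed $\Pfk$, I would repeat verbatim the three-step argument of Theorem \ref{app-theo-AR-genus1-hirzebruch}: express the function counting genus $1$ nerved diagrams obtained by inserting one new edge between floors $m$ and $m+1$ via Lemma \ref{lem-function-to-integrate-to get-genus-1}; integrate over $\S^s(b^\bot) \times \S(b^\top)$ using the modified leak-function formula $\varphi^s_m[n] = n\ang{m} - 2sx^m + \psi_m$ of Lemma \ref{app-lem-expr-leak-function-in-l-monomials}; and sum the result over $1 \leq m \leq a-1$. The $-2sx^m$ correction contributes only through the bottom side and sums to $2s\,x/(1-x)$; the non-trivial sloping pairs modify only the maximum weights to $\omega_m^\Pfk$, so that $\sum_{m=1}^{a-1} (\omega_m^\Pfk - 1) = \gmax - \sum_c \codeg(\pfk_c)$; and the top/bottom horizontal self-intersections $-\delta_\top, -\delta_\bot$ enter the coefficient of $E_2(x)$ exactly as in the $s=0$ computation. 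This yields for each fixed $\Pfk$ the contribution
\[ x^{\sum_c \codeg(\pfk_c)}\, p(x)^4 \left[ \gmax - \sum_c \codeg(\pfk_c) + 2s\,\frac{x}{1-x} - (12 - \delta_\top - \delta_\bot)\, E_2(x) \right]. \]

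To conclude, I would sum over $\Pfk \in \P^{\chi-4}$: Lemma \ref{lem-generating-series-corners} produces an overall factor $p(x)^{\chi-4}$, and Lemma \ref{lem-generating-series-corners-g1case} together with the distribution of $\sum_c \codeg(\pfk_c)$ over the $\chi-4$ corner factors yields $(\chi-4)\,E_2(x)\,p(x)^{\chi-4}$. Finally, Lemma \ref{lem-toric-surface-euler-char-self-intersection} supplies $\chi - 4 = \delta_\top + \delta_\bot$, so the total coefficient of $E_2(x)$ collapses to $-(\chi-4) - (12 - \delta_\top - \delta_\bot) = -12$, giving the stated formula.

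The main obstacle has already been isolated in Lemma \ref{app-lem-expr-leak-function-in-l-monomials}: one must carefully check that the $s$-compatibility imposed on the first $2s$ letters of $\sfB_0$ produces a clean additive correction $-2sx^m$ to the leak function, decoupled from the other word lengths, from the top sentence, and from the pearl chains $\pfk_c$ at non-horizontal corners. Once that decoupling is in hand, the integral machinery of the main body applies mutatis mutandis and the $s$-correction surfaces simply as the additive term $2s\,x/(1-x)$ in the final formula.
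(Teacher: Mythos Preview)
Your proposal is correct and follows exactly the approach the paper intends: mirror the proof of Theorem \ref{theo-AR-genus1-general}, replacing the bottom sentence space by $\S^s(b^\bot)$ and inserting the $-2sx^m$ correction from Lemma \ref{app-lem-expr-leak-function-in-l-monomials}, which sums to $2s\,x/(1-x)$. The paper's own proof is the one-line remark that one copies Theorem \ref{theo-AR-genus1-general} and adds $2s\,x/(1-x)$ where necessary, so you have simply spelled out that sketch in full detail.
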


\bibliographystyle{alpha}
\bibliography{biblio}

\end{document}